\newtheorem{dfn}{Definition}[section]
\newtheorem{thm}[dfn]{Theorem}
\newtheorem{lem}[dfn]{Lemma}
\newtheorem{rem}[dfn]{Remark}
\newtheorem{prob}[dfn]{Problem}
\newcommand{\C}{\mathbb{C}}
\newcommand{\Z}{\mathbb{Z}}
\newcommand{\N}{\mathbb{N}}
\newcommand{\rd}{\mathrm{d}}
\newcommand{\im}{\mathrm{i}}
\newcommand{\bphi}{\bm{\varphi}}
\newcommand{\bydef}{\,\stackrel{\mbox{\tiny\textnormal{\raisebox{0ex}[0ex][0ex]{def}}}}{=}\,}
\numberwithin{equation}{section}
\newcommand{\revise}[1]{{#1}}
\begin{document}
\title{
	Computer-assisted proofs for finding the monodromy of Picard--Fuchs differential equations for a family of K3 toric hypersurfaces
}
\author{%
	Toshimasa Ishige\thanks{%
		Department of Mathematics and Informatics, Graduate School of Science, Chiba University, Chiba, 263-8522, Japan
	}\and
	Akitoshi~Takayasu\thanks{%
		Institute of Systems and Information Engineering, University of Tsukuba, 1-1-1 Tennodai, Tsukuba, Ibaraki 305-8573, Japan (\texttt{takitoshi@risk.tsukuba.ac.jp})
	}
}
\date{\today}
\maketitle
\begin{center}
	\emph{In honor of Piotr Zgliczy\'nski’s contributions on his 60th birthday.}
\end{center}
\begin{abstract}

In this paper, we present a numerical method for rigorously finding the monodromy of linear differential equations. Beginning at a base point where certain particular solutions are explicitly given by series expansions, we first compute the value of fundamental system of solutions using interval arithmetic to rigorously control truncation and rounding errors. The solutions are then analytically continued along a prescribed contour encircling the singular points of the differential equation via a rigorous integrator. From these computations, the monodromy matrices are derived, generating the monodromy group of the differential equation. This method establishes a mathematically rigorous framework for addressing the monodromy problem in differential equations. For a notable example, we apply our computer-assisted proof method to resolve the monodromy problem for a Picard--Fuchs differential equation associated with a family of K3 toric hypersurfaces.

\end{abstract}
\begin{center}
	{\bf \small Keywords.} 
	{ \small Monodromy matrices, Picard--Fuchs differential equations, Computer-assisted proofs,\\ Interval arithmetic, K3 toric hypersurfaces}
\end{center}
\section{Introduction}\label{sec:intro}


Linear differential equations in the complex domain may admit multivalued solutions. Such multivaluedness is characterized by the monodromy, which reflects the topological properties, namely the fundamental group of the domain of definition. The monodromy group describes the transformation of solutions of a differential equation under analytic continuation around the singular locus, providing a detailed characterization of the global structure and behavior of the solutions. It captures how solutions evolve going around the singular locus, offering a comprehensive view of their behavior. Furthermore, studying the monodromy group provides insights into the global solution space and its connection to underlying geometric structures.
The monodromy of linear differential equations has been a subject of mathematical exploration since the late nineteenth century (cf., e.g., \cite{Gray2008,Iwasaki1991}) and has found recent applications in mathematical physics \cite{Castro_2013}. For instance, the monodromy has been used to describe particle production in charged black holes \cite{Chen2023}.

This paper presents a general framework for computer-assisted proofs addressing monodromy problems for differential equations. Our approach is specifically applied to determine the monodromy of Picard--Fuchs differential equations associated with a family of K3 toric hypersurfaces. 

We begin by introducing the concept of monodromy for linear differential equations. More precisely, we confine our discussion to linear ordinary differential equations (ODEs) or systems of linear partial differential equations (PDEs) that can be transformed to a \emph{Pfaffian equation} of the form:
\begin{equation}\label{eq:Pfaffian-equation}
\rd {u}=\left(\sum_{k=1}^m A^k(x) \rd x_k\right) {u},
\end{equation}
where ${x}=(x_1,\dots,x_m)^\mathrm{T} \in \mathbb{C}^m$ denotes the vector of variables,  ${u}=(u_1,\dots,u_n)^\mathrm{T}\in \mathbb{C}^n$ the vector of unknown variables, and $A^k({x})=\left(a^k_{i,j}({x})\right) (k=1,\dots,m)$ coefficient matrices, with $a^k_{i,j}({x})$ assumed to be meromorphic functions on $\mathbb{C}^m$.

From the standard text book of differential equations \revise{(cf.\ e.g., \cite[Chapter 11]{Haraoka2020})}, it is well-known that the equation \eqref{eq:Pfaffian-equation} is \emph{integrable} if $\partial A^l/\partial x_k-\partial A^k/\partial x_l=A^k A^l-A^lA^k$, ($k,l=1,\dots,m$).
\revise{In the case $m=1$ in \eqref{eq:Pfaffian-equation} corresponding to ODEs, equation \eqref{eq:Pfaffian-equation} is integrable.} In what follows, we assume that \eqref{eq:Pfaffian-equation} is integrable.
A point ${x}\in \mathbb{C}^m$ is called a \emph{regular point} if all $a^k_{i,j}({x})$ are holomorphic in a neighborhood of ${x}$. Let $D$ denote the set of all regular points. 
If ${x}^0=(x^0_1,\dots,x^0_m)$ lies in~$D$, then for any ${u}^0\in \mathbb{C}^n$,  there exists a unique solution to \eqref{eq:Pfaffian-equation} that is holomorphic in a neighborhood of~${x}^0$ and satisfying ${u}({x}^0)={u}^0$.
Thus, the  set of  all solutions to \eqref{eq:Pfaffian-equation} forms an $n$-dimensional linear space, the basis of which is referred to as a \emph{fundamental system of solutions}.

A solution that is holomorphic in a neighborhood of a regular point  can be analytically continued along any curve in $D$. Consequently, the set $\mathcal{S}=\mathbb{C}^m\setminus D$, where at least one of \revise{$A^k({x})$} ($k=1,\dots,m$) has a pole, is referred to as the \emph{singular locus},  and any point in the singular locus is called a \emph{singular point}.
 
Fix $b\in D$ as a \emph{base point}. We denote by $\pi_{1}\left(D,b\right)$ the \emph{fundamental group} of $D$ with the base point~$b$. Let $[\Sigma]$ be a homotopy class of loops in $D$ under homotopy equivalence, starting and ending at $b\in D$.
The product of loops $[\Sigma_i][\Sigma_j]$ is defined as the loop going along $[\Sigma_i]$ first, followed by $[\Sigma_j]$.
Let $\Phi(x)$ be an $n\times n$ matrix whose columns form a fundamental system of solutions in a neighborhood of $b$. 
Since the initial value matrix $\Phi(b)$ uniquely determines $\Phi(x)$, we also refer to $\Phi(b)$ as  the fundamental system of solutions for the sake of simplicity. 

Let $\Sigma_\ast$ denote the analytic continuation along the loop $[\Sigma]\in\pi_{1}\left(D,b\right)$. Then $(\Sigma_\ast \Phi)(x)$ is also a fundamental system of solutions defined on a neighborhood of $b$. We abuse the notation using
$\Sigma_\ast \Phi(b)$ instead of $(\Sigma_\ast \Phi)(b)$. 
Then, there exists a nonsingular matrix $M_\Sigma\in \mathrm{GL}_n(\C)$ such that
\begin{equation}\label{eq:MonodromyDef}
\Sigma_{\ast}\Phi(b) = \Phi(b) M_\Sigma.
\end{equation}
This matrix $M_\Sigma$ is called the \emph{monodromy matrix}.
The monodromy matrix implies the existence of a map
\[
\rho:\pi_1\left(D,b\right)\to \mathrm{GL}_n(\C),\quad [\Sigma]\mapsto M_\Sigma.
\]
In particular, since $e_\ast\Phi(b)=\Phi(b)$, where $[e]$ denotes the identity element in $\pi_{1}\left(D,b\right)$, and $(\alpha\beta)_\ast \Phi(b)=\beta_\ast(\alpha_\ast\Phi(b))$ for any $[\alpha]$, $[\beta]\in \pi_{1}\left(D,b\right)$, it follows that
\begin{align}
	\rho([e])=M_e=\mathrm{Id},\quad \rho([\alpha][\beta])=M_{\alpha\beta}=M_\alpha M_\beta=\rho([\alpha])\rho([\beta]).
\end{align}
Thus, the map $\rho$ is a group homomorphism and  is referred to as the \emph{monodromy representation} of the fundamental group.
The image of the map $\rho$, which forms a subgroup of $\mathrm{GL}_n(\C)$, is called \emph{monodromy group} of the differential equation. 
Under the conjugate relation\footnote{%
	\mbox{Two subgroups $G_1$, $G_2$ of $GL_n(\C)$ are conjugate $\Leftrightarrow \exists C\in GL_n(\C)$ s.t.~ $G_2 = C^{-1}G_1C$.}%
}, any monodromy representation belongs to the same conjugacy class, which is \revise{uniquely determined by the differential equation}.
\revise{We call this conjugacy class the \emph{monodromy} of the differential equation}.

{Obtaining the monodromy group is essential for understanding the global behavior of solutions, as it characterizes how solutions transform under analytic continuation along different loops. This is known as the monodromy problem, which is defined as follows:}

\begin{prob}[{\cite[Problem~4.1.1 in Chapter~2]{Iwasaki1991}}]
For a given linear differential equation, find an explicit expression of its monodromy; or find generators of the 
monodromy group with respect to a fundamental system of solutions.
\end{prob}
%
%
\revise{Solving the monodromy problem} is difficult and \revise{no general analytical method is known}. We quote a sentence just after Remark 4.1.3 in Chapter~2 of \cite{Iwasaki1991}, \revise{which motivates the present study}.
\begin{quote} \it
Unfortunately, we know only a very restricted number of equations whose monodromy problem we can solve. To each of such equations, one applies an appropriate method, according to the property of the equation, which stems from a method used for the hypergeometric differential equation. 
\end{quote}
As an example of linear differential equations, let us consider the monodromy problem of the Picard--Fuchs differential equation for a family of toric K3 hypersurfaces\revise{\footnote{%
\revise{There are 4319 classes of 3-dimensional reflexive polytopes \cite{KreuzerSkarke1998} each giving rise to a family of toric K3 hypersurfaces along with its Picard--Fuchs differential equation and the solutions obtained by Gel'fand--Kapronov--Zelevinsky system \cite{GelfandEtAl1988,GELFAND1990255}.}}}, which has been analytically solved by one of the authors using specific properties of K3 surfaces \cite{bib:ishige}.
However, the techniques developed in \cite{bib:ishige} are not applicable to the Picard--Fuchs differential equations for families of toric Calabi--Yau hypersurfaces of dimension three\revise{\footnote{%
\revise{There are 473800776 classes of 4-dimensional reflexive polytopes \cite{KreuzerSkarke2000} each giving rise to a family of toric toric Calabi-Yau threefold hypersurfaces along with its Picard--Fuchs differential equation and the solutions obtained by Gel'fand--Kapronov-Zelevinsky system \cite{GelfandEtAl1988,GELFAND1990255}.}}~(cf.\ \cite{CoxKatz1999}, e.g.)}%
, which have attracted significant attention in relation to superstring theory. Consequently, the methodology for finding the monodromy is highly specialized and its development poses significant challenges.

On the contrary, in general, the monodromy matrices can be computed by conventional numerical methods; 
Let the contour $\Sigma$ be  expressed by a smooth map \revise{$[0,1]\ni t\mapsto x(t) \in D$} 
with \revise{ $x(0)=x(1)=b$}. For \eqref{eq:Pfaffian-equation}, define $P(t)\bydef\sum_{k=1}^m A^k(x)\,\rd x_k/\rd t$. Then, one can numerically compute the left side of \eqref{eq:MonodromyDef}  using the equations $\Sigma_\ast \Phi(b)=(\Sigma_* \mathrm{Id}) \Phi(b)$ and  
\begin{equation}\label{eq:Pexp}
	\Sigma_\ast \mathrm{Id}=\lim_{N \to \infty} \left(\mathrm{Id}+P(t_{N-1}) \Delta t \right)\dots \left(\mathrm{Id}+P(t_0) \Delta t \right)
\end{equation}
with $\Delta t=1/N$, $t_k=k/N$, where the right hand side of \eqref{eq:Pexp} is called the \emph{path ordered exponential} denoted by $\mathrm{Pexp}\left(\int_\Sigma\sum_{k=1}^m A^k(x) d x_k\right)=\mathrm{Pexp}(\int_0^1 P(t) dt)$.
This is an analog of  the Euler method commonly used for solving ODEs. 
However, the major drawback lies in the accumulated margin of error, though ``estimated'' to be of the order of $\Delta t$, becomes meaninglessly large when an explicit bound is sought.
Using a more sophisticated conventional technique, such as the fourth order Runge--Kutta method, does not alleviate this issue. 
This is another reason motivating the development of computer-assisted proof techniques for computing monodromy matrices. We believe that establishing such methods will contribute to resolving open monodromy problems for differential equations in the complex domain.
\revise{
It is also worth noting that there are several works that numerically approximating monodromy groups, such as \cite{Hofmann2013,ChenYangYuiErdenberger2008,vanenckevort2005monodromycalculatonsfourthorder}. In particular, in \cite{vanenckevort2005monodromycalculatonsfourthorder}, a Picard--Fuchs differential equation associated with  a Calabi--Yau manifold is numerically integrated along a closed loop around a singular point to identify the corresponding monodromy matrix though numerical errors are not rigorously controlled in that approach.
}

\subsection{General approach for finding the monodromy: computer-assisted proofs}\label{sec:general_approach}

Here we introduce a basic concept of our general approach for rigorously computing the monodromy matrices~$M_{\Sigma}$. 
This approach relies on the linearity of the solution space of differential equations, reducing the problem to explicitly constructing a fundamental system of solutions at the base point $p$ and determining its transformation under analytic continuation along a loop.

More precisely, our method involves four main steps.
First, using the validated numerics, we compute the value of $\Phi(x)$ at the base point $b$. 
Since the solution space of a linear differential equation can be represented entirely by its values, it is sufficient to compute the value \( \Phi(b) \) instead of handling the fundamental system of solutions $\Phi(x)$. This matrix compactly encodes all the necessary information about the solution space at~\( b \). Using techniques such as Taylor expansion or a certain series expansion with rational functions, the values of the fundamental system of solutions can be rigorously computed. 
\revise{This step utilizes interval arithmetic} (cf., e.g., \cite{MR2652784}), which ensures rigorous error bounds in numerical computations by representing each number as an interval containing all possible values due to rounding or approximation errors. 
Second, define a loop $\Sigma$ in the regular domain $D$, which encircles a singular point, say $p$, and starts and ends at the base point $b$.
This is crucial because \revise{the monodromy matrix associated with \( \Sigma \) reflects a transformation} of solutions on analytic continuation along the loop.
Third, taking into account the vector field associated with the differential equation, we rigorously compute the analytic continuation of the identity matrix $\mathrm{Id}$ along $\Sigma$ using a rigorous integrator of ODEs.
Fourth, we calculate the monodromy matrix $M_{\Sigma}$ using the conjugacy formula
\begin{align}\label{eq:MonodromyMat}
	M_{\Sigma_p}=\Phi(b)^{-1}\left((\Sigma_p)_\ast \mathrm{Id}\right)\Phi(b),
\end{align}
which explicitly encodes the transformation of the fundamental system of solutions after the analytic continuation along the loop~$\Sigma$.

The general method introduced in this paper belongs to the category of computer-assisted proofs (CAPs) via validated numerics. 
In particular, the rigorous computation of the fundamental system of solutions and the implementation of rigorous analytic continuation are central to this approach.
\begin{rem}[Software supporting interval arithmetic and rigorous integrator of ODEs]
	Over the past four decades, various software libraries and tools have been developed to support interval arithmetic, facilitating the implementation of validated numerics. Examples include INTLAB \cite{Ru99a} (a MATLAB toolbox), Arb \cite{Johansson2017} (a C library for arbitrary precision interval arithmetic), IntervalArithmetic.jl \cite{IntervalArithmetic.jl} (a Julia package for interval arithmetic),  kv \cite{kv} and CAPD \cite{KAPELA2021105578} (C++ libraries for validated numerics), all of which are widely used worldwide. These tools rigorously implement basic arithmetic operations and various mathematical functions with guaranteed error bounds, making them essential for applications in computer-assisted proofs and reliable numerical computations. Our first step in computing the fundamental system of solutions relies on these tools.	
	
	Furthermore, in the third step of our approach, rigorous analytic continuation of differential equations is performed using a rigorous integrator of ODEs, which computes solution trajectories of ODEs with guaranteed error bounds via interval arithmetic. One of the most notable achievements of rigorous integration of ODEs is Tucker's resolution of Smale's 14th problem \cite{MR1870856}. 
	For a variety of methods in rigorous integration of ODEs, we refer to Berz and Makino \cite{MR1962787}, B\"unger \cite{Bunger:2020aa}, Immler \cite{Immler:2018aa}, Kashiwagi and Oishi \cite{Kashi1}, Lessard and Reinhardt \cite{MR3148084}, Lohner \cite{Lohner}, and Zgliczy\'{n}ski \cite{MR1930946}, among others. These methods are based on fixed-point arguments, which are equivalent to demonstrating the existence of solution trajectories, and employ various interval arithmetic techniques to ensure rigorous error bounds.

	In our work, we use the kv library \cite{kv} for rigorous integration of ODEs. This integrator is based on an interval representation of the solutions' Taylor series and incorporates Affine arithmetic \cite{Rump2015}, a technique designed to mitigate the so-called wrapping effect commonly encountered in interval analysis.	
\end{rem}

\revise{
\begin{rem} [Monodromy without explicit fundamental system of solutions]
While it may be a concern that the monodromy problem cannot be solved without explicitly specifying a fundamental system of solutions, we demonstrate that this concern can be resolved.
%
In a simply connected neighborhood of the base point $b$, a given initial value $C\in \mathrm{GL}_n(\mathbb{C})$ at $b$ well defines a solution, denoted by $\Phi(C;x)$.
The monodromy matrix of $\Phi(C;x)$ along a loop $\Sigma$ can then be obtained as $C^{-1} (\Sigma_\ast \mathrm{Id}) C$, by computing the path-ordered exponential $\mathrm{Pexp}\left(\int_\Sigma\sum_{k=1}^m A^k(x) d x_k\right)$, as defined in \eqref{eq:Pexp}.
For example, the Heun equations are a candidate for application, since explicit fundamental solutions are generally unavailable.
In fact, \cite{GiscardTamar2022} employs the integral series representation of path-ordered exponentials to approximate solutions.
It is expected that the use of interval arithmetic would allow for rigorous computation of both solutions and monodromy matrices for such equations.

Furthermore, it would be worth clarifying the geometric reason why  $\Phi(C;x)$ is well defined in a simply connected neighborhood of $b$, and for that purpose, we place equation \eqref{eq:Pfaffian-equation} in the framework of fiber bundles (cf.\ \cite[Sections 9.4.1, 10.1, 10.2, and 10.3]{Nakahara2021}). 
Consider the trivial principal bundle $P(D,G)=D \times G \stackrel{\pi}{\to} D$, where $G=\mathrm{GL}_n(\mathbb{C})$, and 1-form $\mathcal{A}=-\sum_{k=1}^m A^k dx_k$ defines the connection. In fact, $P(D,G)$ turns out to be a flat $G$-bundle because the curvature 2-form $\mathcal{F}=d \mathcal{A} + \mathcal{A} \wedge \mathcal{A}$ vanishes everywhere on $D$, due to the integrability condition.
In this case, it is shown \cite[Section 6.6 (c)]{Morita2001} that the holonomy along the loop $\Sigma$ does not change by a deformation of $\Sigma$, and therefore, the holonomy homomorphism $\rho_0: \pi_1(D,b) \to G$ is well defined by $\rho_0([\Sigma])= P\exp{ \int_ \Sigma (-\mathcal{A})}$ which is the holonomy along $\Sigma$. In particular, the restricted holonomy group becomes trivial, and as a result, as well as  $\Phi(\mathrm{Id};x)= P\exp{ \int_ \gamma (-\mathcal{A})}$,  $\Phi(C;x)=\Phi(\mathrm{Id};x)C$ is consequently well defined irrespective of the choice of the curve $\gamma$ connecting $b$ to $x$  in a simply connected neighborhood of $b$.

\end{rem}
}

In the followings, we mainly focus on how to solve the monodromy problem using the validated numerics for the case of the Picard--Fuchs differential equation for a family of K3 surfaces, which is a good example of complicated monodromy problems.
In particular, such differential equation has monodromy matrices with integer entries if we adopt an appropriate fundamental system of solutions, which will be discussed in the next section.
This fact yields that, to complete the proof of finding the monodromy matrix, it is sufficient to prove the target margin of e rror is less than 0.5.
However, it is worth mentioning \revise{that the framework provided in this paper} has a potential of computer-assisted proofs in addressing general monodromy problems.

\subsection{Picard--Fuchs differential equation for a family of K3 toric hypersurfaces}\label{Sct:K3}

In this paper, we use validated numerics to compute the monodromy matrices of the Picard--Fuchs differential equation, a system of linear differential equations associated with a two-parameter family of K3 surfaces \revise{arising from a reflexive polytope.
As for preceding works, Nagano \cite{NAGANO2012} derived the Picard--Fuchs differential equations for  two-parameter families of K3 surfaces arising from several reflexive polytopes, although fundamental solutions and monodromy were not obtained in his work.
Narumiya and Shiga \cite{NarumiyaShiga2001} by deformation of 2-cycles derived the integer monodromy matrices of the Picard--Fuchs differential equation for a one-parameter family of  K3 surfaces arising from a reflexive polytope, but they did not provide the corresponding fundamental solutions. In \cite{bib:ishige}, one of the authors solved the monodromy problem completely for the Picard--Fuchs differential equation associated with K3 toric hypersurfaces.
The present paper can be regarded as providing a computer-assisted proof of the results obtained in \cite{bib:ishige}, where the monodromy problem was addressed without relying on validated numerics.}%

A K3 surface is by definition a simply connected compact complex surface with a unique non-vanishing 2-form. It is named after the three mathematicians, K\"{a}hler, Kummer and Kodaira, and with its rich structure, has been studied vigorously not only in mathematics but also in physics in connection with the string theory. \revise{We refer the reader to \cite{Aspinwall1997} for an introduction to K3 surfaces and their applications to string theory.} Since the K3 surfaces are of complex dimension 2, they are of real dimension 4. 
It is well-known in standard texts on algebraic geometry and topology that all the loops which are 1-cycles on a K3 surface $Y$ can continuously shrink to a point, since $Y$ is simply connected. Thus, the first homology group of the K3 surface is trivial.

On the other hand, the second homology group is generated by 22 homology equivalence classes of  2-cycles which are by definition formal linear $\mathbb{Z}$-sums of closed oriented manifolds of real dimension~2 without boundary.
Two 2-cycles, say $\Gamma$ and $\Gamma^\prime$, are homologous if  $\Gamma-\Gamma^\prime$ is equal to the boundary $\partial M$ of an oriented submanifold $M$ of real dimension 3 in $Y$, where $-\Gamma^\prime$ is obtained by reversing the orientation of $\Gamma^\prime$. A homology equivalence class denoted by $[\Gamma]$ is the set of  2-cycles which are homologous to $\Gamma$. Let $\omega$ be the unique non-vanishing holomorphic 2-form  of  $Y$.  We have $\rd \omega =0$. Due to the Stokes theorem, 
\[  
\int_\Gamma \omega - \int_{\Gamma^\prime}  \omega =\int_{\Gamma-\Gamma^\prime} \omega =\int_{\partial M} \omega
=\int _M \rd \omega=0. 
\]
Thus, $\int_{[\Gamma]} \omega$ is well defined by $\int_\Gamma \omega$.

Let the intersection number of any 2-cycles, $\Gamma$ and $\Gamma^\prime$,  be denoted by $\Gamma \cdot \Gamma^\prime$. This intersection number  is an integer, independent of the choice of 2-cycles within their respective homology equivalence classes,
and $[\Gamma]\cdot[\Gamma^\prime]$ is well defined by  $\Gamma \cdot \Gamma^\prime$.
Therefore, we use the same notation $\Gamma$  to represent both the 2-cycle and its corresponding homology class for simplicity.

For 2-cycles we have  $\Gamma \cdot \Gamma^\prime=\Gamma^\prime \cdot \Gamma$. The self-intersection number $\Gamma \cdot \Gamma$ is determined by deforming $\Gamma$ to another 2-cycle homologous to $\Gamma$ and counting the intersection points with $\Gamma$.

Consider a two-parameter family of a lattice polarized K3 surfaces $Y_{x,y}$ defined by
\begin{align}
\left\{(X, Y, Z) \in \mathbb{C}^{3} : f(X, Y, Z ; x, y)=0\right\},
\end{align}
where
\[
\begin{aligned}
	f(X, Y, Z ; x, y) \bydef X Y Z(X+Y+Z+1)+\lambda(x, y) X Y+\mu(x, y),\quad
	\lambda(x, y) \bydef x / y+1 / 4, \quad \mu(x, y)=x^{3} / y^{2}.
\end{aligned}
\]
The unique non-vanishing holomorphic 2-forms is given by
\[
\omega_{x, y}=\frac{\rd X \wedge \rd Y}{f_{Z}(X, Y, Z ; x, y)}=\frac{\rd Y \wedge \rd Z}{f_{X}(X, Y, Z ; x, y)}=\frac{\rd Z \wedge \rd X}{f_{Y}(X, Y, Z ; x, y)}.
\]


Let $\mathrm{H}_2(Y_{x,y},\mathbb{Z})$ denote the second homology group of the K3 surface.
For $Y_{x,y}$, we can choose $Z$-linearly independent $\Gamma_1,\dots,\Gamma_{22}$ in 
$\mathrm{H}_2(Y_{x,y},\mathbb{Z})$ such that
\begin{equation*}
	\int_{\Gamma_i} \omega_{x,y}
	\begin{cases}
		&\neq 0 \quad (i=1,\dots,4)\\
		&=0 \quad (i=5,\dots,22)
	\end{cases}
\end{equation*}
and 
\begin{equation*}
	N\bydef(\Gamma_i \cdot \Gamma_j )_{i,j=1,\dots,4}=
	\begin{pmatrix} 0 & 1 & 0 & 0 \\ 1& 0 & 0 & 0 \\ 0 & 0 & -2 & 0 \\ 0 & 0 & 0 & 4 \\ \end{pmatrix}, \quad
	\Gamma_i \cdot \Gamma_j=0 \quad \text{if} \quad 1 \le i \le 4 <  j  \le 22.
\end{equation*}
Equipped with the intersection, the second homology group $\mathrm{H}_2(Y_{x,y},\mathbb{Z})$ becomes a lattice. The sublattice generated by $\Gamma_1,\Gamma_2,\Gamma_3,\Gamma_4$ is called
the transcendental lattice also denoted by $N$.

Additionally, the K3 surface $Y_{x,y}$ is characterized by the periods, which is given by
\begin{equation}\label{eq:IntegrationRepresentation}
	\begin{split}
		(\varphi_1,\varphi_2, \varphi_3, \varphi_4)&=(\int_{\Gamma_1} \omega_{x,y}, \int_{\Gamma_2} \omega_{x,y},\int_{\Gamma_3} \omega_{x,y},  \int_{\Gamma_4} \omega_{x,y}) N^{-1}                   \\
		&=(\int_{\Gamma_2} \omega_{x,y}, \int_{\Gamma_1} \omega_{x,y},-\frac{1}{2}\int_{\Gamma_2} \omega_{x,y},  \frac{1}{4}\int_{\Gamma_4} \omega_{x,y}).
	\end{split}
\end{equation}
The differential equation is called the Picard--Fuchs differential equation for the family of lattice polarized K3 surfaces if their periods constitute a basis of  the solution space. Then, the Picard--Fuchs differential equation for  the family of $Y_{x,y}$ is given by  
\begin{equation}\label{eq:PicardFuchs}
	\begin{cases}
		\varphi_{xx} = \ell\varphi_{xy} + a\varphi_{x} + b \varphi_{y} + p\varphi\\
		\varphi_{yy} = m\varphi_{xy} + c\varphi_{x} + d\varphi_{y} + q\varphi\,,
	\end{cases}
\end{equation}
where $\varphi:\C^2\to \C$ is an unknown function and variable coefficients are defined by
\begin{equation}\label{eq:var_coeffs}
	\begin{aligned}
		h(x,y)&\bydef 1+20x+9y\\
		\ell (x,y)&\bydef -(8x+32x^2+4y+84xy+27y^2)/(2xh)\\
		a(x,y)&\bydef (4x+16x^2-3y-60xy-27y^2)/(2xyh)\\
		b(x,y)&\bydef -(16x+96x^2+4y+168xy+27y^2)/(4x^2h)\\
		p(x,y)&\bydef (2+12x+9y)/(xyh)\\
		m(x,y)&\bydef -(8x+32x^2+y+24xy)/(4yh)\\
		c(x,y)&\bydef x(1+4x)/(y^2h)\\
		d(x,y)&\bydef -(12x+16x^2+y+72xy)/(8xyh)\\
		q(x,y)&\bydef (1-8x)/(2y^2h).
	\end{aligned}
\end{equation}

In other words, $(\varphi_1,\varphi_2,\varphi_3,\varphi_4)$ in \eqref{eq:IntegrationRepresentation} constitute a basis of the solution space of the equation \eqref{eq:PicardFuchs}, which we call a fundamental system of solutions.
The monodromy  arises from the deformation of these 2-cycles, which serve as the domains of integration in \eqref{eq:IntegrationRepresentation}. In particular, with analytic continuation along a loop  $\Sigma$ based at $p_0=(x_0,y_0)$,  the 2-cycles $(\Gamma_1, \Gamma_2, \Gamma_3, \Gamma_4)$ are deformed to $(\Gamma_{1}^{\prime}, \Gamma_{2}^{\prime}, \Gamma_{3}^{\prime}, \Gamma_{4}^{\prime})$ in $Y_{x_0,y_0}$, resulting in a corresponding change in the periods from $(\varphi_1, \varphi_2,\varphi_3, \varphi_4)$ to $(\varphi_1^\prime,\varphi_2^\prime,\varphi_3^\prime , \varphi_4^\prime)$.  
Since $(\Gamma_1, \Gamma_2, \Gamma_3, \Gamma_4)$ is  a basis of the transcendental lattice $N$, there exists $P \in \mathrm{GL}_4(\Z)$ such that 
\[
(\Gamma_{1}^{\prime}, \Gamma_{2}^{\prime}, \Gamma_{3}^{\prime}, \Gamma_{4}^{\prime})=(\Gamma_1, \Gamma_2, \Gamma_3, \Gamma_4) P, \quad (\Gamma_1, \Gamma_2, \Gamma_3, \Gamma_4)=(\Gamma_{1}^{\prime}, \Gamma_{2}^{\prime}, \Gamma_{3}^{\prime}, \Gamma_{4}^{\prime}) P^{-1}.
\]
Note that $(\Gamma_{1}^{\prime}, \Gamma_{2}^{\prime}, \Gamma_{3}^{\prime}, \Gamma_{4}^{\prime})$ also forms a basis of $N$, and  $P^{-1}$ belongs to $\mathrm{GL}_4(\mathbb{Z})$. Since the intersection matrix $N$ remains invariant under the analytic continuation, we obtain the following relation:
\begin{equation}\label{eq:Isometry}
	(\Gamma_{1}^{\prime}, \Gamma_{2}^{\prime}, \Gamma_{3}^{\prime}, \Gamma_{4}^{\prime})^T\cdot (\Gamma_{1}^{\prime}, \Gamma_{2}^{\prime}, \Gamma_{3}^{\prime}, \Gamma_{4}^{\prime})=P^T (\Gamma_1, \Gamma_2, \Gamma_3, \Gamma_4) ^T\cdot (\Gamma_1, \Gamma_2, \Gamma_3, \Gamma_4) P=P^T N P =N.
\end{equation}  

From the integration representation \eqref{eq:IntegrationRepresentation}, the relation $(\varphi_1^\prime, \varphi_2^\prime, \varphi_3^\prime,\varphi_4^\prime)=(\varphi_1,\varphi_2,\varphi_3,\varphi_4) M_\Sigma$ implies
\begin{equation*}
	\begin{split}
		& (\Gamma_{1}^{\prime}, \Gamma_{2}^{\prime}, \Gamma_{3}^{\prime}, \Gamma_{4}^{\prime}) N^{-1}
		=(\Gamma_1, \Gamma_2, \Gamma_3, \Gamma_4) N^{-1} M_\Sigma \\
		\Longrightarrow &
		(\Gamma_{1}, \Gamma_{2}, \Gamma_{3}, \Gamma_{4}) P N^{-1} = 
		(\Gamma_1, \Gamma_2, \Gamma_3, \Gamma_4) N^{-1} M_\Sigma  \\
		\Longrightarrow & PN^{-1}=N^{-1}M_\Sigma    \\
		\Longrightarrow & M_\Sigma=N P N^{-1} =(P^T)^{-1}
	\end{split}
\end{equation*}
where the last equality is from \eqref{eq:Isometry}.

Therefore, $M_{\Sigma}$ becomes a unimodular matrix from the fact $(P^T)^{-1}$ is  in $\mathrm{GL}_4(\Z)$. Consequently, our target monodromy matrix satisfies the property 
\begin{align}\label{eq:theMonodromyMatrix}
{M_{\Sigma}\in \rm{GL}_4(\Z)}.
\end{align}

The calculation of $M_\Sigma$ by explicitly constructing concrete $\Gamma_1, \Gamma_2, \Gamma_3, \Gamma_4$ and deforming them along a loop $\Sigma$ requires an enormous effort (cf.\ \cite[Sections 4--5]{bib:ishige}). Alternatively, we employ validated numerics to achieve the computation of $M_\Sigma$ rigorously and efficiently.

\subsection{Main result and organization of the paper}
The unimodularity of our target monodromy matrix, discussed in the previous section, and rigorous inclusion of the monodromy matrix of the form \eqref{eq:MonodromyMat} provides our main theorem of this paper as follows:
\begin{thm}\label{thm-main-result}
	Consider the Picard--Fuchs differential equation of the form \eqref{eq:PicardFuchs}. 
	The monodromy matrices of \eqref{eq:PicardFuchs} along with $\Sigma_{p_i}$ from the base point $p_0$ are given by
	\begin{equation}\label{eq:ResultingMonodromy}
	\begin{aligned}
		M_{\Sigma_1}&= \begin{pmatrix}
			-1 & -2 & -2 & -1\\ 0 & -1 & 0 & 0\\ 0 & 4 & 3 & 2\\ 0 & -4 & -4 & -3
		\end{pmatrix},&
		M_{\Sigma_2} &= \begin{pmatrix}
			-1 & 0 & 0 & 0\\ 0 & -1 & 0 & 0\\ 0 & 0 & 3 & 2\\ 0 & 0 & -4 & -3
		\end{pmatrix},&
		M_{\Sigma_3} &= \begin{pmatrix}
			1 & 0 & 0 & 0\\ 0 & 1 & 0 & 0\\ 0 & 0 & -1 & 0\\ 0 & 0 & 0 & 1
		\end{pmatrix}\\
		M_{\Sigma_4} &= \begin{pmatrix}
			1 & 1 & -1 & 0\\ 0 & 1 & 0 & 0\\ 0 & 2 & -1 & 0\\ 0 & 0 & 0 & 1
		\end{pmatrix},&
		M_{\Sigma_5} &= \begin{pmatrix}
			0 & 1 & 0 & 0\\ 1 & 0 & 0 & 0\\ 0 & 0 & 1 & 0\\ 0 & 0 & 0 & 1
		\end{pmatrix},&
		M_{\Sigma_6} &= \begin{pmatrix}
			0 & 1 & 0 & 0\\ 1 & 0 & 0 & 0\\ 0 & 0 & 1 & 0\\ 0 & 0 & 0 & 1
		\end{pmatrix}.
	\end{aligned}
	\end{equation}

\end{thm}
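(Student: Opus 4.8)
The plan is to realize the four-step scheme of Section~\ref{sec:general_approach} concretely for the Picard--Fuchs system \eqref{eq:PicardFuchs}, and then invoke the unimodularity \eqref{eq:theMonodromyMatrix} to upgrade the validated numerical enclosures to exact integer matrices. First I would recast \eqref{eq:PicardFuchs} as a rank-$4$ Pfaffian system \eqref{eq:Pfaffian-equation} in the unknown vector $u=(\varphi,\varphi_x,\varphi_y,\varphi_{xy})^\mathrm{T}$, extracting the two coefficient matrices $A^1,A^2$ from \eqref{eq:var_coeffs}; the singular locus $\mathcal S$ is read off as the union of $\{x=0\}$, $\{y=0\}$, $\{h(x,y)=1+20x+9y=0\}$ together with the additional components coming from the resultant-type denominators, and I would fix the base point $p_0=(x_0,y_0)\in D$ used in \cite{bib:ishige}. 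At $p_0$ one has explicit series expansions (Taylor or the rational-function series alluded to in Section~\ref{sec:general_approach}) for a distinguished set of solutions whose initial value matrix is, up to the normalization in \eqref{eq:IntegrationRepresentation}, the fundamental system $\Phi(p_0)$; Step~1 is to evaluate $\Phi(p_0)$ and $\Phi(p_0)^{-1}$ with rigorous interval bounds, controlling truncation of the series and rounding via interval arithmetic (INTLAB/Arb/kv).

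For Step~2 I would fix, for each $i=1,\dots,6$, an explicit piecewise-smooth loop $\Sigma_{p_i}\subset D$ based at $p_0$ that encircles the corresponding singular component $p_i$ (and only that one), parametrized by $t\in[0,1]\mapsto (x(t),y(t))$; a polygonal path with rational vertices keeping a verified positive distance from $\mathcal S$ is convenient. Step~3 is the rigorous analytic continuation: along each segment I form $P(t)=\sum_k A^k(x(t))\,\dot x_k(t)$ and integrate the matrix ODE $\dot\Psi=P(t)\Psi$, $\Psi(0)=\mathrm{Id}$, with the kv library's Taylor-model/affine-arithmetic integrator, obtaining an interval enclosure of $(\Sigma_{p_i})_\ast\mathrm{Id}=\Psi(1)$; care is needed to verify a priori that the whole integration stays inside $D$ so that the coefficients remain holomorphic. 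Step~4 applies the conjugacy formula \eqref{eq:MonodromyMat}, $M_{\Sigma_{p_i}}=\Phi(p_0)^{-1}\bigl((\Sigma_{p_i})_\ast\mathrm{Id}\bigr)\Phi(p_0)$, in interval arithmetic, producing a $4\times 4$ interval matrix enclosing $M_{\Sigma_{p_i}}$.

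The final step is the rounding argument: by \eqref{eq:theMonodromyMatrix} each true monodromy matrix has integer entries, so if every entry of the computed interval matrix has width strictly less than $1$ and contains an integer, that integer is forced, and one checks it equals the claimed matrix in \eqref{eq:ResultingMonodromy}. It is also worth verifying $P=(M_{\Sigma_{p_i}}^T)^{-1}$ satisfies $P^TNP=N$ with $N$ the intersection matrix, as a consistency check via \eqref{eq:Isometry}. The main obstacle I anticipate is not the algebra but the numerical analysis of Step~3: keeping the accumulated interval enclosure of $\Psi(1)$ narrow enough (total entrywise width well below $1$ after conjugation by the possibly ill-conditioned $\Phi(p_0)$) over a loop that must pass near several singular components, which forces careful choice of the contour geometry, adaptive step sizes, sufficiently high Taylor order, and affine-arithmetic bookkeeping to defeat the wrapping effect; a secondary difficulty is obtaining a tight rigorous enclosure of $\Phi(p_0)$ and $\Phi(p_0)^{-1}$ from the series data, especially if the base point is close to a singular locus or the fundamental matrix is poorly conditioned.
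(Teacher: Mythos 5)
Your proposal follows essentially the same route as the paper: explicit series evaluation of $\Phi(p_0)$ with rigorous truncation bounds, rigorous integration of the Pfaffian system along loops about each $p_i$ (the paper restricts to a generic line via Zariski--van Kampen and uses circular-arc contours rather than polygons), the conjugacy formula \eqref{eq:MonodromyMat}, and the integrality/unimodularity argument to force the exact matrices once every interval entry has radius below $1/2$. The wrapping-effect obstacle you anticipate is precisely what the paper encounters, and it is overcome there by redoing the kv integration in double-double precision, which shrinks the enclosures enough for the rounding step to close.
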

Since $M_{\Sigma_i} (i=1,\dots,6)$ generate the monodromy group for the Picard--Fuchs differential equation \eqref {eq:PicardFuchs},  the monodromy problem for \eqref {eq:PicardFuchs} is rigorously resolved through computer-assisted proofs based on the results of Theorem \ref{thm-main-result}.

\begin{rem}
	While this form of monodromy matrices is already provided in \cite{bib:ishige} using specific properties of K3 surfaces, we emphasize that our general computer-assisted approach demonstrates the potential of addressing more complicated monodromy problems through validated numerics.
\end{rem}

\vspace*{2mm}

The rest of the present paper is organized as follows: In Section~\ref{sec:setup}, we introduce the Pfaffian equation of the Picard--Fuchs differential equation, detailing its formulation and the associated singular points. We also define the analytic continuation paths and present the fundamental system of solutions, which is central to constructing the monodromy matrices.
Section~\ref{sec:rigor} describes the rigorous numerical methods used to compute the fundamental system of solutions of the Picard--Fuchs differential equation. It provides details on the truncation error bounds and interval arithmetic techniques used to ensure the rigorous inclusion of the monodromy matrices.
Section~\ref{sec:results} presents the results of the rigorous computation of the monodromy matrices for the Picard--Fuchs differential equation, including the validation of their accuracy and the resolution of the monodromy problem.

\section{Setting up the problem}\label{sec:setup}
To compute the monodromy of \eqref{eq:PicardFuchs} using numerical methods within the framework of computer-assisted proofs, we consider the Pfaffian equation for $\bphi = (\varphi, \varphi_x, \varphi_y, \varphi_{xy})^T$ derived from the Picard--Fuchs differential equation \eqref{eq:PicardFuchs}. This form is given by
%
\begin{equation}\label{eq:diff_eq}
	\rd\bm{\bphi} = (A(x,y) \rd x + B(x,y) \rd y)\bphi,
\end{equation}
where the variable coefficients $A(x,y)$ and $B(x,y)$ are defined as
\begin{align*}
A(x,y)\bydef\begin{pmatrix}
0& 1& 0& 0\\
p& a& b& \ell\\
0& 0& 0& 1\\
B_0& B_1& B_2& B_3
\end{pmatrix},\quad
B(x,y)\bydef\begin{pmatrix}
0& 0& 1& 0\\
0& 0& 0& 1\\ 
q& c& d& m\\
C_0& C_1& C_2& C_3
\end{pmatrix}.
\end{align*}
Each element of the above matrices is defined by the following complex-valued functions defined in \eqref{eq:var_coeffs}:
\begin{align*}
B_0(x,y)&\bydef (p_y+bq+\ell (q_x+cp))/\kappa, &&C_0(x,y)\bydef (q_x+cp+m(p_y+bq))/\kappa\\
B_1(x,y)&\bydef (a_y+bc+\ell (c_x+ca)+\ell q)/\kappa,&&C_1(x,y)\bydef (c_x+ac+m(a_y+bc)+q)/\kappa\\
B_2(x,y)&\bydef (b_y+bd+\ell (d_x+bc)+p)/\kappa,&&C_2(x,y)\bydef (d_x+bc+m(b_y+bd)+mp)/\kappa\\
B_3(x,y)&\bydef (\ell _y+a+bm+\ell (m_x+d+c\ell ))/\kappa,&&C_3(x,y)\bydef (m_x+d+c\ell +m(\ell _y+a+bm))/\kappa,
\end{align*}
where  $\kappa\equiv\kappa(x,y) \bydef 1-\ell(x,y) m(x,y)$ and the subscripts denote the partial derivatives in each variable.
We note that the monodromy of \eqref{eq:diff_eq} is the equivalent to that of \eqref{eq:PicardFuchs}.

\begin{rem}
The Pfaffian equation \eqref{eq:diff_eq} holds $-A_y + B_x - (AB-BA) = 0$ for any $x,y\in \C$.
\end{rem}

Now let us consider a variable transformation between $(x,y)$ and $(\lambda,\mu)$ such that
\begin{equation}\label{eq:def_transformc}
	\lambda=\frac{x}{y}+\frac{1}{4},\quad\mu=\frac{x^3}{y^2}\iff x=\frac{\mu}{(\lambda-\frac{1}{4})^2},\quad y=\frac{\mu}{(\lambda-\frac{1}{4})^3}.
\end{equation}
Let $(\lambda_0,\mu_0)\bydef(2^{-10},2^{-10})\in\C^2$. We set a base point $p_0$ as 
\begin{align}\label{eq:base_pt}
	(x_0,y_0)\bydef\left(\mu_0\left(\lambda_0-\frac{1}{4}\right)^{-2},\ \mu_0\left(\lambda_0-\frac{1}{4}\right)^{-3}\right)\approx(0.0157478,-0.0632382),
\end{align}
\revise{which is inside of the convergence region of particular solutions to \eqref{eq:PicardFuchs} defined in Section \ref{sec:fundamental_sol}. This specific choice also allows us to control the truncation errors of the particular solutions, which is derived in Section~\ref{sec:rigor}.}

Next, we consider the singular locus\footnote{A set of singular points of \eqref{eq:PicardFuchs} are called the \emph{singular locus} as introduced in Section~\ref{sec:intro}.} of \eqref{eq:PicardFuchs}.
The singular locus is denoted by
\begin{align}\label{eq:singula-locus}
\mathcal{S}\bydef \left\{ (x,y)\in\C^2 : xy(4x+y)\left[\left(36x+\frac{27}{2}y+1\right)^2-(1-12x)^3\right]=0\right\}.
\end{align}

\begin{figure}[htbp]
	\centering
	\includegraphics[width=9.5cm]{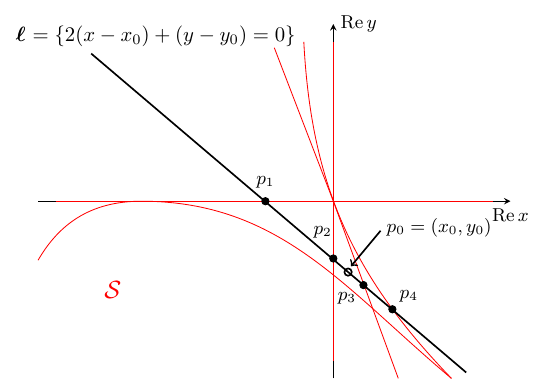}
	\caption{Singular locus $\mathcal{S}$ of \eqref{eq:PicardFuchs} (red lines) and the generic line $\bm{\ell}$ (black thick line): Four singular points $p_i$ ($i=1,\dots,4$) appear on $\bm{\ell}$, but two more points $p_5$ and $p_6$ cannot be seen in this picture because these have imaginary part. We also take the base point $p_0$ in \eqref{eq:base_pt} on the line $\bm{\ell}$.}\label{fig:Singular}
\end{figure}

In the followings, thanks to the Zariski--van Kampen theorem \cite{Shimada2003,ShimadaLectures}, without loss of generality we restrict our discussions on the generic line of $\C^2$, which is isomorphic to a complex plane $\C$, such that
\begin{equation}\label{eq:C2plane}
	\bm{\ell}\bydef\left\{(x,y)\in\C^2 : 2(x-x_0) + (y-y_0) = 0\right\}.
\end{equation}
Taking intersection of  $\mathcal{S}$ and $\bm{\ell}$, we have six singular points $p_i$ ($i=1,\dots,6$) such that
\begin{equation}\label{eq:sing_pts_approx}
\begin{aligned}
	p_1 &\approx (-0.0158713,0)\\
	p_2 &\approx (0,-0.0317426)\\
	p_3 &\approx (0.0158713,-0.0634852)\\
	p_4 &\approx (0.0164304,-0.0646034)\\
	p_5 &\approx (0.0933473+0.122495\im,-0.218437-0.24499\im)\\
	p_6 &\approx (0.0933473-0.122495\im,-0.218437+0.24499\im),
\end{aligned}
\end{equation}
where $\im=\sqrt{-1}$ is the imaginary unit. 
\revise{We note that rigorous enclosures for the values $p_1,\dots,p_6$ will later be obtained in Section \ref{sec:singular_points}.}
Figure \ref{fig:Singular} briefly displays a geometrical picture of our setting.

\subsection{Select a path of the contour}\label{sec:path_contour}
We set the contour of analytic continuation to obtain the monodromy. More precisely, we determine six paths $\Sigma_{i}$ ($i=1,\dots,6$)  of the contour from the base point $p_0=(x_0,y_0)$, which correspond to each singular point, and then derive the ODEs to solve. Let $x_i$ and $y_i$ denote the $x$- and $y$-component of the singular points $p_i$ ($i=1,\dots,6$), respectively. We also remark that each element is on the generic line $\bm{\ell}$ defined in \eqref{eq:C2plane}.

\begin{figure}[htbp]
	\centering
	\includegraphics[width=10cm]{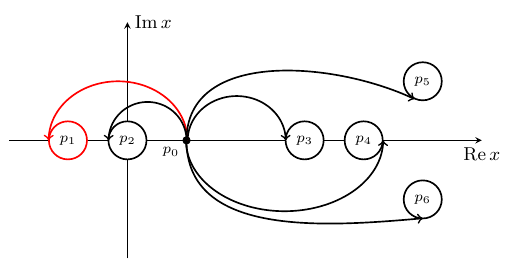}
	\caption{A brief sketch of each path $\Sigma_{i}$ ($i=1,\dots,6$) of the contour: From the base point $p_0$, the solution of \eqref{eq:diff_eq} is analytically continued to the neighborhood of each singular point via rigorous integration of ODEs. On each path, looping around the singular point, we get back to the base point.}
\end{figure}

The Zariski--van Kampen theorem \cite{Shimada2003,ShimadaLectures} implies that $\Sigma_i$ ($i=1,\dots,6$) generate $\pi_1(\mathbb{C}^2\setminus\mathcal{S},b)$ and  $M_i$ ($i=1,\dots,6$) for $\Sigma_i$ ($i=1,\dots,6$) consequently generate the monodromy group for \eqref{eq:PicardFuchs}.

\paragraph{The path \boldmath $\Sigma_1$\boldmath.}
We set a contour which loops enclosing the singular point $p_1$ from the base point $p_0$.
Let $r_1\bydef |x_1 - x_2|/2>0$.
We divide the path $\Sigma_1$ into three segments:
\begin{enumerate}
	\item[\boldmath$\Sigma_{1,1}$:]a counterclockwise semicircle centered at $c_1=(x_{c_1},y_{c_1})$ with the radius $r_{c_1}$ defined by
	\[
		x_{c_1} \bydef \frac{x_1-r_1+x_0}{2},\quad y_{c_1} \bydef \frac{y_1+2r_1+y_0}{2},\quad r_{c_1} \bydef \frac{|x_1-r_1-x_0|}{2}.
	\]
	This path is parameterized by the variable $t\in [0,1]$ defined as
	\[
	\begin{bmatrix}
	x_{1,1}(t)\\y_{1,1}(t)
	\end{bmatrix}\bydef\begin{bmatrix}
	x_{c_1} + r_{c_1} e^{\im\pi t}\\
	y_{c_1} - 2r_{c_1} e^{\im\pi t}
	\end{bmatrix}.
	\]
	\item[\boldmath$\Sigma_{1,2}$:]a counterclockwise circle centered at $p_1$ with the radius $r_1$. This path is defined by
	\[
	\begin{bmatrix}
	x_{1,2}(t)\\y_{1,2}(t)
	\end{bmatrix}\bydef\begin{bmatrix}
	x_1 + r_1 e^{\im\pi t}\\
	y_1 - 2r_1 e^{\im\pi t}
	\end{bmatrix},\quad t\in [-1,1].
	\]
	\item[\boldmath$\Sigma_{1,3}$:] a clockwise semicircle centered at $c_1$ with the radius $r_{c_1}$, which is the backward path of $\Sigma_{1,1}$. The path $\Sigma_{1,3}$ is defined by
	\[
	\begin{bmatrix}
		x_{1,3}(t)\\y_{1,3}(t)
	\end{bmatrix}\bydef\begin{bmatrix}
		x_{c_1} + r_{c_1} e^{-\im\pi t}\\
		y_{c_1} - 2r_{c_1} e^{-\im\pi t}
	\end{bmatrix},\quad t\in [-1,0].
	\]
\end{enumerate}
From the Pfaffian equation \eqref{eq:diff_eq}, to analytically continue the solution, we solve the initial value problems on each path $\Sigma_{1,j}$ ($j=1,2,3$).
\begin{align}\label{eq:ODEs_on_sigma1}
	\frac{\rd\bphi_{1,j}^{k}}{\rd t} = \left(A(x_{1,j}(t),y_{1,j}(t)) \frac{\rd x_{1,j}}{\rd t} + B(x_{1,j}(t),y_{1,j}(t)) \frac{\rd y_{1,j}}{\rd t}\right)\bphi_{1,j}^{k}
\end{align}
with the initial condition for $k=1,\dots,4$ given by
\[
	\bphi_{1,1}^{k}(0) = \bm{e}_k,\quad
	\bphi_{1,2}^{k}(-1) = \bphi_{1,1}^{k}(1),\quad
	\bphi_{1,3}^{k}(-1) = \bphi_{1,2}^{k}(1),
\]
where $\bm{e}_k$ is a canonical basis of vectors in $\C^4$. Therefore, the analytic continuation along the path $\Sigma_1=\bigcup_{j=1,2,3} \Sigma_{1,j}$ is given by
\begin{align}\label{eq:sigI_1}
(\Sigma_1)_\ast\, \mathrm{Id}=\left(\bphi_{1,3}^{1}(0),\bphi_{1,3}^{2}(0),\bphi_{1,3}^{3}(0),\bphi_{1,3}^{4}(0)\right)\in\mathrm{GL}_4(\C).
\end{align}

\begin{rem}
	It is worth noting that there is a natural idea to solve \eqref{eq:ODEs_on_sigma1} simultaneously as a boundary value problem, that is, we solve for $k=1,\dots,4$ 
	\begin{align}
		\begin{cases}
			\frac{\rd\bphi_{1,1}^{k}}{\rd t} = \left(A(x_{1,1}(t),y_{1,1}(t)) \frac{\rd x_{1,1}}{\rd t} + B(x_{1,1}(t),y_{1,1}(t)) \frac{\rd y_{1,1}}{\rd t}\right)\bphi_{1,1}^{k}, &\bphi_{1,1}^{k}(0) = \bm{e}_k\\
			\frac{\rd\bphi_{1,2}^{k}}{\rd t} = \left(A(x_{1,2}(t),y_{1,2}(t)) \frac{\rd x_{1,2}}{\rd t} + B(x_{1,2}(t),y_{1,2}(t)) \frac{\rd y_{1,2}}{\rd t}\right)\bphi_{1,2}^{k}, &\bphi_{1,2}^{k}(-1) = \bphi_{1,1}^{k}(1)\\
			\frac{\rd\bphi_{1,3}^{k}}{\rd t} = \left(A(x_{1,3}(t),y_{1,3}(t)) \frac{\rd x_{1,3}}{\rd t} + B(x_{1,3}(t),y_{1,3}(t)) \frac{\rd y_{1,3}}{\rd t}\right)\bphi_{1,3}^{k}, &\bphi_{1,3}^{k}(-1) = \bphi_{1,2}^{k}(1).
		\end{cases}
	\end{align}
	\revise{Such boundary value problem approach could potentially reduce numerical errors and control the wrapping effect by validating the entire path globally rather than validating locally in each path of contour. Investigating the feasibility and advantages of such an approach is an interesting direction for future work.}
	On the other hand, since our implementation depends on rigorous integrator of initial value problems, we solve \eqref{eq:ODEs_on_sigma1} by the step-by-step procedure.
\end{rem}

\paragraph{The path \boldmath $\Sigma_2$\boldmath.}
This path is a contour which loops around the singular point $p_2$ starting and ending at the base point $p_0$.
Let $r_2 = |x_0|$. This path is parameterized by
\[
\begin{bmatrix}
x_{2}(t)\\y_{2}(t)
\end{bmatrix}\bydef\begin{bmatrix}
x_2 + r_{2} e^{\im\pi t}\\
y_2 - 2r_{2} e^{\im\pi t}
\end{bmatrix},\quad t\in [0,2].
\]
From the Pfaffian equation \eqref{eq:diff_eq}, we solve the initial value problem to analytically continue the solution on $\Sigma_{2}$.
\[
\frac{\rd\bphi_{2}^{k}}{\rd t} = \left(A(x_{2}(t),y_{2}(t)) \frac{\rd x_{2}}{\rd t} + B(x_{2}(t),y_{2}(t)) \frac{\rd y_{2}}{\rd t}\right)\bphi_{2}^{k}
\]
with the initial condition $\bphi_{2}^{k}(0) = \bm{e}_k$ for $k=1,\dots,4$. Therefore, the analytic continuation is given by
\begin{align}\label{eq:sigI_2}
(\Sigma_2)_\ast\, \mathrm{Id}=\left(\bphi_{2}^{1}(2),\bphi_{2}^{2}(2),\bphi_{2}^{3}(2),\bphi_{2}^{4}(2)\right)\in\mathrm{GL}_4(\C).
\end{align}

\paragraph{The path \boldmath $\Sigma_3$\boldmath.}

The next path is a contour which loops around the singular point $p_3$ starting and ending at the base point $p_0$.
Let $r_3 = |x_3-x_0|$. This path is parameterized by
\[
\begin{bmatrix}
x_{3}(t)\\y_{3}(t)
\end{bmatrix}\bydef\begin{bmatrix}
x_3 + r_{3} e^{\im\pi t}\\
y_3 - 2r_{3} e^{\im\pi t}
\end{bmatrix},\quad t\in [-1,1].
\]
From the Pfaffian equation \eqref{eq:diff_eq}, we solve the initial value problem to analytically continue the solution on $\Sigma_{3}$.
\[
\frac{\rd\bphi_{3}^{k}}{\rd t} = \left(A(x_{3}(t),y_{3}(t)) \frac{\rd x_{3}}{\rd t} + B(x_{3}(t),y_{3}(t)) \frac{\rd y_{3}}{\rd t}\right)\bphi_{3}^{k}
\]
with the initial condition $\bphi_{3}^{k}(-1) = \bm{e}_k$ for $k=1,\dots,4$. Therefore, the analytic continuation is given by
\begin{align}\label{eq:sigI_3}
(\Sigma_3)_\ast\, \mathrm{Id}=\left(\bphi_{3}^{1}(1),\bphi_{3}^{2}(1),\bphi_{3}^{3}(1),\bphi_{3}^{4}(1)\right)\in\mathrm{GL}_4(\C).
\end{align}

\paragraph{The path \boldmath $\Sigma_4$\boldmath.}
The fourth path is a contour which loops around the singular point $p_4$ starting and ending at the base point $p_0$.
Let $r_4\bydef |x_4 - x_3|/2>0$.
We divide the path $\Sigma_4$ into three segments:
\begin{enumerate}
	\item[\boldmath$\Sigma_{4,1}$:]a counterclockwise semicircle centered at $c_4=(x_{c_4},y_{c_4})$ with radius $r_{c_4}>0$ defined by
	\[
	x_{c_4} \bydef \frac{x_4+r_4+x_0}{2},\quad y_{c_4} \bydef \frac{y_4-2r_4+y_0}{2},\quad r_{c_4} \bydef \frac{|x_4+r_4-x_0|}{2}.
	\]
	This path is defined by
	\[
	\begin{bmatrix}
	x_{4,1}(t)\\y_{4,1}(t)
	\end{bmatrix}\bydef\begin{bmatrix}
	x_{c_4} + r_{c_4} e^{\im\pi t}\\
	y_{c_4} - 2r_{c_4} e^{\im\pi t}
	\end{bmatrix},\quad t\in [-1,0].
	\]
	\item[\boldmath$\Sigma_{4,2}$:]a counterclockwise circle centered at $p_4$ with the radius $r_4$. This path is defined by
	\[
	\begin{bmatrix}
	x_{4,2}(t)\\y_{4,2}(t)
	\end{bmatrix}\bydef\begin{bmatrix}
	x_4 + r_4 e^{\im\pi t}\\
	y_4 - 2r_4 e^{\im\pi t}
	\end{bmatrix},\quad t\in [0,2].
	\]
	\item[\boldmath$\Sigma_{4,3}$:]a clockwise semicircle centered at $c_4$ with the radius $r_{c_4}$, which is the backward path of $\Sigma_{4,1}$. The path $\Sigma_{4,3}$ is parameterized by
	\[
	\begin{bmatrix}
	x_{4,3}(t)\\y_{4,3}(t)
	\end{bmatrix}\bydef\begin{bmatrix}
	x_{c_4} + r_{c_4} e^{-\im\pi t}\\
	y_{c_4} - 2r_{c_4} e^{-\im\pi t}
	\end{bmatrix},\quad t\in [0,1].
	\]
\end{enumerate}
From the Pfaffian equation \eqref{eq:diff_eq}, to analytically continue the solution, we solve the initial value problem on each path $\Sigma_{4,j}$ ($j=1,2,3$).
\[
\frac{\rd\bphi_{4,j}^{k}}{\rd t} = \left(A(x_{4,j}(t),y_{4,j}(t)) \frac{\rd x_{4,j}}{\rd t} + B(x_{4,j}(t),y_{4,j}(t)) \frac{\rd y_{4,j}}{\rd t}\right)\bphi_{4,j}^{k}
\]
with the initial condition for $k=1,\dots,4$ given by
\[
\bphi_{4,1}^{k}(-1) = \bm{e}_k,\quad
\bphi_{4,2}^{k}(0) = \bphi_{4,1}^{k}(0),\quad
\bphi_{4,3}^{k}(0) = \bphi_{4,2}^{k}(2).
\]
Therefore, the analytic continuation along the path $\Sigma_4=\bigcup_{j=1,2,3} \Sigma_{4,j}$ is given by
\begin{align}\label{eq:sigI_4}
(\Sigma_4)_\ast\, \mathrm{Id}=\left(\bphi_{4,3}^{1}(1),\bphi_{4,3}^{2}(1),\bphi_{4,3}^{3}(1),\bphi_{4,3}^{4}(1)\right)\in\mathrm{GL}_4(\C).
\end{align}

\paragraph{The path \boldmath $\Sigma_5$\boldmath.}
The fifth path is a contour which loops around the singular point $p_5$ starting and ending at the base point $p_0$.
Let $r_5\bydef |x_5 - x_4|/2>0$.
We divide the path $\Sigma_5$ into three segments:
\begin{enumerate}
	\item[\boldmath$\Sigma_{5,1}$:]a segment from $p_0=(x_0,y_0)$ to the point $(x_5-r_5,y_5+2r_5)$, which is defined by
	\[
	\begin{bmatrix}
	x_{5,1}(t)\\y_{5,1}(t)
	\end{bmatrix}\bydef\begin{bmatrix}
	(1-t)x_0 + t(x_5-r_5)\\
	(1-t)y_0 + t(y_5+2r_5)
	\end{bmatrix},\quad t\in [0,1].
	\]
	\item[\boldmath$\Sigma_{5,2}$:]a counterclockwise circle centered at $p_5$ with the radius $r_5$. This path is defined by
	\[
	\begin{bmatrix}
	x_{5,2}(t)\\y_{5,2}(t)
	\end{bmatrix}\bydef\begin{bmatrix}
	x_5 + r_5 e^{\im\pi t}\\
	y_5 - 2r_5 e^{\im\pi t}
	\end{bmatrix},\quad t\in [-1,1].
	\]
	\item[\boldmath$\Sigma_{5,3}$:] the backward segment from the point $(x_5-r_5,y_5+2r_5)$ to $p_0$. Then, the path $\Sigma_{5,3}$ is defined by
	\[
	\begin{bmatrix}
	x_{5,3}(t)\\y_{5,3}(t)
	\end{bmatrix}\bydef\begin{bmatrix}
	(1-t)(x_5-r_5) + tx_0\\
	(1-t)(y_5+2r_5) + ty_0
	\end{bmatrix},\quad t\in [0,1].
	\]
\end{enumerate}
From the Pfaffian equation \eqref{eq:diff_eq}, to analytically continue the solution, we solve the initial value problem on each path $\Sigma_{5,j}$ ($j=1,2,3$).
\[
\frac{\rd\bphi_{5,j}^{k}}{\rd t} = \left(A(x_{5,j}(t),y_{5,j}(t)) \frac{\rd x_{5,j}}{\rd t} + B(x_{5,j}(t),y_{5,j}(t)) \frac{\rd y_{5,j}}{\rd t}\right)\bphi_{5,j}^{k}
\]
with the initial condition for $k=1,\dots,4$ given by
\[
\bphi_{5,1}^{k}(0) = \bm{e}_k,\quad
\bphi_{5,2}^{k}(-1) = \bphi_{5,1}^{k}(1),\quad
\bphi_{5,3}^{k}(0) = \bphi_{5,2}^{k}(1).
\]
Therefore, the analytic continuation along the path $\Sigma_5=\bigcup_{j=1,2,3} \Sigma_{5,j}$  is given by
\begin{align}\label{eq:sigI_5}
(\Sigma_5)_\ast\, \mathrm{Id}=\left(\bphi_{5,3}^{1}(1),\bphi_{5,3}^{2}(1),\bphi_{5,3}^{3}(1),\bphi_{5,3}^{4}(1)\right)\in\mathrm{GL}_4(\C).
\end{align}

\paragraph{The path \boldmath $\Sigma_6$\boldmath.}
The final path is a contour which loops around the singular point $p_6$ starting and ending at the base point $p_0$.
Let $r_6\bydef |x_6 - x_4|/2>0$.
We divide the path $\Sigma_6$ into three segments:
\begin{enumerate}
	\item[\boldmath$\Sigma_{6,1}$:]a segment from $p_0=(x_0,y_0)$ to the point $(x_6-r_6,y_6+2r_6)$, which is defined by
	\[
	\begin{bmatrix}
	x_{6,1}(t)\\y_{6,1}(t)
	\end{bmatrix}\bydef\begin{bmatrix}
	(1-t)x_0 + t(x_6-r_6)\\
	(1-t)y_0 + t(y_6+2r_6)
	\end{bmatrix},\quad t\in [0,1].
	\]
	\item[\boldmath$\Sigma_{6,2}$:]a counterclockwise circle centered at $p_6$ with the radius $r_6$. This path is defined by
	\[
	\begin{bmatrix}
	x_{6,2}(t)\\y_{6,2}(t)
	\end{bmatrix}\bydef\begin{bmatrix}
	x_6 + r_6 e^{\im\pi t}\\
	y_6 - 2r_6 e^{\im\pi t}
	\end{bmatrix},\quad t\in [-1,1].
	\]
	\item[\boldmath$\Sigma_{6,3}$:]backward the segment from $(x_6-r_6,y_6+2r_6)$ to $p_0$. The path $\Sigma_{6,3}$ is defined by
	\[
	\begin{bmatrix}
	x_{6,3}(t)\\y_{6,3}(t)
	\end{bmatrix}\bydef\begin{bmatrix}
	(1-t)(x_6-r_6) + tx_0\\
	(1-t)(y_6+2r_6) + ty_0
	\end{bmatrix},\quad t\in [0,1].
	\]
\end{enumerate}
From the Pfaffian equation \eqref{eq:diff_eq}, to analytically continue the solution, we solve the initial value problem on each path $\Sigma_{6,j}$ ($j=1,2,3$).
\[
\frac{\rd\bphi_{6,j}^{k}}{\rd t} = \left(A(x_{6,j}(t),y_{6,j}(t)) \frac{\rd x_{6,j}}{\rd t} + B(x_{6,j}(t),y_{6,j}(t)) \frac{\rd y_{6,j}}{\rd t}\right)\bphi_{6,j}^{k}
\]
with the initial condition for $k=1,\dots,4$ given by
\[
\bphi_{6,1}^{k}(0) =  \bm{e}_k,\quad
\bphi_{6,2}^{k}(-1) = \bphi_{6,1}^{k}(1),\quad
\bphi_{6,3}^{k}(0) = \bphi_{6,2}^{k}(1).
\]
Therefore, the analytic continuation along the path $\Sigma_6=\bigcup_{j=1,2,3} \Sigma_{6,j}$ is given by
\begin{align}\label{eq:sigI_6}
(\Sigma_6)_\ast\, \mathrm{Id}=\left(\bphi_{6,3}^{1}(1),\bphi_{6,3}^{2}(1),\bphi_{6,3}^{3}(1),\bphi_{6,3}^{4}(1)\right)\in\mathrm{GL}_4(\C).
\end{align}

\subsection{Particular solutions to the Picard--Fuchs differential equation}\label{sec:fundamental_sol}
\revise{The rest of the setting} to compute the monodromy is to obtain the value of the fundamental system of solutions for  \eqref{eq:diff_eq} at the base point $p_0$. \revise{The Pfaffian equation \eqref{eq:diff_eq} is formulated for $\bphi = (\varphi, \varphi_x, \varphi_y, \varphi_{xy})^T$, and hence the fundamental system of solutions consists of particular solutions to \eqref{eq:PicardFuchs} and their partial derivatives.
In this part, we introduce certain series representations of the particular solutions defined on a neighborhood of $p_0$, which is presented in \cite{bib:ishige}.}
We also derive their derivatives to compute the value of fundamental system of solutions for obtaining the monodromy matrices of \eqref{eq:diff_eq}.
\begin{thm}[{\cite[Proposition~3.7]{bib:ishige}}]\label{thm:Fundamantal_sol}
	Consider the Picard--Fuchs differential equation \eqref{eq:PicardFuchs} under the coordinate $(\lambda,\mu)$ given in \eqref{eq:def_transformc}.
	If the variables $(\lambda,\mu)$ satisfy
	\begin{align}\label{eq:convergencecondition}
		\begin{cases}
			\lvert\lambda\rvert+\lvert\mu\rvert<\frac1{256}\\
			(\lvert\lambda\rvert+{\lvert\lambda\rvert}^2)/\lvert\mu\rvert<\frac{64}{25},
		\end{cases}
	\end{align}
	then the particular solutions of \eqref{eq:PicardFuchs} are given by the following series representations:
	\begin{align}\label{eq:phi1}
		\varphi_1(\lambda,\mu) &\bydef \sum_{l,m\ge0} a_{l,m}\lambda^l\mu^m\\\label{eq:phi2}
		\varphi_2(\lambda,\mu) &\bydef \frac{1}{2\pi^2}\sum_{l,m\ge0}a_{l,m}\left[(\log\mu+b_{l,m})^2-c_{l,m}\right]\lambda^l\mu^m+\frac{1}{2} d_{l+1,m}\frac{\lambda^{l+2m+1}}{\mu^{l+m+1}}\\\label{eq:phi3}
		\varphi_3(\lambda,\mu) &\bydef \frac{1}{4\pi^2}\sum_{l,m\ge0}d_{l+\frac{1}{2},m}\frac{\lambda^{l+2m+\frac{1}{2}}}{\mu^{l+m+\frac{1}{2}}}\\\label{eq:phi4}
		\varphi_4(\lambda,\mu) &\bydef \frac{1}{2\pi\im}\sum_{l,m\ge0}a_{l,m}(\log\mu+b_{l,m})\lambda^l\mu^m,
	\end{align}
	where $\im=\sqrt{-1}$ and the coefficients $a_{l,m}$, $b_{l,m}$, $c_{l,m}$, $d_{l,m}$ are defined by
	\begin{align}\label{eq:a_lm}
		a_{l,m}&\bydef \frac{(2l+4m)!}{(l+m)!\,l!\,(m!)^3}\\\label{eq:b_lm}
		b_{l,m}&\bydef \sum_{j=1}^{2l+4m}\frac{4}{j}-\sum_{j=1}^{l+m}\frac{1}{j}-\sum_{j=1}^{m}\frac{3}{j}\\\label{eq:c_lm}
		c_{l,m}&\bydef \sum_{j=1}^{2l+4m}\frac{16}{j^2}-\sum_{j=1}^{l+m}\frac{1}{j^2}-\sum_{j=1}^{m}\frac{3}{j^2}\\\label{eq:d_lm}
		d_{l,m}&\bydef (-1)^{l+m}\frac{\Gamma(l+m)^3}{\Gamma(2l)\Gamma(l+2m+1)\Gamma(m+1)}.
	\end{align}
\end{thm}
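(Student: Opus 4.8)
The plan is to establish the two assertions of the theorem in turn: that the four series converge and define holomorphic (multivalued) functions on the region \eqref{eq:convergencecondition}, and that the functions so defined are solutions of \eqref{eq:PicardFuchs}. For the second, harder, assertion I would first rewrite \eqref{eq:PicardFuchs}--\eqref{eq:var_coeffs} in the coordinates $(\lambda,\mu)$ of \eqref{eq:def_transformc}. Using $x=\mu(\lambda-\tfrac14)^{-2}$, $y=\mu(\lambda-\tfrac14)^{-3}$ and the chain rule, $\partial_x$ and $\partial_y$ turn into first-order operators in $\partial_\lambda,\partial_\mu$ with coefficients rational in $\lambda,\mu$, and substituting these into \eqref{eq:PicardFuchs} produces an equivalent rank-$4$ integrable system in $(\lambda,\mu)$. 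The coordinate change is chosen precisely so that this system is Fuchsian along the singular component $\mu=0$ (which is $x=0$), with local monodromy there consisting of a three-dimensional unipotent block together with one further eigenvalue; this is exactly the structure reflected by the four series---the holomorphic $\varphi_1$, the single-logarithm $\varphi_4$, the double-logarithm $\varphi_2$, and the fractional/negative-power solution $\varphi_3$---while the rational tail $d_{l+1,m}\lambda^{l+2m+1}/\mu^{l+m+1}$ in $\varphi_2$ is the correction forced by the integer spacing among the local exponents.

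The core computation is to substitute each series into the transformed system and sort terms by powers of $\lambda$, $\mu$, and $\log\mu$. For $\varphi_1=\sum a_{l,m}\lambda^l\mu^m$ this gives a two-index recursion, and one verifies that $a_{l,m}$ in \eqref{eq:a_lm}, which is the multinomial coefficient $\binom{2l+4m}{\,l+m,\ l,\ m,\ m,\ m\,}$, satisfies it---equivalently, that $\sum a_{l,m}\lambda^l\mu^m$ is annihilated by the two second-order operators coming from \eqref{eq:PicardFuchs}; alternatively one recognises $\varphi_1$ as the GKZ $\mathcal A$-hypergeometric series of the underlying reflexive polytope, which is known to solve the Picard--Fuchs system. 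For $\varphi_4,\varphi_2,\varphi_3$ a Frobenius-type argument applies: from a formal solution depending on an exponent parameter $\rho$ near $\mu=0$, the derivatives $\partial_\rho$ and $\partial_\rho^2$ at $\rho=0$ generate the $\log\mu$ and $(\log\mu)^2$ terms, the digamma-type sums $b_{l,m},c_{l,m}$ of \eqref{eq:b_lm}--\eqref{eq:c_lm} appearing as first and second logarithmic $\rho$-derivatives of the coefficients, and the Gamma-quotients $d_{l,m}$ of \eqref{eq:d_lm} coming from the fractional/resonant branch; substituting and using the recursions for $b_{l,m},c_{l,m},d_{l,m}$ then shows term-by-term that the residuals vanish and that both equations of \eqref{eq:PicardFuchs} hold. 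The four solutions have pairwise distinct leading behaviour at $\mu=0$, hence are linearly independent and form a fundamental system; matching their leading coefficients with residue (Mellin--Barnes) expansions of the period integrals near $p_0$ identifies them with the periods in \eqref{eq:IntegrationRepresentation}.

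Convergence is a two-variable root-test estimate. Writing $a_{l,m}$ as a multinomial coefficient yields $a_{l,m}\le (2l+4m)^{2l+4m}/\big((l+m)^{l+m}\,l^{l}\,m^{3m}\big)$; a Stirling analysis along the rays $l:m=\alpha:(1-\alpha)$ shows that $\sum a_{l,m}|\lambda|^{l}|\mu|^{m}$, the analytic part of $\varphi_2$, and $\varphi_4$ (the $|\log\mu|$ factors being subexponential, hence harmless) converge under the first inequality $|\lambda|+|\mu|<\tfrac1{256}$, the decisive growth rate being $a_{0,m}^{1/m}\to 256$. For $\varphi_3$ and the rational tail of $\varphi_2$ one estimates $d_{l,m}$ of \eqref{eq:d_lm} by the same method; since those terms carry negative powers $\mu^{-(l+m+c)}$, convergence requires $|\mu|$ to be bounded below relative to $|\lambda|$, and the Gamma-function asymptotics of the $d$-coefficients (involving values at half-integers), combined with the first inequality, are exactly what make $(|\lambda|+|\lambda|^2)/|\mu|<\tfrac{64}{25}$ the right quantitative bound. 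Putting the two estimates together gives absolute, locally uniform convergence throughout \eqref{eq:convergencecondition}.

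I expect the main obstacle to be the second part. Carrying out the coordinate change on \eqref{eq:PicardFuchs}--\eqref{eq:var_coeffs} explicitly and matching the resulting two-index recursions against the closed forms $a_{l,m},b_{l,m},c_{l,m},d_{l,m}$ is a heavy, although mechanical, computation that relies on digamma identities for $b_{l,m},c_{l,m}$ and Gamma-function identities for $d_{l,m}$; and the bookkeeping for the logarithmic and resonant solutions---in particular getting the tail $d_{l+1,m}\lambda^{l+2m+1}/\mu^{l+m+1}$ in $\varphi_2$ to cancel correctly and checking that each series satisfies both equations of the system, not just one---is where mistakes are easy to make. A secondary difficulty is that the naive Stirling bounds do not by themselves land on the exact region \eqref{eq:convergencecondition}, so one has to use the two inequalities jointly, the second one keeping $|\lambda|$ below a fixed multiple of $|\mu|$, which is what controls the mixed directions in the root test.
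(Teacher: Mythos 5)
First, note that the paper does not prove this statement at all: Theorem~\ref{thm:Fundamantal_sol} is imported verbatim from \cite[Proposition~3.7]{bib:ishige}, so there is no in-paper proof to compare against. Your outline is broadly consistent with how such a result is established (recognising $\varphi_1$ as the GKZ-type hypergeometric series, generating the logarithmic solutions $\varphi_4,\varphi_2$ by Frobenius-type $\rho$-derivatives with $b_{l,m},c_{l,m}$ as digamma-type derivatives of the coefficients, the resonant branch giving $d_{l,m}$), and your convergence picture matches the quantitative analysis that the present paper \emph{does} carry out in Appendix~\ref{sec:appendix_proof} for the truncation tails: there the rate $256$ comes from the ratio $(4(n+1))!\,(n!)^4/\bigl((n+1)!\bigr)^4(4n)!\le 256$, and the rate $25/64$ for the $d$-series comes from a Stirling bound followed by a Lagrange-multiplier optimisation, exactly the "Gamma-function asymptotics" you invoke for the second inequality in \eqref{eq:convergencecondition}.

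However, as a proof your text is a plan rather than an argument, and the gap is precisely at the step you yourself flag as the hard one. You never write down the transformed system in $(\lambda,\mu)$, never exhibit the two-index recursions, and never verify that the closed forms \eqref{eq:a_lm}--\eqref{eq:d_lm} satisfy them; in particular the claim that the rational tail $\tfrac12 d_{l+1,m}\lambda^{l+2m+1}/\mu^{l+m+1}$ in $\varphi_2$ is "the correction forced by the integer spacing among the local exponents" is asserted, not derived, and this is exactly the piece that cannot be guessed from general Frobenius theory (its coefficient, its half-weight $\tfrac12$, and the index shift $l+1$ all require the explicit computation). Likewise the convergence argument is only sketched: you acknowledge that the naive ray-by-ray Stirling estimate does not by itself produce the region \eqref{eq:convergencecondition}, but you do not supply the joint estimate (e.g.\ the reduction of $\sum_{l+m=n}a_{l,m}|\lambda|^l|\mu|^m$ to $\tfrac{(4n)!}{(n!)^4}(|\lambda|+|\mu|)^n$, and the bound $\sum_{l+m=n}\tfrac{(n!)^2\,l!}{(2l)!\,(n+m)!}\lesssim\sqrt{n}\,(25/64)^n$ used in the appendix) that actually closes it. So the proposal identifies the right ingredients but defers all of the substantive verification; to stand as a proof it would need the explicit change of variables, the recursion checks with the digamma/Gamma identities, and the completed two-variable convergence estimate.
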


\begin{rem}
	Proposition~3.11 in \cite{bib:ishige} implies that this specific choice of particular solutions ensures the resulting monodromy matrices have integer entries. Therefore, we construct the fundamental system of solutions using these four particular solutions to obtain the monodromy matrices in $\mathrm{GL}_4(\Z)$.
\end{rem}

Note that the condition \eqref{eq:convergencecondition} holds at this base point, i.e., $(\lambda_0,\mu_0)=(2^{-10},2^{-10})$.

Next, we derive derivatives of the particular solutions.
From \eqref{eq:def_transformc} partial derivatives, with respect to $\lambda$ and $\mu$, are given by
\begin{align}\label{eq:lam_diff}
	\lambda_{x} = y^{-1},\quad \lambda_{y} = -xy^{-2},\quad\lambda_{xy}=-y^{-2},
\end{align}
\begin{align}\label{eq:mu_diff}
	\mu_{x}=3x^2y^{-2},\quad \mu_{y}=-2x^3y^{-3}\quad \mu_{xy}=-6x^2y^{-3}.
\end{align}
Using the chain rule of derivative, it follows that for $k=1,2,3,4$
\begin{align}\label{eq:phi_x}
	(\varphi_k)_x = \frac{\partial}{\partial x}\varphi_k(\lambda(x,y),\mu(x,y))
	=\frac{\partial\varphi_k}{\partial\lambda}\lambda_x + \frac{\partial\varphi_k}{\partial\mu}\mu_x.
\end{align}
Similarly, the $y$-derivative is given by
\begin{align}\label{eq:phi_y}
(\varphi_k)_y = \frac{\partial}{\partial y}\varphi_k(\lambda(x,y),\mu(x,y))
=\frac{\partial\varphi_k}{\partial\lambda}\lambda_y + \frac{\partial\varphi_k}{\partial\mu}\mu_y.
\end{align}
Furthermore, we have $xy$-derivative as follows:
\begin{align}
(\varphi_k)_{xy} &= \frac{\partial^2}{\partial x\partial y}\varphi_k(\lambda(x,y),\mu(x,y))\\
&=\frac{\partial}{\partial x}\left(\frac{\partial\varphi_k}{\partial\lambda}\lambda_y + \frac{\partial\varphi_k}{\partial\mu}\mu_y\right)\\
&=\frac{\partial}{\partial x}\left(\frac{\partial\varphi_k}{\partial\lambda}\right)\lambda_y + \frac{\partial\varphi_k}{\partial\lambda}\lambda_{xy} +  \frac{\partial}{\partial x}\left(\frac{\partial\varphi_k}{\partial\mu}\right)\mu_y + \frac{\partial\varphi_k}{\partial\mu}\mu_{xy}\\
&=\left(\frac{\partial^2\varphi_k}{\partial \lambda^2}\lambda_{x}+\frac{\partial^2\varphi_k}{\partial\lambda\partial\mu}\mu_{x}\right)\lambda_y + \frac{\partial\varphi_k}{\partial\lambda}\lambda_{xy} +  \left(\frac{\partial^2\varphi_k}{\partial\mu\partial\lambda}\lambda_{x} + \frac{\partial^2\varphi_k}{\partial\mu^2}\mu_{x}\right)\mu_y + \frac{\partial\varphi_k}{\partial\mu}\mu_{xy}.\label{eq:phi_xy}
\end{align}
Therefore, if we have the derivatives up to 2nd order with respect to both $\lambda$ and $\mu$, then we can construct the derivatives of the particular solutions $(\varphi_k)_x$, $(\varphi_k)_y$, and $(\varphi_k)_{xy}$ ($k=1,2,3,4$) using \eqref{eq:lam_diff}, \eqref{eq:mu_diff}, \eqref{eq:phi_x}, \eqref{eq:phi_y}, and \eqref{eq:phi_xy}.

The rest of this part is devoted to deriving each derivative of $\varphi_k$ with respect to $\lambda$ and $\mu$.
The derivatives of the first fundamental solution $\varphi_1$ defined in \eqref{eq:phi1} are given by
\begin{align}
\frac{\partial\varphi_1}{\partial\lambda}(\lambda,\mu) &= \sum_{l,m\ge0} la_{l,m}\lambda^{l-1}\mu^m\label{eq:phi_1_lam}\\
\frac{\partial\varphi_1}{\partial\mu}(\lambda,\mu) &= \sum_{l,m\ge0} ma_{l,m}\lambda^l\mu^{m-1}\label{eq:phi_1_mu}\\
\frac{\partial^2\varphi_1}{\partial\lambda^2}(\lambda,\mu) &= \sum_{l,m\ge0} l(l-1)a_{l,m}\lambda^{l-2}\mu^m\label{eq:phi_1_lamlam}\\
\frac{\partial^2\varphi_1}{\partial\mu\partial\lambda}(\lambda,\mu) &= \sum_{l,m\ge0} mla_{l,m}\lambda^{l-1}\mu^{m-1}\label{eq:phi_1_mulam}\\
\frac{\partial^2\varphi_1}{\partial\mu^2}(\lambda,\mu) &= \sum_{l,m\ge0} m(m-1)a_{l,m}\lambda^l\mu^{m-2}.\label{eq:phi_1_mumu}
\end{align}

Second, using the elementary calculations, the derivatives of the fundamental solution $\varphi_2$ defined in \eqref{eq:phi2} are given by
\begin{align}
\frac{\partial\varphi_2}{\partial\lambda}(\lambda,\mu) &= \frac{1}{2\pi^2}\sum_{l,m\ge0}la_{l,m}\left[(\log\mu+b_{l,m})^2-c_{l,m}\right]\lambda^{l-1}\mu^m+\frac{1}{2} d_{l+1,m}\left(l+2m+1\right)\frac{\lambda^{l+2m}}{\mu^{l+m+1}}\label{eq:phi_2_lam}\\
\frac{\partial\varphi_2}{\partial\mu}(\lambda,\mu) &= \frac{1}{2\pi^2}\sum_{l,m\ge0}a_{l,m}\left\{2(\log\mu + b_{l,m})+m\left[(\log\mu+b_{l,m})^2-c_{l,m}\right]\right\}\lambda^l\mu^{m-1} \\
&\hphantom{=}\quad 
-\frac{1}{2} d_{l+1,m}\left(l+m+1\right)\frac{\lambda^{l+2m+1}}{\mu^{l+m+2}}\label{eq:phi_2_mu}\\
\frac{\partial^2\varphi_2}{\partial\lambda^2}(\lambda,\mu) &= \frac{1}{2\pi^2}\sum_{l,m\ge0}l(l-1)a_{l,m}\left[(\log\mu+b_{l,m})^2-c_{l,m}\right]\lambda^{l-2}\mu^m \\
&\hphantom{=}\quad +\frac{1}{2} d_{l+1,m}\left(l+2m+1\right) \left(l+2m\right)\frac{\lambda^{l+2m-1}}{\mu^{l+m+1}}\label{eq:phi_2_lamlam}\\
\frac{\partial^2\varphi_2}{\partial\mu\partial\lambda}(\lambda,\mu) &= \frac{1}{2\pi^2}\sum_{l,m\ge0}la_{l,m}\left\{2(\log\mu + b_{l,m})+m\left[(\log\mu+b_{l,m})^2-c_{l,m}\right]\right\}\lambda^{l-1}\mu^{m-1} \\
&\hphantom{=}\quad -\frac{1}{2} d_{l+1,m}\left(l+m+1\right) \left(l+2m+1\right)\frac{\lambda^{l+2m}}{\mu^{l+m+2}}\label{eq:phi_2_mulam}\\
\frac{\partial^2\varphi_2}{\partial\mu^2}(\lambda,\mu) &= \frac{1}{2\pi^2}\sum_{l,m\ge0}a_{l,m}\left\{2+(4m-2)(\log\mu+b_{l,m})+m(m-1)\left[(\log\mu+b_{l,m})^2-c_{l,m}\right]\right\}\lambda^l\mu^{m-2} \\
&\hphantom{=}\quad +\frac{1}{2} d_{l+1,m} \left(l+m+1\right) \left(l+m+2\right)\frac{\lambda^{l+2m+1}}{\mu^{l+m+3}}.\label{eq:phi_2_mumu}
\end{align}

Third, the derivatives of the fundamental solution $\varphi_3$ defined in \eqref{eq:phi3} are given by
\begin{align}
\frac{\partial\varphi_3}{\partial\lambda}(\lambda,\mu) &= \frac{1}{4\pi^2}\sum_{l,m\ge0}d_{l+\frac{1}{2},m}\left(l+2m+\frac{1}{2}\right)\frac{\lambda^{l+2m-\frac{1}{2}}}{\mu^{l+m+\frac{1}{2}}}\label{eq:phi_3_lam}\\
\frac{\partial\varphi_3}{\partial\mu}(\lambda,\mu) &= -\frac{1}{4\pi^2}\sum_{l,m\ge0}d_{l+\frac{1}{2},m}\left(l+m+\frac{1}{2}\right)\frac{\lambda^{l+2m+\frac{1}{2}}}{\mu^{l+m+\frac{3}{2}}}\label{eq:phi_3_mu}\\
\frac{\partial^2\varphi_3}{\partial\lambda^2}(\lambda,\mu) &= \frac{1}{4\pi^2}\sum_{l,m\ge0}d_{l+\frac{1}{2},m}\left(l+2m+\frac{1}{2}\right)\left(l+2m-\frac{1}{2}\right)\frac{\lambda^{l+2m-\frac{3}{2}}}{\mu^{l+m+\frac{1}{2}}}\label{eq:phi_3_lamlam}\\
\frac{\partial^2\varphi_3}{\partial\mu\partial\lambda}(\lambda,\mu) &= -\frac{1}{4\pi^2}\sum_{l,m\ge0}d_{l+\frac{1}{2},m}\left(l+2m+\frac{1}{2}\right)\left(l+m+\frac{1}{2}\right)\frac{\lambda^{l+2m-\frac{1}{2}}}{\mu^{l+m+\frac{3}{2}}}\label{eq:phi_3_mulam}\\
\frac{\partial^2\varphi_3}{\partial\mu^2}(\lambda,\mu) &= \frac{1}{4\pi^2}\sum_{l,m\ge0}d_{l+\frac{1}{2},m}\left(l+m+\frac{1}{2}\right)\left(l+m+\frac{3}{2}\right)\frac{\lambda^{l+2m+\frac{1}{2}}}{\mu^{l+m+\frac{5}{2}}}.\label{eq:phi_3_mumu}
\end{align}

Lastly, the derivatives of the fundamental solution $\varphi_4$ defined in \eqref{eq:phi4} are given by
\begin{align}
\frac{\partial\varphi_4}{\partial\lambda}(\lambda,\mu) &= \frac{1}{2\pi\im}\sum_{l,m\ge0}la_{l,m}(\log\mu+b_{l,m})\lambda^{l-1}\mu^m\label{eq:phi_4_lam}\\
\frac{\partial\varphi_4}{\partial\mu}(\lambda,\mu) &= \frac{1}{2\pi\im}\sum_{l,m\ge0}a_{l,m} \left[1+m(\log\mu+b_{l,m})\right]\lambda^l\mu^{m-1}\label{eq:phi_4_mu}\\
\frac{\partial^2\varphi_4}{\partial\lambda^2}(\lambda,\mu) &= \frac{1}{2\pi\im}\sum_{l,m\ge0}l(l-1)a_{l,m}(\log\mu+b_{l,m})\lambda^{l-2}\mu^m\label{eq:phi_4_lamlam}\\
\frac{\partial^2\varphi_4}{\partial\mu\partial\lambda}(\lambda,\mu) &= \frac{1}{2\pi\im}\sum_{l,m\ge0}la_{l,m} \left[1+m(\log\mu+b_{l,m})\right]\lambda^{l-1}\mu^{m-1}\label{eq:phi_4_mulam}\\
\frac{\partial^2\varphi_4}{\partial\mu^2}(\lambda,\mu) &= \frac{1}{2\pi\im}\sum_{l,m\ge0}a_{l,m}\left\{(m-1)+m\left[1+(m-1)(\log\mu+b_{l,m})\right]\right\}\lambda^l\mu^{m-2}.\label{eq:phi_4_mumu}
\end{align}

Consequently, using the formulas \eqref{eq:phi_x}, \eqref{eq:phi_y}, and \eqref{eq:phi_xy}, we can explicitly obtain the derivatives of the particular solutions, that is $(\varphi_k)_x$, $(\varphi_k)_y$, and $(\varphi_k)_{xy}$ for $k=1,2,3,4$. Using these formulas, we construct the fundamental system of solution to \eqref{eq:diff_eq} in the neighborhood of the base point.

\subsection{Finding the monodromy matrix via analytic continuation}

Let $(\Sigma_i)_{\ast}$ denote the operation of analytic continuation along the path of contour $\Sigma_{i}$ ($i=1,\dots,6$) defined in Section \ref{sec:path_contour}. As explained in Section \ref{Sct:K3}, there exists
the monodromy matrix $M_{\Sigma_i}\in \mathrm{GL}_4(\Z)$ such that
\[
(\Sigma_i)_{\ast}\left(\bm{\varphi}^1,\bm{\varphi}^2,\bm{\varphi}^3,\bm{\varphi}^4\right) = \left(\bm{\varphi}^1,\bm{\varphi}^2,\bm{\varphi}^3,\bm{\varphi}^4\right) M_{\Sigma_i},
\]
where $\bm{\varphi}^{k}\bydef \left(\varphi_k,\partial_x \varphi_k,\partial_y \varphi_k,\partial_{xy} \varphi_k\right)^T$ denote the fundamental system of solutions using the particular solutions defined in \eqref{eq:phi1}--\eqref{eq:phi4}. 
Then the monodromy matrix gives the following group homomorphism:
\[
	\rho:\pi_1\left(p_0,\C^2\setminus \mathcal{S}\right)\to \mathrm{GL}_4(\Z),\quad \Sigma_i\mapsto M_{\Sigma_i}.
\]

To compute this monodromy matrix $M_{\Sigma_i}$, we use the value of fundamental system of solutions at the base point $p_0$, say $\Phi(p_0)\in \mathrm{GL}_4(\C)$ defined by
\begin{align}\label{eq:fundamental_sol_at_base_pt}
	\Phi(p_0)\bydef\begin{bmatrix}
	\varphi^1 & \varphi^2 & \varphi^3 &\varphi^4\\[2pt]
	\varphi_{x}^1 & \varphi_{x}^2 & \varphi_{x}^3 &\varphi_{x}^4\\[2pt]
	\varphi_{y}^1 & \varphi_{y}^2 & \varphi_{y}^3 &\varphi_{y}^4\\[2pt]
	\varphi_{xy}^1 & \varphi_{xy}^2 & \varphi_{xy}^3 &\varphi_{xy}^4
	\end{bmatrix},
\end{align}
where $\varphi^k\bydef\varphi_k(\lambda_0,\mu_0)$ and the subscript denotes partial derivatives, that is $\varphi_x^{k}\bydef\partial_x \varphi_k(\lambda_0,\mu_0)$, $\varphi_y^{k}\bydef\partial_y \varphi_k(\lambda_0,\mu_0)$, and  $\varphi_{xy}^{k}\bydef\partial_{xy} \varphi_k(\lambda_0,\mu_0)$ ($k=1,2,3,4$).

From the existence and uniqueness of the Cauchy problem of the linear ODEs, the solution space of \eqref{eq:diff_eq} and the space of its initial values are isomorphism. Therefore, the monodromy matrix satisfies
\[
	(\Sigma_i)_{\ast}\Phi(p_0) = \Phi(p_0) M_{\Sigma_i}\quad (i=1,\dots 6).
\]
As introduced in Section \ref{sec:general_approach}, this fact leads the following formula of the monodromy matrix:
\begin{align}\label{eq:monodromy_mat}
	M_{\Sigma_i} = \Phi(p_0)^{-1}\left((\Sigma_i)_\ast \mathrm{Id}\right)\Phi(p_0).
\end{align}
Here, we represents the results of each analytic continuation as described in \eqref{eq:sigI_1}, \eqref{eq:sigI_2}, \eqref{eq:sigI_3}, \eqref{eq:sigI_4}, \eqref{eq:sigI_5}, and \eqref{eq:sigI_6}
\[
(\Sigma_1)_\ast\, \mathrm{Id}=\left(\bphi_{1,3}^{1}(0),\bphi_{1,3}^{2}(0),\bphi_{1,3}^{3}(0),\bphi_{1,3}^{4}(0)\right)
\]
\[
(\Sigma_2)_\ast\, \mathrm{Id}=\left(\bphi_{2}^{1}(2),\bphi_{2}^{2}(2),\bphi_{2}^{3}(2),\bphi_{2}^{4}(2)\right)
\]
\[
(\Sigma_3)_\ast\, \mathrm{Id}=\left(\bphi_{3}^{1}(1),\bphi_{3}^{2}(1),\bphi_{3}^{3}(1),\bphi_{3}^{4}(1)\right)
\]
\[
(\Sigma_4)_\ast\, \mathrm{Id}=\left(\bphi_{4,3}^{1}(1),\bphi_{4,3}^{2}(1),\bphi_{4,3}^{3}(1),\bphi_{4,3}^{4}(1)\right)
\]
\[
(\Sigma_5)_\ast\, \mathrm{Id}=\left(\bphi_{5,3}^{1}(1),\bphi_{5,3}^{2}(1),\bphi_{5,3}^{3}(1),\bphi_{5,3}^{4}(1)\right)
\]
\[
(\Sigma_6)_\ast\, \mathrm{Id}=\left(\bphi_{6,3}^{1}(1),\bphi_{6,3}^{2}(1),\bphi_{6,3}^{3}(1),\bphi_{6,3}^{4}(1)\right).
\]

\section{Rigorous computation of the analytic continuation}\label{sec:rigor}
In this section, we present how we rigorously construct the monodromy matrix \eqref{eq:monodromy_mat} introduced in Section~\ref{sec:setup}. Our approach is based on validated numerics using interval arithmetic, which provides mathematically certified results through numerical computations. For further details on validated numerics, we refer the reader to relevant works such as \cite{MR2652784,MR2807595,MR3971222,MR1420838,notices_jb_jp}, etc. This approach enables explicit a \emph{posteriori} error controls for the computed results.
All computations in this study were executed on Ubuntu 22.04 LTS using an AMD(R) EPYC(TM) 9754 @ 2.25 GHz processor and the \emph{kv library} --- a C++ library for verified numerical computation (version 0.4.57) \cite{kv}. This library supports interval arithmetic and provides various rigorous numerical functions, including methods for verifying solutions to nonlinear systems and rigorous integration of ODEs using Taylor series representations with affine arithmetic \cite{Rump2015}. The code used to produce the results presented in the following sections is publicly available at \cite{code}.

\subsection{Explicit position of singular points}\label{sec:singular_points}
In this section, we obtain the explicit position of the six singular points, which is the intersection of the singular locus $\mathcal{S}$ in \eqref{eq:singula-locus} and the generic line $\bm{\ell}$ in \eqref{eq:C2plane}.
As presented in \eqref{eq:sing_pts_approx}, there are four real roots ($p_1,p_2,p_3,p_4$) and two complex roots ($p_5$, $p_6$).
Solving the system of complex-valued nonlinear equation
\begin{align}
	\begin{cases}
		xy(4x+y)\left[\left(36x+\frac{27}{2}y+1\right)^2-(1-12x)^3\right]=0\\
		2(x-x_0) + (y-y_0) = 0
	\end{cases}
\end{align}
using the Krawczyk method \cite{MR2652784,MR0255046} of interval arithmetic, we proved that these six singular points are included in the following interval vectors:
%
%
\begin{align*}
	p_1& \in \left(-0.0158713013_{9}^{8},~0\right)\\
	p_2& \in \left(0,~-0.0317426027_{7}^{6}\right)\\
	p_3& \in \left(0.0158713013_{8}^{9},~-0.0634852055_{4}^{3}\right)\\
	p_4& \in \left(0.0164304190_{3}^{4},~-0.0646034408_{5}^{4}\right)\\
	p_5& \in \left(0.0933472904_{8}^{9}+0.122494937_{2}^{3}\,\im,~-0.218437183_{8}^{7}-0.244989874_{5}^{4}\,\im\right)\\
	p_6& \in \left(0.0933472904_{8}^{9}-0.122494937_{3}^{2}\,\im,~-0.218437183_{8}^{7}+0.244989874_{4}^{5}\,\im\right),
\end{align*}
where the superscript and subscript represent the upper and lower bounds of the interval, respectively.

\subsection{Rigorous inclusion of the fundamental system of solutions}\label{sec:Rig_inclution_fund_sol}
\revise{Then next task} is to rigorously compute the values of $\varphi^{k}$, $\varphi^{k}_{x}$, $\varphi^{k}_{y}$, $\varphi^{k}_{xy}$ ($k=1,2,3,4$) in \eqref{eq:fundamental_sol_at_base_pt}. We recall that $\varphi^{k}= \varphi_k(\lambda_0,\mu_0)$ is the value of the particular solutions defined in \eqref{eq:phi1}--\eqref{eq:phi4} at $(\lambda_0,\mu_0)$.
Furthermore, $\varphi^{k}_{x}$, $\varphi^{k}_{y}$, and $\varphi^{k}_{xy}$ denote $x$-, $y$-, and $xy$-derivative at $(\lambda_0,\mu_0)$, respectively.

Our approach consists of two steps: First, we truncate the particular solutions \eqref{eq:phi1}--\eqref{eq:phi4} with sufficiently large indexes and compute the values of the particular solutions at $(\lambda_0,\mu_0)$ using interval arithmetic. Second, we estimate the truncation error bound based on the series representation. By combining the computed function value and the truncated error bound for each function, we obtain a rigorous inclusion of the fundamental system of solutions.

More precisely, letting $N$ be a number of truncation, we define the value of the truncated functions as
\begin{align}\label{eq:phi1_N}
	\varphi_1^{(N)}(\lambda_0,\mu_0) &\bydef \sum_{0\le l+m\le N} a_{l,m}\lambda_0^l\mu_0^m\\\label{eq:phi2_N}
	\varphi_2^{(N)}(\lambda_0,\mu_0) &\bydef \frac{1}{2\pi^2}\sum_{0\le l+m\le N}a_{l,m}\left[(\log\mu_0+b_{l,m})^2-c_{l,m}\right]\lambda_0^l\mu_0^m+\frac{1}{2} d_{l+1,m}\frac{\lambda_0^{l+2m+1}}{\mu_0^{l+m+1}}\\\label{eq:phi3_N}
	\varphi_3^{(N)}(\lambda_0,\mu_0) &\bydef \frac{1}{4\pi^2}\sum_{0\le l+m\le N}d_{l+\frac{1}{2},m}\frac{\lambda_0^{l+2m+\frac{1}{2}}}{\mu_0^{l+m+\frac{1}{2}}}\\\label{eq:phi4_N}
	\varphi_4^{(N)}(\lambda_0,\mu_0) &\bydef \frac{1}{2\pi\im}\sum_{0\le l+m\le N}a_{l,m}(\log\mu_0+b_{l,m})\lambda_0^l\mu_0^m,
\end{align}
where $a_{l,m}$, $b_{l,m}$, $c_{l,m}$, and $d_{l,m}$ are defined in \eqref{eq:a_lm}, \eqref{eq:b_lm}, \eqref{eq:c_lm}, and \eqref{eq:d_lm}, respectively.
The first step is to compute the value of truncated functions \eqref{eq:phi1_N}, \eqref{eq:phi2_N}, \eqref{eq:phi3_N}, and \eqref{eq:phi4_N} using interval arithmetic.

The second step is to get the truncated error bounds $\varepsilon^k$, $\varepsilon^k_x$, $\varepsilon^k_y$, $\varepsilon^k_{xy}>0$ ($k=1,2,3,4$) such that
\begin{align}
	\left|\varphi^k-\varphi_k^{(N)}(\lambda_0,\mu_0)\right|&\le \varepsilon^k\label{eq:epsilon}\\
	\left|\varphi^k_x-\partial_x\varphi_k^{(N)}(\lambda_0,\mu_0)\right|&\le \varepsilon^k_x\\
	\left|\varphi^k_y-\partial_y\varphi_k^{(N)}(\lambda_0,\mu_0)\right|&\le \varepsilon^k_y\\
	\left|\varphi^k_{xy}-\partial_{xy}\varphi_k^{(N)}(\lambda_0,\mu_0)\right|&\le \varepsilon^k_{xy},
\end{align}
where $\partial_x\varphi_k^{(N)}(\lambda_0,\mu_0)$, $\partial_y\varphi_k^{(N)}(\lambda_0,\mu_0)$, and $\partial_{xy}\varphi_k^{(N)}(\lambda_0,\mu_0)$ denote each partial derivative of the truncated function $\varphi_k^{(N)}$ at $(\lambda_0,\mu_0)$ via the form \eqref{eq:phi_x}, \eqref{eq:phi_y}, and \eqref{eq:phi_xy}, respectively.

From \eqref{eq:lam_diff}, \eqref{eq:mu_diff}, \eqref{eq:phi_x}, \eqref{eq:phi_y}, and \eqref{eq:phi_xy}, the above $\varepsilon^k_x$, $\varepsilon^k_y$, $\varepsilon^k_{xy}$ bounds are expressed by
\begin{align}
	\varepsilon^k_x&\bydef\varepsilon^k_\lambda|(\lambda_0)_x|+\varepsilon^k_\mu|(\mu_0)_x|=\varepsilon^k_\lambda|y_0^{-1}|+\varepsilon^k_\mu|3x_0^2y_0^{-2}|\label{eq:epsilon_x}\\
	\varepsilon^k_y&\bydef\varepsilon^k_\lambda|(\lambda_0)_y|+\varepsilon^k_\mu|(\mu_0)_y|=\varepsilon^k_\lambda|x_0y_0^{-2}|+\varepsilon^k_\mu|2x_0^3y_0^{-3}|\label{eq:epsilon_y}\\
	\varepsilon^k_{xy}&\bydef\left(\varepsilon^k_{\lambda\lambda}|(\lambda_0)_x|+\varepsilon^k_{\lambda\mu}|(\mu_0)_x|\right)|(\lambda_0)_y| + \varepsilon^k_\lambda\left|(\lambda_0)_{xy}\right|+\left(\varepsilon^k_{\lambda\mu}|(\lambda_0)_x|+\varepsilon^k_{\mu\mu}|(\mu_0)_x|\right)|(\mu_0)_y| + \varepsilon^k_\mu\left|(\mu_0)_{xy}\right|\\
	&=\left(\varepsilon^k_{\lambda\lambda}|y_0^{-1}|+\varepsilon^k_{\lambda\mu}|3x_0^2y_0^{-2}|\right)|x_0y_0^{-2}|+ \varepsilon^k_\lambda\left|y_0^{-2}\right|+\left(\varepsilon^k_{\lambda\mu}|y_0^{-1}|+\varepsilon^k_{\mu\mu}|3x_0^2y_0^{-2}|\right)|2x_0^3y_0^{-3}| + \varepsilon^k_\mu\left|6x_0^2y_0^{-3}\right|,\qquad\label{eq:epsilon_xy}
\end{align}
where $\varepsilon^k_\lambda$, $\varepsilon^k_\mu$, $\varepsilon^k_{\lambda\lambda}$, $\varepsilon^k_{\mu\lambda}$, $\varepsilon^k_{\mu\mu}$ are the truncated error bounds such that
\begin{align}
	\left|\partial_\lambda\varphi_k(\lambda_0,\mu_0)-\partial_\lambda\varphi_k^{(N)}(\lambda_0,\mu_0)\right|&\le \varepsilon^k_\lambda\label{eq:epsilon_lam}\\
	\left|\partial_\mu\varphi_k(\lambda_0,\mu_0)-\partial_\mu\varphi_k^{(N)}(\lambda_0,\mu_0)\right|&\le \varepsilon^k_\mu\label{eq:epsilon_mu}\\
	\left|\partial_{\lambda\lambda}\varphi_k(\lambda_0,\mu_0)-\partial_{\lambda\lambda}\varphi_k^{(N)}(\lambda_0,\mu_0)\right|&\le \varepsilon^k_{\lambda\lambda}\label{eq:epsilon_lamlam}\\
	\left|\partial_{\mu\lambda}\varphi_k(\lambda_0,\mu_0)-\partial_{\mu\lambda}\varphi_k^{(N)}(\lambda_0,\mu_0)\right|&\le \varepsilon^k_{\mu\lambda}\label{eq:epsilon_mulam}\\
	\left|\partial_{\mu\mu}\varphi_k(\lambda_0,\mu_0)-\partial_{\mu\mu}\varphi_k^{(N)}(\lambda_0,\mu_0)\right|&\le \varepsilon^k_{\mu\mu},\label{eq:epsilon_mumu}
\end{align}
respectively.

Therefore, using the truncated error bounds $\varepsilon^k$, $\varepsilon^k_x$, $\varepsilon^k_y$, $\varepsilon^k_{xy}$, we rigorously include the value of fundamental system of solutions for $k=1,2,3,4$
\begin{align}
	\varphi^{k}&\in\varphi_k^{(N)}(\lambda_0,\mu_0) + \varepsilon^k([-1,1]+[-1,1]\im)\\
	\varphi^{k}_x&\in\partial_x\varphi_k^{(N)}(\lambda_0,\mu_0) + \varepsilon^k_x([-1,1]+[-1,1]\im)\\
	\varphi^{k}_y&\in\partial_y\varphi_k^{(N)}(\lambda_0,\mu_0) + \varepsilon^k_y([-1,1]+[-1,1]\im)\\
	\varphi^{k}_{xy}&\in\partial_{xy}\varphi_k^{(N)}(\lambda_0,\mu_0) + \varepsilon^k_{xy}([-1,1]+[-1,1]\im).
\end{align}

The rest of this section is dedicated to the explicit construction of the bounds $\varepsilon^k$, $\varepsilon^k_\lambda$, $\varepsilon^k_\mu$, $\varepsilon^k_{\lambda\lambda}$, $\varepsilon^k_{\mu\lambda}$, and $\varepsilon^k_{\mu\mu}$ for each $k$ (=1,2,3,4).
Once we obtain these bounds, then the bounds $\varepsilon^k_x$, $\varepsilon^k_y$, and $\varepsilon^k_{xy}$ are obtained by the form \eqref{eq:epsilon_x}, \eqref{eq:epsilon_y}, and \eqref{eq:epsilon_xy}, respectively.


\subsubsection{Fundamental lemmas for truncation error bounds}\label{sec:fundamental_lemmas}
Before proceeding, we present three fundamental lemmas required for deriving the truncation error bounds. For improved readability, all proofs of these lemmas are provided in Appendix \ref{sec:appendix_proof}.

\begin{lem}\label{lem:TruncationError1}
	Let $a_{l,m}$, $b_{l,m}$, $c_{l,m}$, and $d_{l,m}$ be the coefficients for the particular solutions of \eqref{eq:PicardFuchs} defined in \eqref{eq:a_lm}, \eqref{eq:b_lm}, \eqref{eq:c_lm}, and \eqref{eq:d_lm}, respectively.
	Let $N\in\N$ be the truncated number and let
	\begin{align}\label{eq:betan}
		\beta_n\bydef 1 + \frac{3\log \left(1+\frac{1}{n}\right)}{4\log 4 + 3 + 3\log n},\quad n\ge 1.
	\end{align}
	If the variables $(\lambda,\mu)$ satisfy 
	\begin{align}\label{eq:convergencecondition2}
		\begin{cases}
			{256\beta_{\revise{N+1}}^2}\left(\lvert\lambda\rvert+\lvert\mu\rvert\right)<1\\[1mm]
			\frac{25}{64}\sqrt{1+\frac{1}{{\revise{N+1}}}}(\lvert\lambda\rvert+{\lvert\lambda\rvert}^2)/\lvert\mu\rvert<1
		\end{cases},
	\end{align}
	then the following inequalities hold:
	\begin{align}\label{eq:delta1}
		\left|\sum_{l+m\ge N+1}a_{l,m}\lambda^l \mu^m\right| &\le \frac{(4(N+1))!}{\left((N+1)!\right)^4}\frac{(|\lambda|+|\mu|)^{N+1}}{1-256(|\lambda|+|\mu|)}\bydef \delta_1(N,\lambda,\mu)\\\label{eq:delta2}
		\left|\sum_{l+m\ge N+1}a_{l,m}b_{l,m}\lambda^l \mu^m\right| &\le \frac{(4(N+1))!}{\left((N+1)!\right)^4}\left(4\log 4 + 3 + 3\log (N+1)\right)\frac{(|\lambda|+|\mu|)^{N+1}}{1-256\beta_{\revise{N+1}}(|\lambda|+|\mu|)}\bydef \delta_2(N,\lambda,\mu)\\\label{eq:delta3}
		\left|\sum_{l+m\ge N+1}a_{l,m}b_{l,m}^2\lambda^l \mu^m\right| &\le\frac{(4(N+1))!}{\left((N+1)!\right)^4}\left(4\log 4 + 3 + 3\log (N+1)\right)^2\frac{(|\lambda|+|\mu|)^{N+1}}{1-256\beta_{\revise{N+1}}^2(|\lambda|+|\mu|)}\bydef \delta_3(N,\lambda,\mu)\\\label{eq:delta4}
		\left|\sum_{l+m\ge N+1}a_{l,m}c_{l,m}\lambda^l \mu^m\right| &\le\frac{8}{3}\pi^2\frac{(4(N+1))!}{\left((N+1)!\right)^4}\frac{(|\lambda|+|\mu|)^{N+1}}{1-256(|\lambda|+|\mu|)}\bydef \delta_4(N,\lambda,\mu)\\\label{eq:delta5}
		\left|\sum_{l+m\ge N+1}d_{l+\epsilon,m}\frac{\lambda^{l+2m+\epsilon}}{\mu^{l+m+\epsilon}}\right| &\le\frac{e^3}{2\sqrt{2}\,\pi}\left|\frac{\lambda}{\mu}\right|^\epsilon\frac{\sqrt{N+1}\left[\frac{25}{64}\left(\frac{|\lambda|+|\lambda|^2}{|\mu|}\right)\right]^{N+1}}{1-\frac{25}{64}\sqrt{1+\frac{1}{\revise{N+1}}}\left(\frac{|\lambda|+|\lambda|^2}{|\mu|}\right)}\bydef \delta_5^\epsilon(N,\lambda,\mu),
	\end{align}
	where $\epsilon\in\{1/2,1\}$.
\end{lem}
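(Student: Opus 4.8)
The plan is to bound each of the five tail sums by reducing the double-indexed sum over $\{l+m \ge N+1\}$ to a geometric-type series in the single variable $|\lambda|+|\mu|$ (or $(|\lambda|+|\lambda|^2)/|\mu|$ for $\delta_5$), using the multinomial identity $\sum_{l+m=n}\binom{n}{l}|\lambda|^l|\mu|^m=(|\lambda|+|\mu|)^n$. The key is to find, for each coefficient appearing ($a_{l,m}$, $a_{l,m}b_{l,m}$, $a_{l,m}b_{l,m}^2$, $a_{l,m}c_{l,m}$, $d_{l+\epsilon,m}$), a clean upper bound of the product form $[\text{ratio bound}]\cdot\binom{l+m}{l}\cdot[\text{function of }n=l+m]$, so that summing over the level sets $l+m=n$ collapses the binomial coefficient and leaves a one-dimensional series.

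First I would handle $a_{l,m}$. Writing $a_{l,m} = \frac{(2l+4m)!}{(l+m)!\,l!\,(m!)^3}$, I would compare the ratio $a_{l+1,m}/a_{l,m}$ and $a_{l,m+1}/a_{l,m}$ against the value at $l=m$ (equivalently, show $a_{l,m}\le \frac{(4n)!}{(n!)^4}\binom{n}{l}\cdot(\text{something}\le 1)$ where $n=l+m$; the point is that the "worst" distribution of $n$ between $l$ and $m$ is captured by the factor $\frac{(4n)!}{(n!)^4}$ times the binomial). A short induction or direct ratio estimate shows $\frac{(2l+4m)!}{l!\,(m!)^3} \le \frac{(4n)!}{(n!)^3}$ at fixed $n=l+m$, whence $a_{l,m}\le \frac{(4n)!}{(n!)^4}\binom{n}{l}$; summing over $l$ with $l+m=n$ gives $\sum_{l+m=n}a_{l,m}|\lambda|^l|\mu|^m \le \frac{(4n)!}{(n!)^4}(|\lambda|+|\mu|)^n$. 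Then I bound $\frac{(4(n+1))!}{((n+1)!)^4}\big/\frac{(4n)!}{(n!)^4}$: this ratio equals $\frac{(4n+1)(4n+2)(4n+3)(4n+4)}{(n+1)^4}$, which is increasing in $n$ and tends to $256$, so it is $\le 256$ for all $n\ge 0$; hence the tail from $n=N+1$ onward is dominated by $\frac{(4(N+1))!}{((N+1)!)^4}\sum_{j\ge 0}\big(256(|\lambda|+|\mu|)\big)^j(|\lambda|+|\mu|)^{N+1}$, which is exactly $\delta_1$ under the first condition of \eqref{eq:convergencecondition2}.

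For $\delta_2,\delta_3,\delta_4$, I would feed the logarithmic/polynomial factors $b_{l,m}$ and $c_{l,m}$ into this machinery. Here $b_{l,m}=\sum_{j=1}^{2l+4m}\frac4j-\sum_{j=1}^{l+m}\frac1j-\sum_{j=1}^m\frac3j \le 4\sum_{j=1}^{2n}\frac1j$ crudely, but the sharper estimate $|b_{l,m}| \le 4\log 4 + 3 + 3\log n$ (valid for $n\ge 1$) is what produces the stated constants; this is where the function $\beta_n$ in \eqref{eq:betan} enters — it is designed precisely so that $b_{l+1,m}$-type increments are controlled by $\beta_n$ times $b_{l,m}$, letting the $256$ in the geometric ratio be replaced by $256\beta_{N+1}$ (resp.\ $256\beta_{N+1}^2$ for $b_{l,m}^2$). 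For $c_{l,m}$ one uses $\sum 1/j^2 \le \pi^2/6$ termwise to get $|c_{l,m}|\le 16\cdot\frac{\pi^2}{6}=\frac{8\pi^2}{3}$, a flat bound, hence $\delta_4$ keeps the plain $256$ in the denominator. For $\delta_5$, the coefficient $d_{l+\epsilon,m}=(-1)^{l+m}\frac{\Gamma(l+m+\epsilon)^3}{\Gamma(2l+2\epsilon)\Gamma(l+2m+1+\epsilon)\Gamma(m+1)}$ (with $\epsilon\in\{1/2,1\}$) I would bound via Stirling/Gautschi-type ratio estimates for Gamma functions: factor out $|\lambda/\mu|^\epsilon$, write the rest as $(\lambda^2/\mu)^l$ times $(\lambda^2/\mu)^{?}$... more precisely the exponents $\lambda^{l+2m+\epsilon}\mu^{-(l+m+\epsilon)}$ group as $(\lambda/\mu)^\epsilon\cdot(\lambda^2/\mu)^{l+m}\cdot(\mu/\lambda\cdot\text{stuff})$ — I would regroup so the controlling variable is $(|\lambda|+|\lambda|^2)/|\mu|$, bound $|d_{n+\epsilon}|$-type level sums by $\frac{C}{\sqrt n}\cdot(\text{const})^n$ with the $\frac{25}{64}$ coming from the worst-case Gamma ratio and the $\frac{e^3}{2\sqrt2\,\pi}$ from a uniform Stirling remainder, and sum the resulting $\sum_{n\ge N+1}\frac{1}{\sqrt n}(\cdots)^n$ against its first term, using $\frac{1}{\sqrt{n+1}}\big/\frac{1}{\sqrt n}=\sqrt{n/(n+1)}\ge\sqrt{(N+1)/(N+2)}$... actually bounding the ratio of successive terms by $\frac{25}{64}\sqrt{1+\frac1{N+1}}$ as in condition two of \eqref{eq:convergencecondition2}.

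The main obstacle I anticipate is establishing the sharp combinatorial ratio bounds — in particular that $a_{l,m}\le\frac{(4n)!}{(n!)^4}\binom{n}{l}$ uniformly and that the logarithmic growth of $b_{l,m}$ is genuinely captured by the correction factor $\beta_n$ (so that the geometric series for $\delta_2,\delta_3$ actually closes under \eqref{eq:convergencecondition2}). The $\delta_5$ estimate is the most delicate numerically, since one needs an explicit, uniform-in-$n$ Gamma-function ratio bound producing the precise constants $\frac{25}{64}$, $\sqrt{1+1/(N+1)}$, and $\frac{e^3}{2\sqrt2\,\pi}$; getting those constants exactly rather than merely up to an unspecified factor is where the real work lies, and I would expect to invoke explicit two-sided Stirling bounds (e.g.\ $\sqrt{2\pi}\,n^{n+1/2}e^{-n}\le \Gamma(n+1)\le e\,n^{n+1/2}e^{-n}$) and track each term carefully. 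All of this routine-but-careful estimation is, appropriately, deferred to Appendix~\ref{sec:appendix_proof}.
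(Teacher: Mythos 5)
Your overall route is the same as the paper's: decompose the tail over level sets $l+m=n$, factor out the binomial coefficient so that $\sum_{l+m=n}\binom{n}{l}|\lambda|^l|\mu|^m=(|\lambda|+|\mu|)^n$, bound what remains by its worst case on the level set, and compare with a geometric series of ratio $256$ (resp.\ $256\beta_{N+1}$, $256\beta_{N+1}^2$ after inserting $b_{l,m}\le 4\log 4+3+3\log n$; here $\beta_n$ is simply the ratio $\bigl(4\log 4+3+3\log(n+1)\bigr)/\bigl(4\log 4+3+3\log n\bigr)$, and one also uses that $\beta_n$ is decreasing so it can be frozen at $N+1$), the flat bound $c_{l,m}\le 16\zeta(2)=\tfrac{8}{3}\pi^2$ for $\delta_4$, and, for $\delta_5^\epsilon$, monotonicity of $\Gamma$ plus the two-sided Stirling bounds followed by a worst-case optimization over the level set: the paper minimizes $2^{2l}l^l(n+m)^{n+m}$ subject to $l+(n+m)=2n$ by Lagrange multipliers ($l^\ast=2n/5$), which is where $(25/64)^n$ and the constant $\tfrac{e^3}{2\sqrt 2\,\pi}$ come from, and then compares with a series whose successive-term ratio is $\tfrac{25}{64}\sqrt{1+1/(N+1)}$, exactly as you indicate.

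One proposed sub-step would fail as stated: the inequality $\frac{(2l+4m)!}{l!\,(m!)^3}\le\frac{(4n)!}{(n!)^3}$ at fixed $n=l+m$ is false for large $n$. For $(l,m)=(1,n-1)$ the ratio of the two sides equals $\frac{n^2}{4(4n-1)}$, which exceeds $1$ for all $n\ge 16$; so you cannot obtain the unweighted bound $a_{l,m}\le\frac{(4n)!}{(n!)^4}$ this way. The fix is to keep the binomial weight before maximizing, which is what the paper does: write $a_{l,m}=\frac{(2n+2m)!}{(n!)^2(m!)^2}\cdot\frac{n!}{l!\,m!}$ and maximize only the factor $\frac{(2n+2m)!}{(m!)^2}$ over $0\le m\le n$; it is increasing in $m$, so its maximum is $\frac{(4n)!}{(n!)^2}$ at $m=n$, yielding precisely your target $a_{l,m}\le\frac{(4n)!}{(n!)^4}\binom{n}{l}$ and hence the level-set bound $\frac{(4n)!}{(n!)^4}(|\lambda|+|\mu|)^n$. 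Two small additional remarks: the $\delta_5^\epsilon$ level sums grow like $\sqrt n\,(25/64)^n$, not $n^{-1/2}(25/64)^n$ (your later ratio $\tfrac{25}{64}\sqrt{1+1/(N+1)}$ and the $\sqrt{N+1}$ prefactor are the consistent versions), and $\beta_n$ controls the increment of the logarithmic bound in $n$ rather than of $b_{l,m}$ itself. With these corrections your argument coincides with the paper's proof in Appendix A.
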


\begin{lem}\label{lem:TruncationError2}
	Let $a_{l,m}$, $b_{l,m}$, $c_{l,m}$, and $d_{l,m}$ be the coefficients for the particular solutions of \eqref{eq:PicardFuchs} defined in \eqref{eq:a_lm}, \eqref{eq:b_lm}, \eqref{eq:c_lm}, and \eqref{eq:d_lm}, respectively.
	Let $N\in\N$ be the truncated number and let $\beta_{\revise{n}}$ be the same as that defined in \eqref{eq:betan}.
	Additionally, for $\epsilon\in\{1/2,1\}$, let $\gamma_{\revise{n}}$, $\eta_{\revise{n}}^\epsilon$, and $\theta_{\revise{n}}^\epsilon$ be defined by
	\begin{align}\label{eq:gam_eta_theta}
		\gamma_{\revise{n}}\bydef64\left(4+\frac1{\revise{n}}\right),\quad\eta_{\revise{n}}^\epsilon\bydef\frac{25}{64}\left(1+\frac2{2{\revise{n}}+\epsilon}\right)\sqrt{1+\frac1{\revise{n}}},~\mbox{and}\quad\theta_{\revise{n}}^\epsilon\bydef\frac{25}{64}\left(1+\frac1{{\revise{n}}+\epsilon}\right)\sqrt{1+\frac1{\revise{n}}},
	\end{align}
	respectively.
	If the variables $(\lambda,\mu)$ satisfy 
	\begin{align}\label{eq:convergencecondition3}
		\begin{cases}
			\lvert\lambda\rvert+\lvert\mu\rvert<\min\left\{1/{\gamma_{\revise{N+1}}},~1/({\beta_{\revise{N+1}}\gamma_{\revise{N+1}}}),~1/({\beta_{\revise{N+1}}^2\gamma_{\revise{N+1}}})\right\}\\
			(\lvert\lambda\rvert+{\lvert\lambda\rvert}^2)/\lvert\mu\rvert<\min\left\{1/{\eta_{\revise{N+1}}^\epsilon},~1/{\theta_{\revise{N+1}}^\epsilon}\right\},
		\end{cases}
	\end{align}
	then the following inequalities related to first order partial derivatives hold:
	\begin{align}\label{eq:delta6}
		&\left|\sum_{l+m\ge N+1}la_{l,m}\lambda^{l-1} \mu^m\right|,\qquad\left|\sum_{l+m\ge N+1}ma_{l,m}\lambda^{l} \mu^{m-1}\right|  \le \frac{(4(N+1))!}{N!\left((N+1)!\right)^3}\frac{(|\lambda|+|\mu|)^{N}}{1-\gamma_{\revise{N+1}}(|\lambda|+|\mu|)}\bydef \delta_6(N,\lambda,\mu)\\
		&\left|\sum_{l+m\ge N+1}la_{l,m}b_{l,m}\lambda^{l-1} \mu^m\right|,~\left|\sum_{l+m\ge N+1}ma_{l,m}b_{l,m}\lambda^l \mu^{m-1}\right|\\  &\qquad\qquad\qquad\qquad\qquad\qquad\le\frac{(4(N+1))!}{N!\left((N+1)!\right)^3}\frac{\left(4\log 4 + 3 + 3\log (N+1)\right)(|\lambda|+|\mu|)^{N}}{1-\beta_{\revise{N+1}}\gamma_{\revise{N+1}}(|\lambda|+|\mu|)}\bydef \delta_7(N,\lambda,\mu)\label{eq:delta7}\\
		&\left|\sum_{l+m\ge N+1}la_{l,m}b_{l,m}^2\lambda^{l-1} \mu^m\right|,~\left|\sum_{l+m\ge N+1}ma_{l,m}b_{l,m}^2\lambda^l \mu^{m-1}\right|\\ 
		&\qquad\qquad\qquad\qquad\qquad\qquad\le \frac{(4(N+1))!}{N!\left((N+1)!\right)^3}\frac{\left(4\log 4 + 3 + 3\log (N+1)\right)^2(|\lambda|+|\mu|)^{N}}{1-\beta_{\revise{N+1}}^2\gamma_{\revise{N+1}}(|\lambda|+|\mu|)}\bydef \delta_8(N,\lambda,\mu)\label{eq:delta8}\\
		&\left|\sum_{l+m\ge N+1}la_{l,m}c_{l,m}\lambda^{l-1} \mu^m\right|,~\left|\sum_{l+m\ge N+1}ma_{l,m}c_{l,m}\lambda^l \mu^{m-1}\right| \le\frac83\pi^2\frac{(4(N+1))!}{N!\left((N+1)!\right)^3}\frac{(|\lambda|+|\mu|)^{N}}{1-\gamma_{\revise{N+1}}(|\lambda|+|\mu|)}\bydef \delta_9(N,\lambda,\mu)\label{eq:delta9}\\
		&\left|\sum_{l+m\ge N+1}d_{l+\epsilon,m}(l+2m+\epsilon)\frac{\lambda^{l+2m+\epsilon-1}}{\mu^{l+m+\epsilon}}\right|\\ &\qquad\qquad\qquad\qquad\qquad\le\frac{e^3}{2\sqrt{2}\,\pi}\left|\frac{\lambda}{\mu}\right|^\epsilon|\lambda|^{-1}\frac{\sqrt{N+1}\,(2(N+1)+\epsilon)\left[\frac{25}{64}\left(\frac{|\lambda|+|\lambda|^2}{|\mu|}\right)\right]^{N+1}}{1-\eta_{\revise{N+1}}^\epsilon\left(\frac{|\lambda|+|\lambda|^2}{|\mu|}\right)}\bydef \delta_{10}^\epsilon(N,\lambda,\mu)\label{eq:delta10}\\
		&\left|\sum_{l+m\ge N+1}d_{l+\epsilon,m}(l+m+\epsilon)\frac{\lambda^{l+2m+\epsilon}}{\mu^{l+m+\epsilon+1}}\right|\\ 
		&\quad\qquad\qquad\qquad\qquad\qquad\le\frac{e^3}{2\sqrt{2}\,\pi}\left|\frac{\lambda}{\mu}\right|^\epsilon|\mu|^{-1}\frac{\sqrt{N+1}\,(N+1+\epsilon)\left[\frac{25}{64}\left(\frac{|\lambda|+|\lambda|^2}{|\mu|}\right)\right]^{N+1}}{1-\theta_{\revise{N+1}}^\epsilon\left(\frac{|\lambda|+|\lambda|^2}{|\mu|}\right)}\bydef \delta_{11}^\epsilon(N,\lambda,\mu).\label{eq:delta11}
	\end{align}
\end{lem}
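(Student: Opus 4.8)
\emph{Proof proposal.}
The plan is to derive each of the inequalities \eqref{eq:delta6}--\eqref{eq:delta11} from the corresponding estimate already proved in Lemma~\ref{lem:TruncationError1}, the only new feature being the polynomial factors $l$, $m$, $l+2m+\epsilon$, $l+m+\epsilon$ produced by (formally) differentiating the generating series. In every case I would group the tail sum over the diagonals $l+m=n$, prove a diagonal estimate of the form $\sum_{l+m=n}(\textnormal{coefficient})\,|\lambda|^{\bullet}|\mu|^{\bullet}\le B_n\,(|\lambda|+|\mu|)^n$ for the $a$-series (respectively $B_n\,((|\lambda|+|\lambda|^2)/|\mu|)^n$ for the $d$-series), and then bound the whole tail by a geometric series once the one-step ratio $B_n/B_{n-1}$ has been controlled.

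For the $a_{l,m}$-type sums \eqref{eq:delta6}--\eqref{eq:delta9} I would reindex the $l$-weighted sums by $l'=l-1$ (so the tail condition becomes $l'+m\ge N$, with coefficient $(l'+1)a_{l'+1,m}$) and the $m$-weighted sums by $m'=m-1$; both reindexings yield a $B_n$ satisfying $B_n/B_{n-1}\le\gamma_n$, and the extremal ($m$-weighted) one produces the stated prefactor $(4(N+1))!/(N!((N+1)!)^3)$ when evaluated at the leading diagonal. The diagonal estimate uses the same monotonicity as in Lemma~\ref{lem:TruncationError1}, namely that factorial ratios of the type $(2n+2m+c)!/((m+c')!)^2$ are nondecreasing in $m$ on $0\le m\le n$ so their maxima occur at $m=n$, together with $1/l!\le 1/(l-1)!$-type manipulations that collapse the inner binomial sum to $(|\lambda|+|\mu|)^n$. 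The constant $\gamma_n=64(4+1/n)$ is exactly what the ratio bound gives: writing $B_n/B_{n-1}$ as a product of linear-over-linear factors and grouping them so that $\frac{(4n+2)(4n+3)(4n+4)}{(n+1)^3}\le 64$ and $\frac{4n+1}{n}=4+1/n$. The factors of $b_{l,m}$ and $b_{l,m}^2$ contribute the harmonic-sum bound $|b_{l,m}|\le 4\log4+3+3\log n$ and the ratio correction $\beta_n$ of \eqref{eq:betan}, both already isolated in the proof of Lemma~\ref{lem:TruncationError1}, so that $\gamma_n$ is replaced by $\beta_n\gamma_n$ and $\beta_n^2\gamma_n$; the factor $c_{l,m}$ in \eqref{eq:delta9} is treated crudely by $|c_{l,m}|\le\frac83\pi^2$ as in \eqref{eq:delta4}.

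For the $d$-type sums \eqref{eq:delta10}--\eqref{eq:delta11} the observation is that these are obtained from the series in \eqref{eq:delta5} by applying $\partial_\lambda$ and $-\partial_\mu$, which produce the factors $l+2m+\epsilon$ and $l+m+\epsilon$ and shift a single power of $\lambda$ or $\mu$. I would reuse the Stirling-type bound on $|d_{l+\epsilon,m}|$ already established for \eqref{eq:delta5}---this is the source of the constant $e^3/(2\sqrt2\,\pi)$ and of the $\sqrt{n}$ growth---and use $l+2m+\epsilon\le 2n+\epsilon$ and $l+m+\epsilon=n+\epsilon$ to pull the polynomial weight out of the diagonal at $n=N+1$. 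The residual growth of $2n+\epsilon$ (respectively $n+\epsilon$) along the tail is then absorbed into the enlarged geometric ratio, since the base ratio coming from \eqref{eq:delta5} is $\frac{25}{64}\sqrt{1+1/n}$ and $(2(n+1)+\epsilon)/(2n+\epsilon)=1+\frac{2}{2n+\epsilon}$, which reproduces exactly $\eta_n^\epsilon$ (respectively $1+\frac1{n+\epsilon}$, reproducing $\theta_n^\epsilon$). In all cases the hypothesis \eqref{eq:convergencecondition3} is precisely the requirement that the relevant geometric ratio at the leading index $N+1$ be strictly less than $1$.

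The step I expect to be the main obstacle is the bookkeeping in the ratio estimates: one must check that the corrections coming from (i) the factorial ratio $B_n/B_{n-1}$, (ii) the harmonic-sum factor $b_{l,m}$ and its square, and (iii) the extra polynomial weight introduced by differentiation, combine to give \emph{exactly} $\gamma_n$, $\beta_n\gamma_n$, $\beta_n^2\gamma_n$, $\eta_n^\epsilon$, and $\theta_n^\epsilon$, since any slack lost here would force the convergence regions \eqref{eq:convergencecondition3} to shrink below what is claimed. This boils down to choosing the right groupings of the linear factors (as in the inequality $\frac{(4n+2)(4n+3)(4n+4)}{(n+1)^3}\le 64$ used above) and is essentially the only nonroutine part; everything else is a direct adaptation of the proof of Lemma~\ref{lem:TruncationError1}.
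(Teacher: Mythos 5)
Your proposal is correct and follows essentially the same route as the paper's proof: group the tails by diagonals $l+m=n$, absorb the factors $l$, $m$ into the factorial structure, bound the inner sum by $(|\lambda|+|\mu|)^{n-1}$ with the extremal factorial ratio at $m=n$, and control the tail by a geometric series whose ratio gives exactly $\gamma_{N+1}$, $\beta_{N+1}\gamma_{N+1}$, $\beta_{N+1}^2\gamma_{N+1}$, while the $d$-sums reuse the Stirling/Lagrange machinery of \eqref{eq:delta5} with the weights $l+2m+\epsilon\le 2n+\epsilon$ and $l+m+\epsilon=n+\epsilon$ absorbed into $\eta_{N+1}^\epsilon$, $\theta_{N+1}^\epsilon$. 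Your reindexing $l'=l-1$ (resp.\ $m'=m-1$) is only a cosmetic variant of the paper's direct manipulation, and the constants you identify match those in the paper's argument.
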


\begin{lem}\label{lem:TruncationError3}
	Let $a_{l,m}$, $b_{l,m}$, $c_{l,m}$, and $d_{l,m}$ be the same coefficients as in \eqref{eq:a_lm}, \eqref{eq:b_lm}, \eqref{eq:c_lm}, and \eqref{eq:d_lm}, respectively.
	Let $N\in\N$ be the truncated number and let $\beta_{\revise{n}}$ be the same as that in \eqref{eq:betan}.
	Additionally, for $\epsilon\in\{1/2,1\}$, let $\iota_{\revise{n}}$, $\nu_{\revise{n}}^\epsilon$, $\xi_{\revise{n}}^\epsilon$, $\sigma_{\revise{n}}^\epsilon$ be defined by
	\begin{align}\label{eq:nu_N}
		\iota_{\revise{n}}\bydef64\left(4+\frac5{{\revise{n}}-1}\right),\quad\nu_{\revise{n}}^\epsilon\bydef\frac{25}{64}\left(1+\frac2{2{\revise{n}}+\epsilon}\right)\left(1+\frac2{2{\revise{n}}+\epsilon-1}\right)\sqrt{1+\frac1{\revise{n}}}
	\end{align}
	\begin{align}\label{eq:xi_N_sigma_N}
		\xi_{\revise{n}}^\epsilon\bydef\frac{25}{64}\left(1+\frac2{2{\revise{n}}+\epsilon}\right)\left(1+\frac1{{\revise{n}}+\epsilon}\right)\sqrt{1+\frac1{\revise{n}}},~\sigma_{\revise{n}}^\epsilon\bydef\frac{25}{64}\left(1+\frac1{{\revise{n}}+\epsilon}\right)\left(1+\frac1{{\revise{n}}+\epsilon+1}\right)\sqrt{1+\frac1{\revise{n}}},
	\end{align}
	respectively.
	If the variables $(\lambda,\mu)$ satisfy 
	\begin{align}\label{eq:convergencecondition4}
		\begin{cases}
			\lvert\lambda\rvert+\lvert\mu\rvert<\min\left\{1/{\iota_{\revise{N+1}}},~1/({\beta_{\revise{N+1}}\iota_{\revise{N+1}}}),~1/({\beta_{\revise{N+1}}^2\iota_{\revise{N+1}}})\right\}\\
			(\lvert\lambda\rvert+{\lvert\lambda\rvert}^2)/\lvert\mu\rvert<\min\left\{1/{\nu_{\revise{N+1}}^\epsilon},~1/{\xi_{\revise{N+1}}^\epsilon},~1/{\sigma_{\revise{N+1}}^\epsilon}\right\},
		\end{cases}
	\end{align}
	then inequalities related to second order partial derivatives are bounded by
	\begin{align}
		&\left|\sum_{l+m\ge N+1}l(l-1)a_{l,m}\lambda^{l-2} \mu^m\right|,~\left|\sum_{l+m\ge N+1}lma_{l,m}\lambda^{l-1} \mu^{m-1}\right|,~\left|\sum_{l+m\ge N+1}m(m-1)a_{l,m}\lambda^{l} \mu^{m-2}\right|\\
		&\quad\qquad\qquad\qquad\qquad\qquad\qquad\qquad\qquad\le \frac{(4(N+1))!}{(N-1)!\left((N+1)!\right)^3}\frac{(|\lambda|+|\mu|)^{N-1}}{1-\iota_{\revise{N+1}}(|\lambda|+|\mu|)}\bydef \delta_{12}(N,\lambda,\mu)\label{eq:delta12}\\
		&\left|\sum_{l+m\ge N+1}l(l-1)a_{l,m}b_{l,m}\lambda^{l-2} \mu^m\right|,~\left|\sum_{l+m\ge N+1}lma_{l,m}b_{l,m}\lambda^{l-1} \mu^{m-1}\right|,~\left|\sum_{l+m\ge N+1}m(m-1)a_{l,m}b_{l,m}\lambda^l \mu^{m-2}\right| \\
		&\qquad\qquad\qquad\qquad\le \frac{(4(N+1))!}{(N-1)!\left((N+1)!\right)^3}\frac{\left(4\log 4 + 3 + 3\log (N+1)\right)(|\lambda|+|\mu|)^{N-1}}{1-\beta_{\revise{N+1}}\iota_{\revise{N+1}}(|\lambda|+|\mu|)}\bydef \delta_{13}(N,\lambda,\mu)\label{eq:delta13}\\
		&\left|\sum_{l+m\ge N+1}l(l-1)a_{l,m}b_{l,m}^2\lambda^{l-2} \mu^m\right|,~\left|\sum_{l+m\ge N+1}lma_{l,m}b_{l,m}^2\lambda^{l-1} \mu^{m-1}\right|,~\left|\sum_{l+m\ge N+1}m(m-1)a_{l,m}b_{l,m}^2\lambda^l \mu^{m-2}\right| \\
		&\qquad\qquad\qquad\qquad\le \frac{(4(N+1))!}{(N-1)!\left((N+1)!\right)^3}\frac{\left(4\log 4 + 3 + 3\log (N+1)\right)^2(|\lambda|+|\mu|)^{N-1}}{1-\beta_{\revise{N+1}}^2\iota_{\revise{N+1}}(|\lambda|+|\mu|)}\bydef \delta_{14}(N,\lambda,\mu)\label{eq:delta14}\\
		&\left|\sum_{l+m\ge N+1}l(l-1)a_{l,m}c_{l,m}\lambda^{l-2} \mu^m\right|,~\left|\sum_{l+m\ge N+1}lma_{l,m}c_{l,m}\lambda^{l-1} \mu^{m-1}\right|,~\left|\sum_{l+m\ge N+1}m(m-1)a_{l,m}c_{l,m}\lambda^l \mu^{m-2}\right| \\
		&\qquad\qquad\qquad\qquad\qquad\qquad\qquad\qquad\le \frac{8}{3}\pi^2\frac{(4(N+1))!}{(N-1)!\left((N+1)!\right)^3}\frac{(|\lambda|+|\mu|)^{N-1}}{1-\iota_{\revise{N+1}}(|\lambda|+|\mu|)}\bydef \delta_{15}(N,\lambda,\mu)\label{eq:delta15}\\
		&\left|\sum_{l+m\ge N+1}d_{l+\epsilon,m}(l+2m+\epsilon)(l+2m+\epsilon-1)\frac{\lambda^{l+2m+\epsilon-2}}{\mu^{l+m+\epsilon}}\right| \\
		&\qquad\qquad\le \frac{e^3}{2\sqrt{2}\,\pi}\left|\frac{\lambda}{\mu}\right|^\epsilon|\lambda|^{-2}\frac{\sqrt{N+1}\,\left(2(N+1)+\epsilon\right)\left(2(N+1)+\epsilon-1\right)\left[\frac{25}{64}\left(\frac{|\lambda|+|\lambda|^2}{|\mu|}\right)\right]^{N+1}}{1-\nu_{\revise{N+1}}^\epsilon\left(\frac{|\lambda|+|\lambda|^2}{|\mu|}\right)}\bydef \delta_{16}^\epsilon(N,\lambda,\mu)\label{eq:delta16}\\
		&\left|\sum_{l+m\ge N+1}d_{l+\epsilon,m}(l+2m+\epsilon)(l+m+\epsilon)\frac{\lambda^{l+2m+\epsilon-1}}{\mu^{l+m+\epsilon+1}}\right| \\
		&\quad\qquad\qquad\qquad\le \frac{e^3}{2\sqrt{2}\,\pi}\left|\frac{\lambda}{\mu}\right|^\epsilon|\lambda\mu|^{-1}\frac{\sqrt{N+1}\,(2(N+1)+\epsilon)(N+1+\epsilon)\left[\frac{25}{64}\left(\frac{|\lambda|+|\lambda|^2}{|\mu|}\right)\right]^{N+1}}{1-\xi_{\revise{N+1}}^\epsilon\left(\frac{|\lambda|+|\lambda|^2}{|\mu|}\right)}\bydef \delta_{17}^\epsilon(N,\lambda,\mu)\label{eq:delta17}\\
		&\left|\sum_{l+m\ge N+1}d_{l+\epsilon,m}(l+m+\epsilon)(l+m+\epsilon+1)\frac{\lambda^{l+2m+\epsilon}}{\mu^{l+m+\epsilon+2}}\right| \\
		&\qquad\qquad\qquad\qquad\le \frac{e^3}{2\sqrt{2}\,\pi}\left|\frac{\lambda}{\mu}\right|^\epsilon|\mu|^{-2}\frac{\sqrt{N+1}\,(N+1+\epsilon)(N+2+\epsilon)\left[\frac{25}{64}\left(\frac{|\lambda|+|\lambda|^2}{|\mu|}\right)\right]^{N+1}}{1-\sigma_{\revise{N+1}}^\epsilon\left(\frac{|\lambda|+|\lambda|^2}{|\mu|}\right)}\bydef \delta_{18}^\epsilon(N,\lambda,\mu).\label{eq:delta18}
	\end{align}
\end{lem}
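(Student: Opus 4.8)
The plan is to prove Lemma~\ref{lem:TruncationError3} by running, \emph{mutatis mutandis}, the argument that establishes Lemmas~\ref{lem:TruncationError1} and~\ref{lem:TruncationError2} (both of which we may invoke), the only new feature being that the polynomial prefactors $l(l-1)$, $lm$, $m(m-1)$ are quadratic rather than linear or constant. The first step is a reduction to a one-parameter geometric series: in each left-hand side of \eqref{eq:delta12}--\eqref{eq:delta18} I would pass to absolute values term by term and regroup by the total degree $n=l+m$. Because the quadratic prefactor vanishes whenever its active index equals $0$ or $1$, the shifted monomials $\lambda^{l-2}\mu^m$, $\lambda^{l-1}\mu^{m-1}$, $\lambda^l\mu^{m-2}$ (and their $d$-type analogues) never carry a negative exponent, so each sum is majorised by $\sum_{n\ge N+1}S_n$ with finite diagonal sums $S_n$. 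The target is to bound $S_n\le R_n\,(|\lambda|+|\mu|)^{\,n-2}$ for the $a$-type sums, respectively $S_n\le R_n^\epsilon\,\bigl((|\lambda|+|\lambda|^2)/|\mu|\bigr)^{\,n+1}$ for the $d$-type sums, with $R_n$, $R_n^\epsilon$ explicit, and then to check that $R_{n+1}/R_n$ is dominated, throughout the range of summation, by $\iota_{N+1}$ (resp.\ by $\nu_{N+1}^\epsilon$, $\xi_{N+1}^\epsilon$, $\sigma_{N+1}^\epsilon$ times the relevant rate); summing the resulting geometric series produces exactly the closed forms $\delta_{12},\dots,\delta_{18}$, the hypotheses \eqref{eq:convergencecondition4} being precisely what makes those series converge.

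For the $a$-type sums \eqref{eq:delta12}--\eqref{eq:delta15} the required coefficient estimates are already in place from the proofs of Lemmas~\ref{lem:TruncationError1}--\ref{lem:TruncationError2}: the elementary bounds $0\le c_{l,m}\le 16\sum_{j\ge1}j^{-2}=\tfrac83\pi^2$ and $|b_{l,m}|\le 4\log4+3+3\log(l+m)$, together with the diagonal estimate for $\sum_{l+m=n}a_{l,m}x^l y^m$ used there. Multiplying by the quadratic prefactor and shifting the index by two converts the factor $\tfrac{(4n)!}{(n!)^4}$ into a $\tfrac{(4n)!}{(n-1)!\,((n+1)!)^3}$-type factor, since for $l+m=n$ the quantities $l(l-1)$, $lm$, $m(m-1)$ are absorbed by rewriting $a_{l,m}$ through its index-shifted siblings, whose numerators are products of at most four consecutive integers bounded by $4n$. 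The $\beta_{N+1}$ and $\beta_{N+1}^2$ factors in the denominators of $\delta_{13}$, $\delta_{14}$ arise, exactly as in Lemma~\ref{lem:TruncationError1}, from the ratio $\tfrac{4\log4+3+3\log(n+1)}{4\log4+3+3\log n}=\beta_n$ of consecutive $b$-bounds. What remains is the purely elementary verification that $R_{n+1}/R_n\le\iota_{N+1}=64\bigl(4+\tfrac5N\bigr)$ (and its $\beta$-weighted variants): this ratio is a quotient of products of linear factors that decreases to $256$, so it suffices to evaluate it at the smallest relevant $n$ and confirm monotonicity; this also pins the admissible range to $N\ge1$, as the $(N-1)!$ and $\tfrac5N$ in the statement demand.

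For the $d$-type sums \eqref{eq:delta16}--\eqref{eq:delta18}, writing $d_{l+\epsilon,m}=(-1)^{l+m}\tfrac{\Gamma(l+m+\epsilon)^3}{\Gamma(2l+2\epsilon)\,\Gamma(l+2m+\epsilon+1)\,\Gamma(m+1)}$ and applying two-sided Stirling bounds (upper for the numerator, lower for the denominator) makes the $e$-powers cancel and leaves an estimate of the shape $|d_{l+\epsilon,m}|\le\tfrac{e^3}{2\sqrt2\,\pi}\,(25/64)^{c_n}\,p(l,m)$, where $c_n$ is linear in $n$, $p$ is a polynomial, the constant $\tfrac{e^3}{2\sqrt2\,\pi}$ absorbs the Stirling remainders and the square-root prefactors, and $\tfrac{25}{64}$ is a clean over-estimate of the optimal exponential rate $\max_{t\in[0,1]}\bigl[(2t)^{2t}(2-t)^{2-t}(1-t)^{1-t}\bigr]^{-1}$ forced by the homogeneity relations $l+2m=2n-l$, $l+m=n$. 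Combining this with the geometric weight $\lambda^{l+2m+\epsilon}/\mu^{l+m+\epsilon}=(\lambda/\mu)^\epsilon(\lambda^{2-t}/\mu)^n$ and the quadratic prefactors $(l+2m+\epsilon)(l+2m+\epsilon-1)$, $(l+2m+\epsilon)(l+m+\epsilon)$, $(l+m+\epsilon)(l+m+\epsilon+1)$ yields the diagonal bound $S_n\le R_n^\epsilon\bigl((|\lambda|+|\lambda|^2)/|\mu|\bigr)^{n+1}$; the constants $\nu_{N+1}^\epsilon$, $\xi_{N+1}^\epsilon$, $\sigma_{N+1}^\epsilon$ in \eqref{eq:nu_N}--\eqref{eq:xi_N_sigma_N} are then precisely the products of the two $\bigl(1+\tfrac2{2n+\epsilon}\bigr)$- or $\bigl(1+\tfrac1{n+\epsilon}\bigr)$-type factors coming from the two linear prefactors, times the $\sqrt{1+1/n}$ coming from the $\sqrt{n+1}/\sqrt{n}$ Stirling ratio, evaluated at $n=N+1$; the geometric summation then reduces, as before, to checking that these dominate the per-diagonal ratio throughout the range of summation.

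The main obstacle is this last point for the $d$-type sums: producing a rigorous, closed-form, and not-too-lossy bound for $|d_{l+\epsilon,m}|$ uniform over the entire tail $\{\,l+m\ge N+1\,\}$, because the two-dimensional maximisation over the ray direction $t=l/n$ couples to the $\lambda^{2-t}$ weight, and one must keep the Stirling error terms $e^{1/(12x)}$ (with $x$ possibly as small as $\tfrac12$) under control while still landing on the advertised constant $\tfrac{e^3}{2\sqrt2\,\pi}$ and rate $\tfrac{25}{64}$. The analogous but milder difficulty in the $a$-type case is establishing the diagonal estimate for $\sum_{l+m=n}a_{l,m}x^l y^m$ uniformly in $n$: since the maximum of $a_{l,m}$ over a fixed diagonal is attained in the interior (roughly $l\approx n/16$, $m\approx 15n/16$) rather than at an endpoint, the comparison must exploit the trade-off between the binomial factor $\binom{2l+4m}{l+m}$ and the multinomial factor $\binom{l+3m}{l,m,m,m}$ in $a_{l,m}=\binom{2l+4m}{l+m}\binom{l+3m}{l,m,m,m}$. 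Everything else --- the geometric summation, the monotonicity checks for the ratio bounds, and the bookkeeping that matches each correction factor $\beta$, $\iota$, $\nu$, $\xi$, $\sigma$ to the prefactor it came from --- is mechanical; once these bounds are in hand one verifies, exactly as the paper does for \eqref{eq:convergencecondition}, that \eqref{eq:convergencecondition4} holds at $(\lambda_0,\mu_0)=(2^{-10},2^{-10})$ for $N$ chosen sufficiently large, which is what the subsequent computation requires.
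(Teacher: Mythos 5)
Your plan is essentially the paper's own proof: regroup each tail by the diagonal $n=l+m$, absorb the quadratic prefactors $l(l-1)$, $lm$, $m(m-1)$ into index-shifted factorial quotients (reusing the $b_{l,m}$, $c_{l,m}$ bounds and the diagonal estimate from the proofs of Lemmas~\ref{lem:TruncationError1} and~\ref{lem:TruncationError2}), bound the ratio of consecutive diagonal majorants by $\iota_{N+1}$ and its $\beta_{N+1}$- and $\beta_{N+1}^2$-weighted variants, and rerun the Stirling--Lagrange argument behind $\delta_5^\epsilon$ with the extra linear factors, which is exactly how $\nu_{N+1}^\epsilon$, $\xi_{N+1}^\epsilon$, $\sigma_{N+1}^\epsilon$ arise, so the proposal matches the paper's argument step for step. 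Only two side remarks are off, neither affecting the plan: the per-diagonal factor for the $a$-type sums is $\frac{(4n)!}{(n-2)!\,(n!)^3}$ (hence $\frac{(4(N+1))!}{(N-1)!\,((N+1)!)^3}$ at $n=N+1$), and $\tfrac{25}{64}$ is the exact value of $\max_{t\in[0,1]}\bigl[(4t)^{t}(2-t)^{2-t}\bigr]^{-1}$ attained at $t=2/5$ (i.e.\ $l^\ast=2n/5$), not an over-estimate of the expression you wrote; likewise no interior-maximum analysis of $a_{l,m}$ is needed, since after splitting off the binomial factor $\binom{l+m}{l}$ the remaining factor is maximized at the diagonal endpoint $m=n$.
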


\subsubsection{Error bounds for \boldmath$\varphi_1$\boldmath}
Using Lemmas \ref{lem:TruncationError1}, \ref{lem:TruncationError2}, and \ref{lem:TruncationError3}, we derive the bounds $\varepsilon^1$, $\varepsilon^1_\lambda$, $\varepsilon^1_\mu$, $\varepsilon^1_{\lambda\lambda}$, $\varepsilon^1_{\mu\lambda}$, and $\varepsilon^1_{\mu\mu}$, which satisfy \eqref{eq:epsilon}, \eqref{eq:epsilon_lam}, \eqref{eq:epsilon_mu}, \eqref{eq:epsilon_lamlam}, \eqref{eq:epsilon_mulam}, and \eqref{eq:epsilon_mumu}, respectively, with $k=1$.


\begin{thm}\label{thm:TruncationError_phi1}
	Let the truncation number  be $N\in\N$. Denote by $\varphi_1$ the particular solution of \eqref{eq:PicardFuchs}, as defined in \eqref{eq:phi1}, and let $\varphi_1^{(N)}(\lambda_0,\mu_0)$ represent the value of the truncated function, as defined in \eqref{eq:phi1_N}, evaluated at  $(\lambda_0,\mu_0)$.
	Let $\gamma_{\revise{n}}$ and $\iota_{\revise{n}}$ be as defined in \eqref{eq:gam_eta_theta} and \eqref{eq:nu_N}, respectively.
	If $(\lambda_0,\mu_0)$ satisfy 
	\begin{align}\label{eq:convergencecondition_phi1}
		|\lambda_0|+|\mu_0|<\min\left\{\frac1{256},~\frac1{\gamma_{\revise{N+1}}},~\frac1{\iota_{\revise{N+1}}}\right\},
	\end{align}
	then the bounds $\varepsilon^1$, $\varepsilon^1_\lambda$, $\varepsilon^1_\mu$, $\varepsilon^1_{\lambda\lambda}$, $\varepsilon^1_{\lambda\mu}$, $\varepsilon^1_{\mu\mu}$  are given by
	\begin{align}
		\varepsilon^1 &\bydef \delta_1(N,\lambda_0,\mu_0)\\
		\varepsilon^1_\lambda=\varepsilon^1_\mu&\bydef \delta_6(N,\lambda_0,\mu_0)\\
		\varepsilon^1_{\lambda\lambda}=\varepsilon^1_{\mu\lambda}=\varepsilon^1_{\mu\mu}&\bydef \delta_{12}(N,\lambda_0,\mu_0).
	\end{align}
\end{thm}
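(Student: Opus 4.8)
The plan is to recognize that Theorem~\ref{thm:TruncationError_phi1} is essentially a corollary of Lemmas~\ref{lem:TruncationError1}, \ref{lem:TruncationError2}, and~\ref{lem:TruncationError3}: the series \eqref{eq:phi1} for $\varphi_1$ carries only the coefficients $a_{l,m}$, so each of the six truncation errors in the statement is one of the pure $a_{l,m}$-tails already estimated in those lemmas, and no term involving $b_{l,m}$, $c_{l,m}$, or $d_{l,m}$ ever appears.

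First I would write the six differences explicitly. Since $\varphi_1^{(N)}(\lambda,\mu)=\sum_{0\le l+m\le N}a_{l,m}\lambda^l\mu^m$ and differentiation commutes with this finite sum,
\begin{align*}
\varphi^1-\varphi_1^{(N)}(\lambda_0,\mu_0)&=\sum_{l+m\ge N+1}a_{l,m}\lambda_0^l\mu_0^m,\\
\partial_\lambda\varphi_1(\lambda_0,\mu_0)-\partial_\lambda\varphi_1^{(N)}(\lambda_0,\mu_0)&=\sum_{l+m\ge N+1}l\,a_{l,m}\lambda_0^{l-1}\mu_0^m,\\
\partial_\mu\varphi_1(\lambda_0,\mu_0)-\partial_\mu\varphi_1^{(N)}(\lambda_0,\mu_0)&=\sum_{l+m\ge N+1}m\,a_{l,m}\lambda_0^{l}\mu_0^{m-1},
\end{align*}
and likewise the three second-order errors are $\sum_{l+m\ge N+1}l(l-1)a_{l,m}\lambda_0^{l-2}\mu_0^m$, $\sum_{l+m\ge N+1}lm\,a_{l,m}\lambda_0^{l-1}\mu_0^{m-1}$, and $\sum_{l+m\ge N+1}m(m-1)a_{l,m}\lambda_0^{l}\mu_0^{m-2}$. (Under the hypothesis these tail series converge absolutely, so the triangle inequality applies and each difference is literally the tail.) Then I would invoke the lemmas term by term: \eqref{eq:delta1} bounds the first tail by $\delta_1(N,\lambda_0,\mu_0)$; \eqref{eq:delta6} bounds \emph{both} first-order tails by $\delta_6(N,\lambda_0,\mu_0)$; and \eqref{eq:delta12} bounds \emph{all three} second-order tails by $\delta_{12}(N,\lambda_0,\mu_0)$. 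These are precisely the asserted formulas for $\varepsilon^1$, $\varepsilon^1_\lambda=\varepsilon^1_\mu$, and $\varepsilon^1_{\lambda\lambda}=\varepsilon^1_{\mu\lambda}=\varepsilon^1_{\mu\mu}$.

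The only delicate point — and the step I expect to require the most care in the writeup — is the bookkeeping of hypotheses. Lemmas~\ref{lem:TruncationError1}--\ref{lem:TruncationError3} are stated under the full convergence conditions \eqref{eq:convergencecondition2}, \eqref{eq:convergencecondition3}, \eqref{eq:convergencecondition4}, which carry extra $\beta$-factors and ratio constraints on $(|\lambda|+|\lambda|^2)/|\mu|$ that are needed only for the $b_{l,m}$- and $d_{l+\epsilon,m}$-tails (\eqref{eq:delta2}--\eqref{eq:delta5}, \eqref{eq:delta7}--\eqref{eq:delta11}, \eqref{eq:delta13}--\eqref{eq:delta18}). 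For the three estimates actually used here, only the parts keeping the geometric-series denominators positive enter, namely $256(|\lambda_0|+|\mu_0|)<1$ for $\delta_1$, $\gamma_{N+1}(|\lambda_0|+|\mu_0|)<1$ for $\delta_6$, and $\iota_{N+1}(|\lambda_0|+|\mu_0|)<1$ for $\delta_{12}$; this is visible directly from the proofs of \eqref{eq:delta1}, \eqref{eq:delta6}, \eqref{eq:delta12} in Appendix~\ref{sec:appendix_proof}, where the $\beta$-factors and ratio conditions play no role. I would then observe that the hypothesis \eqref{eq:convergencecondition_phi1}, $|\lambda_0|+|\mu_0|<\min\{1/256,\,1/\gamma_{N+1},\,1/\iota_{N+1}\}$, supplies exactly these three inequalities simultaneously (and note in passing that $\beta_{N+1}\ge 1$ makes the first one also imply $256\beta_{N+1}^2(|\lambda_0|+|\mu_0|)<1$, while $\gamma_{N+1},\iota_{N+1}>256$ make $1/256$ the non-binding term). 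Assembling these observations finishes the proof; the only remaining check is the numerical one, that \eqref{eq:convergencecondition_phi1} indeed holds at $(\lambda_0,\mu_0)=(2^{-10},2^{-10})$ for the truncation number $N$ used in the computation.
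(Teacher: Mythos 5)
Your proposal is correct and follows essentially the same route as the paper: each of the six errors is written as the pure $a_{l,m}$-tail of the corresponding series, and then \eqref{eq:delta1}, \eqref{eq:delta6}, and \eqref{eq:delta12} are invoked to give $\varepsilon^1$, $\varepsilon^1_\lambda=\varepsilon^1_\mu$, and $\varepsilon^1_{\lambda\lambda}=\varepsilon^1_{\mu\lambda}=\varepsilon^1_{\mu\mu}$. Your extra bookkeeping remark — that only the geometric-ratio conditions $256(|\lambda_0|+|\mu_0|)<1$, $\gamma_{N+1}(|\lambda_0|+|\mu_0|)<1$, $\iota_{N+1}(|\lambda_0|+|\mu_0|)<1$ are needed, which is exactly hypothesis \eqref{eq:convergencecondition_phi1} — is consistent with how the paper states and proves the theorem.
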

\begin{proof}
	From \eqref{eq:phi1}, \eqref{eq:phi1_N}, and \eqref{eq:delta1} in Lemma~\ref{lem:TruncationError1}, the truncated error bound $\varepsilon^1$ is obtained by
	\begin{align}
		\left|\varphi^1-\varphi_1^{(N)}(\lambda_0,\mu_0)\right|=\left|\sum_{l+m\ge N+1}a_{l,m}\lambda_0^l\mu_0^m\right|\le\delta_1(N,\lambda_0,\mu_0)=\varepsilon^1.
	\end{align}

	Starting from \eqref{eq:phi_1_lam}, we derive the error bound at $(\lambda_0,\mu_0)$ using \eqref{eq:delta6} in Lemma~\ref{lem:TruncationError2}
	\begin{align}
		\left|\partial_\lambda\varphi_1(\lambda_0,\mu_0)-\partial_\lambda\varphi_1^{(N)}(\lambda_0,\mu_0)\right|=\left|\sum_{l+m\ge N+1}la_{l,m}\lambda_0^{l-1} \mu_0^m\right|\le \delta_6(N,\lambda_0,\mu_0)=\varepsilon^1_\lambda.
	\end{align}
	Similarly, the $\varepsilon^1_\mu$ bound is obtained from \eqref{eq:phi_1_mu} and \eqref{eq:delta6}
	\begin{align}
		\left|\partial_\mu\varphi_1(\lambda_0,\mu_0)-\partial_\mu\varphi_1^{(N)}(\lambda_0,\mu_0)\right|=\left|\sum_{l+m\ge N+1}ma_{l,m}\lambda_0^{l} \mu_0^{m-1}\right| \le \delta_6(N,\lambda_0,\mu_0)=\varepsilon^1_\mu.
	\end{align}
	
	Furthermore, $\varepsilon^1_{\lambda\lambda}$, $\varepsilon^1_{\mu\lambda}$, and $\varepsilon^1_{\mu\mu}$ bounds are given from \eqref{eq:phi_1_lamlam}, \eqref{eq:phi_1_mulam}, and \eqref{eq:phi_1_mumu} using \eqref{eq:delta12} in Lemma~\ref{lem:TruncationError3}
	\begin{align}
		\left|\partial_{\lambda\lambda}\varphi_1(\lambda_0,\mu_0)-\partial_{\lambda\lambda}\varphi_1^{(N)}(\lambda_0,\mu_0)\right|&=	\left|\sum_{l+m\ge N+1}l(l-1)a_{l,m}\lambda_0^{l-2} \mu_0^m\right|\le\delta_{12}(N,\lambda_0,\mu_0)=\varepsilon^1_{\lambda\lambda}\\
		\left|\partial_{\mu\lambda}\varphi_1(\lambda_0,\mu_0)-\partial_{\mu\lambda}\varphi_1^{(N)}(\lambda_0,\mu_0)\right|&=\left|\sum_{l+m\ge N+1}lma_{l,m}\lambda_0^{l-1} \mu_0^{m-1}\right|\le\delta_{12}(N,\lambda_0,\mu_0)=\varepsilon^1_{\mu\lambda}\\
		\left|\partial_{\mu\mu}\varphi_1(\lambda_0,\mu_0)-\partial_{\mu\mu}\varphi_1^{(N)}(\lambda_0,\mu_0)\right|&=\left|\sum_{l+m\ge N+1}m(m-1)a_{l,m}\lambda_0^{l} \mu_0^{m-2}\right|\le\delta_{12}(N,\lambda_0,\mu_0)=\varepsilon^1_{\mu\mu}.\qedhere
	\end{align}
\end{proof}

\subsubsection{Error bounds for \boldmath$\varphi_2$\boldmath}

The bounds $\varepsilon^2$, $\varepsilon^2_\lambda$, $\varepsilon^2_\mu$, $\varepsilon^2_{\lambda\lambda}$, $\varepsilon^2_{\mu\lambda}$, and $\varepsilon^2_{\mu\mu}$ are derived using Lemmas \ref{lem:TruncationError1}, \ref{lem:TruncationError2}, and \ref{lem:TruncationError3}, as presented in Section~\ref{sec:fundamental_lemmas}. These bounds provide rigorous estimates for the truncation errors associated with the fundamental solution $\varphi_2$ and its derivatives.

%
\begin{thm}\label{thm:TruncationError_phi2}
	Let the truncation number  be $N\in\N$. Denote by $\varphi_2$ the particular solution of \eqref{eq:PicardFuchs}, as defined in \eqref{eq:phi2}, and let $\varphi_2^{(N)}(\lambda_0,\mu_0)$ represent the value of truncated function, as defined in \eqref{eq:phi2_N}, evaluated at $(\lambda_0,\mu_0)$.
	If $(\lambda_0,\mu_0)$ satisfy the conditions \eqref{eq:convergencecondition2}, \eqref{eq:convergencecondition3}, and \eqref{eq:convergencecondition4}, then the truncation error bounds $\varepsilon^2$, $\varepsilon^2_\lambda$, $\varepsilon^2_\mu$, $\varepsilon^2_{\lambda\lambda}$, $\varepsilon^2_{\lambda\mu}$, and $\varepsilon^2_{\mu\mu}$ are given as follows:
		\begin{align}
			\varepsilon^2 &\bydef \frac1{2\pi^2}\left(\left(\log\mu_0\right)^2\delta_1+2\left|\log\mu_0\right|\delta_2+\delta_3+\delta_4+\frac12\delta_5^1\right)\\
			\varepsilon^2_\lambda&\bydef\frac1{2\pi^2}\left(\left(\log\mu_0\right)^2\delta_6+2\left|\log\mu_0\right|\delta_7+\delta_8+\delta_9+\frac12\delta_{10}^1\right)\\
			\varepsilon^2_\mu&\bydef\frac1{2\pi^2}\left(2\left|\frac{\log\mu_0}{\mu_0}\right|\delta_1+\frac2{|\mu_0|}\delta_2+\left(\log\mu_0\right)^2\delta_6+2\left|\log\mu_0\right|\delta_7+\delta_8+\delta_9+\frac12\delta_{11}^1\right)\\
			\varepsilon^2_{\lambda\lambda}&\bydef\frac1{2\pi^2}\left(\left(\log\mu_0\right)^2\delta_{12}+2\left|\log\mu_0\right|\delta_{13}+\delta_{14}+\delta_{15}+\frac12\delta_{16}^1\right)\\
			\varepsilon^2_{\mu\lambda}&\bydef\frac1{2\pi^2}\left(2\left|\frac{\log\mu_0}{\mu_0}\right|\delta_6+\frac2{|\mu_0|}\delta_7+\left(\log\mu_0\right)^2\delta_{12}+2\left|\log\mu_0\right|\delta_{13}+\delta_{14}+\delta_{15}+\frac12\delta_{17}^1\right)\\
			\varepsilon^2_{\mu\mu}&\bydef\frac1{2\pi^2}\left(\frac2{\mu_0^2}\delta_1+4\left|\frac{\log\mu_0}{\mu_0}\right|\delta_6+\frac4{|\mu_0|}\delta_7+\left(\log\mu_0\right)^2\delta_{12}+2\left|\log\mu_0\right|\delta_{13}+\delta_{14}+\delta_{15}+\frac12\delta_{18}^1\right),
		\end{align}
		where $\delta_i\equiv\delta_i(N,\lambda_0,\mu_0)$ defined in Lemmas \ref{lem:TruncationError1}, \ref{lem:TruncationError2}, and \ref{lem:TruncationError3}.
	\end{thm}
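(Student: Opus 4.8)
The plan is to follow the same template as the proof of Theorem~\ref{thm:TruncationError_phi1}, only with more terms to track. For each of the six quantities at issue — $\varphi_2$ itself, together with $\partial_\lambda\varphi_2$, $\partial_\mu\varphi_2$, $\partial_{\lambda\lambda}\varphi_2$, $\partial_{\mu\lambda}\varphi_2$, $\partial_{\mu\mu}\varphi_2$ — I would first observe that the relevant truncation error is nothing but the tail, over the indices $l+m\ge N+1$, of the corresponding series: for \eqref{eq:epsilon} this is the tail of \eqref{eq:phi2}, for \eqref{eq:epsilon_lam} the tail of \eqref{eq:phi_2_lam}, and so on through \eqref{eq:epsilon_mumu} and \eqref{eq:phi_2_mumu}, all evaluated at $(\lambda_0,\mu_0)$. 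So the task reduces entirely to bounding these tails.

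The single algebraic manipulation needed is to expand the squared logarithmic factor, $(\log\mu_0+b_{l,m})^2=(\log\mu_0)^2+2(\log\mu_0)\,b_{l,m}+b_{l,m}^2$, inside each tail and distribute. After this, each tail becomes a finite linear combination — with scalar weights assembled from powers of $\log\mu_0$ and of $\mu_0^{-1}$ — of sums of exactly the shapes estimated in Lemmas~\ref{lem:TruncationError1}, \ref{lem:TruncationError2}, and \ref{lem:TruncationError3}: for $\varphi_2$, the four sums carrying $a_{l,m}$, $a_{l,m}b_{l,m}$, $a_{l,m}b_{l,m}^2$, $a_{l,m}c_{l,m}$ against $\lambda_0^l\mu_0^m$, plus the $d$-tail with exponent $l+2m+1$ over $l+m+1$; for the first derivatives, the $l$- or $m$-weighted analogues with one power of $\lambda_0$ or $\mu_0$ stripped; and for the second derivatives, the $l(l-1)$-, $lm$-, or $m(m-1)$-weighted analogues with two powers stripped — in each case together with the appropriate $d$-tail. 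When differentiation leaves a monomial such as $\mu_0^{m-1}$, $\mu_0^{m-2}$, $\lambda_0^{l-1}$, or $\lambda_0^{l-2}$ that does not literally match a lemma form, one pulls the index-free scalar ($|\mu_0|^{-1}$, $|\mu_0|^{-2}$, $|\lambda_0|^{-1}$, $|\lambda_0|^{-2}$) out in front of the sum. Applying the triangle inequality term by term and invoking $\delta_1$–$\delta_5^1$ for $\varepsilon^2$, $\delta_6$–$\delta_{11}^1$ for $\varepsilon^2_\lambda$ and $\varepsilon^2_\mu$, and $\delta_{12}$–$\delta_{18}^1$ for $\varepsilon^2_{\lambda\lambda}$, $\varepsilon^2_{\mu\lambda}$, $\varepsilon^2_{\mu\mu}$, then collecting the scalar prefactors, yields the stated bounds. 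Note that only $\epsilon=1$ arises, since every $d$-term in \eqref{eq:phi2} and its derivatives has $\mu$-exponent $l+m+1$ (up to an integer shift) against $\lambda$-exponent $l+2m+1$.

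Before invoking the lemmas I would check that their hypotheses are in force: conditions \eqref{eq:convergencecondition2}, \eqref{eq:convergencecondition3}, and \eqref{eq:convergencecondition4} assumed in the statement are verbatim the hypotheses of Lemmas~\ref{lem:TruncationError1}, \ref{lem:TruncationError2}, and \ref{lem:TruncationError3}, respectively, so each $\delta_i(N,\lambda_0,\mu_0)$ is finite and positive (its geometric denominator is strictly positive) and no further justification is needed.

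The step I expect to be the main obstacle is the combinatorial bookkeeping in the second-order cases, above all $\partial_{\mu\mu}\varphi_2$ in \eqref{eq:phi_2_mumu}, whose summand carries the index-dependent prefactor $2+(4m-2)(\log\mu_0+b_{l,m})+m(m-1)[(\log\mu_0+b_{l,m})^2-c_{l,m}]$ against $\lambda_0^l\mu_0^{m-2}$. Here one must split the factor $4m-2$ and the two surplus powers of $\mu_0$ so that each resulting tail lands against precisely the right $m(m-1)$-, $m$-, or unweighted lemma bound with the correct power reduction; the same care is required for the mixed monomials $\lambda_0^{l-1}\mu_0^{m-1}$ in \eqref{eq:phi_2_mulam}. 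This accounting is routine but error-prone, and it is where the precise numerical coefficients in the six $\varepsilon^2$-bounds get pinned down; everything else is a direct transcription of the three lemmas.
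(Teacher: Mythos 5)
Your proposal follows the paper's proof essentially verbatim: expand $(\log\mu_0+b_{l,m})^2$, apply the triangle inequality to the tails of \eqref{eq:phi2} and of \eqref{eq:phi_2_lam}--\eqref{eq:phi_2_mumu}, pull the scalar factors $|\mu_0|^{-1}$, $|\mu_0|^{-2}$ out of those sums whose monomials do not literally match a lemma form, and then invoke Lemmas \ref{lem:TruncationError1}, \ref{lem:TruncationError2}, and \ref{lem:TruncationError3} term by term with $\epsilon=1$, exactly as the paper does. The only imprecision is your listing of which $\delta$'s get used: the mixed bounds also draw on lower-order quantities ($\delta_1,\delta_2$ in $\varepsilon^2_\mu$; $\delta_6,\delta_7$ in $\varepsilon^2_{\mu\lambda}$; $\delta_1,\delta_6,\delta_7$ in $\varepsilon^2_{\mu\mu}$), which is precisely what your own pull-out-the-scalar rule produces, so the stated formulas come out as claimed.
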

	\begin{proof}
		We begin by considering the $\varepsilon^2$ bound. From \eqref{eq:phi2}, \eqref{eq:phi2_N}, and \eqref{eq:delta1}--\eqref{eq:delta5} in Lemma~\ref{lem:TruncationError1}, the truncated error bound $\varepsilon^2$ is obtained by
		\begin{align}
			&\left|\varphi^2-\varphi_2^{(N)}(\lambda_0,\mu_0)\right|=\frac{1}{2\pi^2}\left|\sum_{l+m\ge N+1}a_{l,m}\left[(\log\mu_0+b_{l,m})^2-c_{l,m}\right]\lambda_0^l\mu_0^m+\frac{1}{2} d_{l+1,m}\frac{\lambda_0^{l+2m+1}}{\mu_0^{l+m+1}}\right|\\
			&\le\frac{1}{2\pi^2}\left(\left|\sum_{l+m\ge N+1}a_{l,m}\left(\log\mu_0\right)^2\lambda_0^l \mu_0^m\right|
			+ \left|\sum_{l+m\ge N+1}a_{l,m}(2\log\mu_0)b_{l,m}\lambda_0^l \mu_0^m\right|
			+ \left|\sum_{l+m\ge N+1}a_{l,m}b_{l,m}^2\lambda_0^l \mu_0^m\right|\right.\\
			&\hphantom{\le\frac{1}{2\pi^2}\Bigg(}\left.\quad+ \left|\sum_{l+m\ge N+1}a_{l,m}c_{l,m}\lambda_0^l \mu_0^m\right| 
			+ \left|\sum_{l+m\ge N+1}\frac12d_{l+1,m}\frac{\lambda_0^{l+2m+1}}{\mu_0^{l+m+1}}\right| 
			\right)\\
			&\le \frac1{2\pi^2}\left(\left(\log\mu_0\right)^2\delta_1(N,\lambda_0,\mu_0)+2\left|\log\mu_0\right|\delta_2(N,\lambda_0,\mu_0)+\delta_3(N,\lambda_0,\mu_0)+\delta_4(N,\lambda_0,\mu_0)+\frac12\delta_5^1(N,\lambda_0,\mu_0)\right)=\varepsilon^2.
		\end{align}
		
		Next, $\varepsilon^2_\lambda$ bound is obtained by the form \eqref{eq:phi_2_lam} and \eqref{eq:delta6}--\eqref{eq:delta10} in Lemma~\ref{lem:TruncationError2}
		\begin{align}
			&\left|\partial_\lambda\varphi_2(\lambda_0,\mu_0)-\partial_\lambda\varphi_2^{(N)}(\lambda_0,\mu_0)\right|\\
			&=
			\frac{1}{2\pi^2}\left|\sum_{l+m\ge N+1}la_{l,m}\left[(\log\mu_0+b_{l,m})^2-c_{l,m}\right]\lambda_0^{l-1}\mu_0^m+\frac{1}{2} d_{l+1,m}\left(l+2m+1\right)\frac{\lambda_0^{l+2m}}{\mu_0^{l+m+1}}\right|\\
			&\le\frac{1}{2\pi^2}\left(\left|\sum_{l+m\ge N+1}la_{l,m}\left(\log\mu_0\right)^2\lambda_0^{l-1} \mu_0^m\right|
			+ \left|\sum_{l+m\ge N+1}la_{l,m}(2\log\mu_0)b_{l,m}\lambda_0^{l-1} \mu_0^m\right|
			+ \left|\sum_{l+m\ge N+1}la_{l,m}b_{l,m}^2\lambda_0^{l-1} \mu_0^m\right|\right.\\
			&\hphantom{\le\frac{1}{2\pi^2}\Bigg(}\left.\quad+ \left|\sum_{l+m\ge N+1}la_{l,m}c_{l,m}\lambda_0^{l-1} \mu_0^m\right| 
			+ \left|\sum_{l+m\ge N+1}\frac12d_{l+1,m}(l+2m+1)\frac{\lambda_0^{l+2m}}{\mu_0^{l+m+1}}\right| 
			\right)\\
			&\le\frac1{2\pi^2}\left(\left(\log\mu_0\right)^2\delta_6(N,\lambda_0,\mu_0)+2\left|\log\mu_0\right|\delta_7(N,\lambda_0,\mu_0)+\delta_8(N,\lambda_0,\mu_0)+\delta_9(N,\lambda_0,\mu_0)+\frac12\delta_{10}^1(N,\lambda_0,\mu_0)\right)=\varepsilon^2_\lambda.
		\end{align}
		Similarly, using Lemmas~\ref{lem:TruncationError1} and \ref{lem:TruncationError2}, we have the $\varepsilon^2_\mu$ bound from \eqref{eq:phi_2_mu}, \eqref{eq:delta1}, \eqref{eq:delta2}, \eqref{eq:delta6}--\eqref{eq:delta9}, and  \eqref{eq:delta11}
		\begin{align}
			&\left|\partial_\mu\varphi_2(\lambda_0,\mu_0)-\partial_\mu\varphi_2^{(N)}(\lambda_0,\mu_0)\right|\\
			&= \frac{1}{2\pi^2}\left|\sum_{l+m\ge N+1}a_{l,m}\left\{2(\log\mu_0 + b_{l,m})+m\left[(\log\mu_0+b_{l,m})^2-c_{l,m}\right]\right\}\lambda_0^l\mu^{m-1} -\frac{1}{2} d_{l+1,m}\left(l+m+1\right)\frac{\lambda_0^{l+2m+1}}{\mu_0^{l+m+2}}\right|\\
			&\le\frac{1}{2\pi^2}\left(
			\left|\sum_{l+m\ge N+1}a_{l,m}\left(2\log\mu_0\right)\lambda_0^l \mu_0^{m-1}\right|
			+\left|\sum_{l+m\ge N+1}2a_{l,m}b_{l,m}\lambda_0^l \mu_0^{m-1}\right|
			+\left|\sum_{l+m\ge N+1}ma_{l,m}\left(\log\mu_0\right)^2\lambda_0^{l} \mu_0^{m-1}\right|\right.\\
			&\hphantom{\le\frac{1}{2\pi^2}\Bigg(}\quad+ \left|\sum_{l+m\ge N+1}ma_{l,m}(2\log\mu_0)b_{l,m}\lambda_0^{l} \mu_0^{m-1}\right|
			+ \left|\sum_{l+m\ge N+1}ma_{l,m}b_{l,m}^2\lambda_0^{l} \mu_0^{m-1}\right|\\
			&\hphantom{\le\frac{1}{2\pi^2}\Bigg(}\left.\quad+ \left|\sum_{l+m\ge N+1}ma_{l,m}c_{l,m}\lambda_0^{l} \mu_0^{m-1}\right|
			+ \left|\sum_{l+m\ge N+1}\frac12d_{l+1,m}(l+m+1)\frac{\lambda_0^{l+2m+1}}{\mu_0^{l+m+2}}\right| 
			\right)\\
			&\le \frac1{2\pi^2}\left(2\left|\frac{\log\mu_0}{\mu_0}\right|\delta_1(N,\lambda_0,\mu_0)+\frac2{|\mu_0|}\delta_2(N,\lambda_0,\mu_0)+\left(\log\mu_0\right)^2\delta_6(N,\lambda_0,\mu_0)+2\left|\log\mu_0\right|\delta_7(N,\lambda_0,\mu_0)\right.\\
			&\hphantom{\le \frac1{2\pi^2}\big(}\quad\left.+\delta_8(N,\lambda_0,\mu_0)+\delta_9(N,\lambda_0,\mu_0)+\frac12\delta_{11}^1(N,\lambda_0,\mu_0)\right)=\varepsilon^2_\mu.
		\end{align}

		Furthermore, the $\varepsilon^2_{\lambda\lambda}$ bound is derived from the form \eqref{eq:phi_2_lamlam} and \eqref{eq:delta12}--\eqref{eq:delta16} in Lemma~\ref{lem:TruncationError3}
		\begin{align}
			&\left|\partial_{\lambda\lambda}\varphi_2(\lambda_0,\mu_0)-\partial_{\lambda\lambda}\varphi_2^{(N)}(\lambda_0,\mu_0)\right|\\
			&=\frac{1}{2\pi^2}\left|\sum_{l+m\ge N+1}l(l-1)a_{l,m}\left[(\log\mu_0+b_{l,m})^2-c_{l,m}\right]\lambda_0^{l-2}\mu_0^m +\frac{1}{2} d_{l+1,m}\left(l+2m+1\right) \left(l+2m\right)\frac{\lambda_0^{l+2m-1}}{\mu_0^{l+m+1}}\right|\\
			&\le\frac{1}{2\pi^2}\left(\left|\sum_{l+m\ge N+1}l(l-1)a_{l,m}\left(\log\mu_0\right)^2\lambda_0^{l-2} \mu_0^m\right|
			+ \left|\sum_{l+m\ge N+1}l(l-1)a_{l,m}(2\log\mu_0)b_{l,m}\lambda_0^{l-2} \mu_0^m\right|\right.\\
			&\hphantom{\le\frac{1}{2\pi^2}\Bigg(}\quad+ \left|\sum_{l+m\ge N+1}l(l-1)a_{l,m}b_{l,m}^2\lambda_0^{l-2} \mu_0^m\right|
			+ \left|\sum_{l+m\ge N+1}l(l-1)a_{l,m}c_{l,m}\lambda_0^{l-2} \mu_0^m\right| \\
			&\hphantom{\le\frac{1}{2\pi^2}\Bigg(}\left.\quad+ \left|\sum_{l+m\ge N+1}\frac12d_{l+1,m}(l+2m+1)(l+2m)\frac{\lambda_0^{l+2m-1}}{\mu_0^{l+m+1}}\right| \right)\\
			&\le \frac1{2\pi^2}\left(\left(\log\mu_0\right)^2\delta_{12}(N,\lambda_0,\mu_0)+2\left|\log\mu_0\right|\delta_{13}(N,\lambda_0,\mu_0)+\delta_{14}(N,\lambda_0,\mu_0)+\delta_{15}(N,\lambda_0,\mu_0)+\frac12\delta_{16}^1(N,\lambda_0,\mu_0)\right)\\
			&=\varepsilon^2_{\lambda\lambda}.
		\end{align}
		A similar argument yields the $\varepsilon^2_{\mu\lambda}$ bound, derived from the form \eqref{eq:phi_2_mulam}, \eqref{eq:delta6} and \eqref{eq:delta7} in Lemma~\ref{lem:TruncationError2}, and \eqref{eq:delta12}--\eqref{eq:delta15} and \eqref{eq:delta17} ($\epsilon=1$) in Lemma~\ref{lem:TruncationError3}.
		\begin{align}
			&\left|\partial_{\mu\lambda}\varphi_2(\lambda_0,\mu_0)-\partial_{\mu\lambda}\varphi_2^{(N)}(\lambda_0,\mu_0)\right|\\
			&=\frac{1}{2\pi^2}\left|\sum_{l+m\ge N+1}la_{l,m}\left\{2(\log\mu_0 + b_{l,m})+m\left[(\log\mu_0+b_{l,m})^2-c_{l,m}\right]\right\}\lambda_0^{l-1}\mu_0^{m-1} \right.\\
			&\hphantom{=\frac{1}{2\pi^2}\Bigg|\sum_{l,m\ge0}}\left.\quad -\frac{1}{2} d_{l+1,m}\left(l+m+1\right) \left(l+2m+1\right)\frac{\lambda_0^{l+2m}}{\mu_0^{l+m+2}}\right|\\
			&\le\frac{1}{2\pi^2}\left(
			\left|\sum_{l+m\ge N+1}la_{l,m}\left(2\log\mu_0\right)\lambda_0^{l-1} \mu_0^{m-1}\right|
			+\left|\sum_{l+m\ge N+1}2la_{l,m}b_{l,m}\lambda_0^{l-1} \mu_0^{m-1}\right|\right.\\
			&\hphantom{\le\frac{1}{2\pi^2}\Bigg(}\quad+\left|\sum_{l+m\ge N+1}lma_{l,m}\left(\log\mu_0\right)^2\lambda_0^{l-1} \mu_0^{m-1}\right|
			+ \left|\sum_{l+m\ge N+1}lma_{l,m}(2\log\mu_0)b_{l,m}\lambda_0^{l-1} \mu_0^{m-1}\right|\\
			&\hphantom{\le\frac{1}{2\pi^2}\Bigg(}\quad+ \left|\sum_{l+m\ge N+1}lma_{l,m}b_{l,m}^2\lambda_0^{l-1} \mu_0^{m-1}\right|
			+ \left|\sum_{l+m\ge N+1}lma_{l,m}c_{l,m}\lambda_0^{l-1} \mu_0^{m-1}\right|\\
			&\hphantom{\le\frac{1}{2\pi^2}\Bigg(}\left.\quad
			+ \left|\sum_{l+m\ge N+1}\frac12d_{l+1,m}(l+m+1)(l+2m+1)\frac{\lambda_0^{l+2m}}{\mu_0^{l+m+2}}\right| 
			\right)\\
			&\le \frac1{2\pi^2}\left(2\left|\frac{\log\mu_0}{\mu_0}\right|\delta_6(N,\lambda_0,\mu_0)+\frac2{|\mu_0|}\delta_7(N,\lambda_0,\mu_0)+\left(\log\mu_0\right)^2\delta_{12}(N,\lambda_0,\mu_0)+2\left|\log\mu_0\right|\delta_{13}(N,\lambda_0,\mu_0)\right.\\
			&\hphantom{\le \frac1{2\pi^2}\Bigg(}\left.+\delta_{14}(N,\lambda_0,\mu_0)+\delta_{15}(N,\lambda_0,\mu_0)+\frac12\delta_{17}^1(N,\lambda_0,\mu_0)\right)=\varepsilon^2_{\mu\lambda}.
		\end{align}
		Finally, the $\varepsilon^2_{\mu\mu}$ bound is derived using \eqref{eq:phi_2_mumu}, \eqref{eq:delta1} in Lemma~\ref{lem:TruncationError1}, \eqref{eq:delta6} and \eqref{eq:delta7} in Lemma~\ref{lem:TruncationError2}, 
		and \eqref{eq:delta12}--\eqref{eq:delta15} and \eqref{eq:delta18} ($\epsilon=1$) in Lemma~\ref{lem:TruncationError3}.
		\begin{align}
			&\left|\partial_{\mu\mu}\varphi_2(\lambda_0,\mu_0)-\partial_{\mu\mu}\varphi_2^{(N)}(\lambda_0,\mu_0)\right|\\
			&=\frac{1}{2\pi^2}\left|\sum_{l+m\ge N+1} a_{l,m}\left\{2+(4m-2)(\log\mu_0+b_{l,m})+m(m-1)\left[(\log\mu_0+b_{l,m})^2-c_{l,m}\right]\right\}\lambda_0^l\mu_0^{m-2}\right.\\
			&\hphantom{=\frac{1}{2\pi^2}\Bigg|\sum_{l,m\ge0}}\left. +\frac{1}{2} d_{l+1,m} \left(l+m+1\right) \left(l+m+2\right)\frac{\lambda_0^{l+2m+1}}{\mu_0^{l+m+3}}\right|\\
			&\le\frac{1}{2\pi^2}\left(
			\left|\sum_{l+m\ge N+1}2a_{l,m}\lambda_0^{l} \mu_0^{m-2}\right|
			+\left|\sum_{l+m\ge N+1}ma_{l,m}\left(4\log\mu_0\right)\lambda_0^{l} \mu_0^{m-2}\right|
			+\left|\sum_{l+m\ge N+1}4ma_{l,m}b_{l,m}\lambda_0^{l} \mu_0^{m-2}\right|\right.\\
			&\hphantom{\le\frac{1}{2\pi^2}\Bigg(}\quad+\left|\sum_{l+m\ge N+1}m(m-1)a_{l,m}\left(\log\mu_0\right)^2\lambda_0^{l} \mu_0^{m-2}\right|
			+ \left|\sum_{l+m\ge N+1}m(m-1)a_{l,m}(2\log\mu_0)b_{l,m}\lambda_0^{l} \mu_0^{m-2}\right|\\
			&\hphantom{\le\frac{1}{2\pi^2}\Bigg(}\quad+ \left|\sum_{l+m\ge N+1}m(m-1)a_{l,m}b_{l,m}^2\lambda_0^{l} \mu_0^{m-2}\right|
			+ \left|\sum_{l+m\ge N+1}m(m-1)a_{l,m}c_{l,m}\lambda_0^{l} \mu_0^{m-2}\right|\\
			&\hphantom{\le\frac{1}{2\pi^2}\Bigg(}\left.\quad
			+ \left|\sum_{l+m\ge N+1}\frac12d_{l+1,m}(l+m+1)(l+m+2)\frac{\lambda_0^{l+2m+1}}{\mu_0^{l+m+3}}\right| \right)\\
			&\le\frac1{2\pi^2}\left(\frac2{\mu_0^2}\delta_1(N,\lambda_0,\mu_0)+4\left|\frac{\log\mu_0}{\mu_0}\right|\delta_6(N,\lambda_0,\mu_0)+\frac4{|\mu_0|}\delta_7(N,\lambda_0,\mu_0)+\left(\log\mu_0\right)^2\delta_{12}(N,\lambda_0,\mu_0)\right.\\
			&\hphantom{\le \frac1{2\pi^2}\Bigg(}\left.+2\left|\log\mu_0\right|\delta_{13}(N,\lambda_0,\mu_0)+\delta_{14}(N,\lambda_0,\mu_0)+\delta_{15}(N,\lambda_0,\mu_0)+\frac12\delta_{18}^1(N,\lambda_0,\mu_0)\right)=\varepsilon^2_{\mu\mu}.\qedhere
		\end{align}
	\end{proof}

	\subsubsection{Error bounds for \boldmath$\varphi_3$\boldmath}
	The truncation errors $\varepsilon^3$, $\varepsilon^3_\lambda$, $\varepsilon^3_\mu$, $\varepsilon^3_{\lambda\lambda}$, $\varepsilon^3_{\mu\lambda}$, and $\varepsilon^3_{\mu\mu}$ are explicitly given from Lemmas \ref{lem:TruncationError1}, \ref{lem:TruncationError2}, and \ref{lem:TruncationError3}, detailed in Section~\ref{sec:fundamental_lemmas}. These errors quantify the bounds for the truncation of the fundamental solution $\varphi_3$ and its derivatives.
	
	\begin{thm}\label{thm:TruncationError_phi3}
		Let $N \in \mathbb{N}$ denote the truncation number. Let $\varphi_3$ be the fundamental solution of \eqref{eq:PicardFuchs} as defined in \eqref{eq:phi3} and its truncated approximation at $(\lambda_0,\mu_0)$ denotes $\varphi_3^{(N)}(\lambda_0,\mu_0)$, which is given in \eqref{eq:phi3_N}.
		Let $\eta_{\revise{n}}^{1/2}$, $\theta_{\revise{n}}^{1/2}$, $\nu_{\revise{n}}^{1/2}$, $\xi_{\revise{n}}^{1/2}$, and $\sigma_{\revise{n}}^{1/2}$ be as defined in \eqref{eq:gam_eta_theta}, \eqref{eq:nu_N}, and \eqref{eq:xi_N_sigma_N}.
		Suppose $(\lambda_0,\mu_0)$ satisfy
		\begin{align}
			\frac{\lvert\lambda_0\rvert+{\lvert\lambda_0\rvert}^2}{\lvert\mu_0\rvert}<\min\left\{
			\frac1{\frac{25}{64}\sqrt{1+\frac{1}{{\revise{N+1}}}}},~\frac1{\eta_{\revise{N+1}}^{1/2}},~\frac1{\theta_{\revise{N+1}}^{1/2}},~\frac1{\nu_{\revise{N+1}}^{1/2}},~\frac1{\xi_{\revise{N+1}}^{1/2}},~\frac1{\sigma_{\revise{N+1}}^{1/2}}
			\right\},
		\end{align}	
		then the truncation error bounds $\varepsilon^3$, $\varepsilon^3_\lambda$, $\varepsilon^3_\mu$, $\varepsilon^3_{\lambda\lambda}$, $\varepsilon^3_{\lambda\mu}$, and $\varepsilon^3_{\mu\mu}$ are explicitly given by
		\begin{align}
			\varepsilon^3 &\bydef \frac1{4\pi^2}\delta_5^{1/2}(N,\lambda_0,\mu_0),
			&\varepsilon^3_\lambda &\bydef \frac1{4\pi^2}\delta_{10}^{1/2}(N,\lambda_0,\mu_0),
			&\varepsilon^3_\mu &\bydef \frac1{4\pi^2}\delta_{11}^{1/2}(N,\lambda_0,\mu_0)\\
			\varepsilon^3_{\lambda\lambda} &\bydef \frac1{4\pi^2}\delta_{16}^{1/2}(N,\lambda_0,\mu_0),
			&\varepsilon^3_{\mu\lambda} &\bydef \frac1{4\pi^2}\delta_{17}^{1/2}(N,\lambda_0,\mu_0),
			&\varepsilon^3_{\mu\mu} &\bydef \frac1{4\pi^2}\delta_{18}^{1/2}(N,\lambda_0,\mu_0).
		\end{align}
	\end{thm}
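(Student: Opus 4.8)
The plan is to read each of the six bounds directly off the series representations of $\varphi_3$ and its partial derivatives, invoking for each quantity a single estimate from Lemmas~\ref{lem:TruncationError1}, \ref{lem:TruncationError2}, or \ref{lem:TruncationError3} with the half-integer parameter $\epsilon=1/2$. What makes this case short, compared with $\varphi_2$, is that $\varphi_3$ is built solely from the coefficients $d_{l+1/2,m}$ and carries no logarithmic factors, so no decomposition of the tail into several pieces is required: each of $\varepsilon^3$, $\varepsilon^3_\lambda$, $\varepsilon^3_\mu$, $\varepsilon^3_{\lambda\lambda}$, $\varepsilon^3_{\mu\lambda}$, $\varepsilon^3_{\mu\mu}$ is, up to the fixed prefactor $\tfrac1{4\pi^2}$, the tail of a single $d$-type series, and the corresponding $\delta^{1/2}$ bound applies verbatim.

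First I would check that the theorem's hypothesis supplies exactly the convergence conditions needed. Since \eqref{eq:phi3}, \eqref{eq:phi3_N}, and the derivative formulas \eqref{eq:phi_3_lam}--\eqref{eq:phi_3_mumu} contain no $a_{l,m}$ terms, only the second (ratio) inequalities in \eqref{eq:convergencecondition2}, \eqref{eq:convergencecondition3}, and \eqref{eq:convergencecondition4} — those governing $\delta_5^{1/2}$, $\delta_{10}^{1/2}$, $\delta_{11}^{1/2}$, $\delta_{16}^{1/2}$, $\delta_{17}^{1/2}$, $\delta_{18}^{1/2}$ — are relevant. Collecting these, they amount precisely to the requirement that $(\lvert\lambda_0\rvert+\lvert\lambda_0\rvert^2)/\lvert\mu_0\rvert$ be less than the minimum of the reciprocals of $\tfrac{25}{64}\sqrt{1+1/(N+1)}$, $\eta_{N+1}^{1/2}$, $\theta_{N+1}^{1/2}$, $\nu_{N+1}^{1/2}$, $\xi_{N+1}^{1/2}$, and $\sigma_{N+1}^{1/2}$, which is the stated hypothesis; hence all six invoked inequalities of Lemmas~\ref{lem:TruncationError1}--\ref{lem:TruncationError3} apply.

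Next, subtracting \eqref{eq:phi3_N} from \eqref{eq:phi3} leaves exactly $\tfrac1{4\pi^2}\sum_{l+m\ge N+1} d_{l+1/2,m}\,\lambda_0^{l+2m+1/2}\mu_0^{-(l+m+1/2)}$, which \eqref{eq:delta5} bounds by $\tfrac1{4\pi^2}\delta_5^{1/2}(N,\lambda_0,\mu_0)=\varepsilon^3$. The same step applied to the tails in \eqref{eq:phi_3_lam} and \eqref{eq:phi_3_mu} together with \eqref{eq:delta10} and \eqref{eq:delta11} yields $\varepsilon^3_\lambda$ and $\varepsilon^3_\mu$, and applied to \eqref{eq:phi_3_lamlam}, \eqref{eq:phi_3_mulam}, \eqref{eq:phi_3_mumu} with \eqref{eq:delta16}, \eqref{eq:delta17}, \eqref{eq:delta18} yields $\varepsilon^3_{\lambda\lambda}$, $\varepsilon^3_{\mu\lambda}$, $\varepsilon^3_{\mu\mu}$. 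I expect no genuine obstacle here; the only care needed is the bookkeeping of matching the index-shifted monomial powers and the polynomial prefactors $(l+2m+\tfrac12)$, $(l+m+\tfrac12)$, $(l+2m+\tfrac12)(l+2m-\tfrac12)$, and the like in \eqref{eq:phi_3_lam}--\eqref{eq:phi_3_mumu} to those appearing inside the corresponding $\delta^{1/2}$ bounds, and carrying the constant $\tfrac1{4\pi^2}$ through unchanged.
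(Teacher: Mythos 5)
Your proposal is correct and follows essentially the same route as the paper: each of the six bounds is obtained by applying the corresponding $\delta^{1/2}$ estimate (\eqref{eq:delta5}, \eqref{eq:delta10}, \eqref{eq:delta11}, \eqref{eq:delta16}, \eqref{eq:delta17}, \eqref{eq:delta18}) with $\epsilon=1/2$ directly to the single $d$-type tail of $\varphi_3$ and its derivatives, carrying the prefactor $\tfrac{1}{4\pi^2}$ through, with only the ratio-type convergence conditions being relevant. Your observation that no decomposition is needed because $\varphi_3$ has no logarithmic or $a_{l,m}$ terms is exactly the point the paper's proof exploits.
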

	\begin{proof}
		From \eqref{eq:phi3}, \eqref{eq:phi3_N}, and \eqref{eq:delta5} in Lemma~\ref{lem:TruncationError1}, the truncated error bound $\varepsilon^3$ is obtained by
		\begin{align}
			\left|\varphi^3-\varphi_3^{(N)}(\lambda_0,\mu_0)\right|=\frac1{4\pi^2}\left|\sum_{l+m\ge N+1}
			d_{l+\frac{1}{2},m}\frac{\lambda^{l+2m+\frac{1}{2}}}{\mu^{l+m+\frac{1}{2}}}
			\right|\le\frac1{4\pi^2}\delta_5^{1/2}(N,\lambda_0,\mu_0)=\varepsilon^3.
		\end{align}

		From \eqref{eq:phi_3_lam}, we derive the error bound at $(\lambda_0,\mu_0)$ using \eqref{eq:delta10} in Lemma~\ref{lem:TruncationError2}
		\begin{align}
			\left|\partial_\lambda\varphi_3(\lambda_0,\mu_0)-\partial_\lambda\varphi_3^{(N)}(\lambda_0,\mu_0)\right|=\frac1{4\pi^2}\left|\sum_{l+m\ge N+1}ld_{l+\frac{1}{2},m}\left(l+2m+\frac{1}{2}\right)\frac{\lambda^{l+2m-\frac{1}{2}}}{\mu^{l+m+\frac{1}{2}}}\right|\le \frac1{4\pi^2}\delta_{10}^{1/2}(N,\lambda_0,\mu_0)=\varepsilon^3_\lambda.
		\end{align}
		Similarly, the $\varepsilon^3_\mu$ bound is obtained from \eqref{eq:phi_3_mu} and \eqref{eq:delta11}
		\begin{align}
			\left|\partial_\mu\varphi_3(\lambda_0,\mu_0)-\partial_\mu\varphi_3^{(N)}(\lambda_0,\mu_0)\right|=\frac1{4\pi^2}\left|\sum_{l+m\ge N+1}d_{l+\frac{1}{2},m}\left(l+m+\frac{1}{2}\right)\frac{\lambda^{l+2m+\frac{1}{2}}}{\mu^{l+m+\frac{3}{2}}}\right| \le \frac1{4\pi^2}\delta_{11}^{1/2}(N,\lambda_0,\mu_0)=\varepsilon^3_\mu.
		\end{align}
		
		Furthermore, the bounds $\varepsilon^3_{\lambda\lambda}$, $\varepsilon^3_{\mu\lambda}$, and $\varepsilon^3_{\mu\mu}$ are derived from \eqref{eq:phi_3_lamlam}, \eqref{eq:phi_3_mulam}, and \eqref{eq:phi_3_mumu}, respectively, using \eqref{eq:delta16}, \eqref{eq:delta17}, and \eqref{eq:delta17} in Lemma~\ref{lem:TruncationError3}.
		\begin{align}
			\left|\partial_{\lambda\lambda}\varphi_3(\lambda_0,\mu_0)-\partial_{\lambda\lambda}\varphi_3^{(N)}(\lambda_0,\mu_0)\right|&=	\frac1{4\pi^2}\left|\sum_{l+m\ge N+1}d_{l+\frac{1}{2},m}\left(l+2m+\frac{1}{2}\right)\left(l+2m-\frac{1}{2}\right)\frac{\lambda^{l+2m-\frac{3}{2}}}{\mu^{l+m+\frac{1}{2}}}\right|\\
			&\le\frac1{4\pi^2}\delta_{16}^{1/2}(N,\lambda_0,\mu_0)=\varepsilon^3_{\lambda\lambda}\\
			\left|\partial_{\mu\lambda}\varphi_3(\lambda_0,\mu_0)-\partial_{\mu\lambda}\varphi_3^{(N)}(\lambda_0,\mu_0)\right|&=\frac1{4\pi^2}\left|\sum_{l+m\ge N+1}d_{l+\frac{1}{2},m}\left(l+2m+\frac{1}{2}\right)\left(l+m+\frac{1}{2}\right)\frac{\lambda^{l+2m-\frac{1}{2}}}{\mu^{l+m+\frac{3}{2}}}\right|\\
			&\le\frac1{4\pi^2}\delta_{17}^{1/2}(N,\lambda_0,\mu_0)=\varepsilon^3_{\mu\lambda}\\
			\left|\partial_{\mu\mu}\varphi_3(\lambda_0,\mu_0)-\partial_{\mu\mu}\varphi_3^{(N)}(\lambda_0,\mu_0)\right|&=\frac1{4\pi^2}\left|\sum_{l+m\ge N+1}d_{l+\frac{1}{2},m}\left(l+m+\frac{1}{2}\right)\left(l+m+\frac{3}{2}\right)\frac{\lambda^{l+2m+\frac{1}{2}}}{\mu^{l+m+\frac{5}{2}}}\right|\\
			&\le\frac1{4\pi^2}\delta_{12}(N,\lambda_0,\mu_0)=\varepsilon^3_{\mu\mu}.\qedhere
		\end{align}
	\end{proof}

	\subsubsection{Error bounds for \boldmath$\varphi_4$\boldmath}
	Finally, the truncation errors for  the fundamental solution $\varphi_4$, namely $\varepsilon^4$, $\varepsilon^4_\lambda$, $\varepsilon^4_\mu$, $\varepsilon^4_{\lambda\lambda}$, $\varepsilon^4_{\mu\lambda}$, and $\varepsilon^4_{\mu\mu}$, are derived from Lemmas \ref{lem:TruncationError1}, \ref{lem:TruncationError2}, and \ref{lem:TruncationError3} in Section~\ref{sec:fundamental_lemmas}. These bounds explicitly provide precise estimates for the truncation of $\varphi_4$ and its derivatives.
	
	\begin{thm}\label{thm:TruncationError_phi4}
		Let $N \in \mathbb{N}$ be the truncation number. The fundamental solution of \eqref{eq:PicardFuchs}, denoted by $\varphi_4$, is defined in \eqref{eq:phi4}, and its truncated approximation at $(\lambda_0,\mu_0)$ is represented by $\varphi_4^{(N)}(\lambda_0,\mu_0)$ given in \eqref{eq:phi4_N}.
		If $(\lambda_0,\mu_0)$ satisfy the conditions \eqref{eq:convergencecondition2}, \eqref{eq:convergencecondition3}, and \eqref{eq:convergencecondition4}, then the truncation error bounds $\varepsilon^4$, $\varepsilon^4_\lambda$, $\varepsilon^4_\mu$, $\varepsilon^4_{\lambda\lambda}$, $\varepsilon^4_{\lambda\mu}$, and $\varepsilon^4_{\mu\mu}$ are explicitly given as follows:
		\begin{align}
			\varepsilon^4 &\bydef \frac1{2\pi}\left(|\log\mu_0|\delta_1(N,\lambda_0,\mu_0) + \delta_2(N,\lambda_0,\mu_0)\right)\\
			\varepsilon^4_\lambda &\bydef \frac1{2\pi}\left(|\log\mu_0|\delta_6(N,\lambda_0,\mu_0) + \delta_7(N,\lambda_0,\mu_0)\right)\\
			\varepsilon^4_\mu &\bydef \frac1{2\pi}\left(\frac1{|\mu_0|}\delta_1(N,\lambda_0,\mu_0) + |\log\mu_0|\delta_6(N,\lambda_0,\mu_0) + \delta_7(N,\lambda_0,\mu_0)\right)\\
			\varepsilon^4_{\lambda\lambda} &\bydef \frac1{2\pi}\left(|\log\mu_0|\delta_{12}(N,\lambda_0,\mu_0) + \delta_{13}(N,\lambda_0,\mu_0)\right)\\
			\varepsilon^4_{\mu\lambda} &\bydef \frac1{2\pi}\left(\frac1{|\mu_0|}\delta_6(N,\lambda_0,\mu_0) + |\log\mu_0|\delta_{12}(N,\lambda_0,\mu_0) + \delta_{13}(N,\lambda_0,\mu_0)\right)\\
			\varepsilon^4_{\mu\mu} &\bydef \frac1{2\pi}\left(\frac2{|\mu_0|}\delta_6(N,\lambda_0,\mu_0) + |\log\mu_0|\delta_{12}(N,\lambda_0,\mu_0) + \delta_{13}(N,\lambda_0,\mu_0)\right).
		\end{align}
	\end{thm}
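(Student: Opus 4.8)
The plan is to follow, in an abbreviated form, the proof of Theorem~\ref{thm:TruncationError_phi2}. Since the series \eqref{eq:phi4} defining $\varphi_4$ contains only the \emph{first} power of the logarithmic factor $(\log\mu+b_{l,m})$ and neither the $c_{l,m}$- nor the $d$-coefficient parts, each of the six estimates reduces to a short sequence of triangle inequalities. For $\varphi_4$ and for each of its partial derivatives recorded in \eqref{eq:phi_4_lam}--\eqref{eq:phi_4_mumu}, I would write the corresponding error $\varphi_4-\varphi_4^{(N)}$ (respectively $\partial_{\bullet}\varphi_4-\partial_{\bullet}\varphi_4^{(N)}$) as the tail sum over $l+m\ge N+1$ of that series, factor out the constant $|2\pi\im|^{-1}=(2\pi)^{-1}$, expand the bracketed $(l,m)$-prefactor linearly in $\log\mu_0$ and $b_{l,m}$, and then bound each of the resulting sums by the appropriate $\delta_i(N,\lambda_0,\mu_0)$ from Lemmas~\ref{lem:TruncationError1}--\ref{lem:TruncationError3}.

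Concretely, for $\varphi_4$ itself the decomposition $(\log\mu_0+b_{l,m})=\log\mu_0+b_{l,m}$ yields the two sums bounded by $|\log\mu_0|\,\delta_1$ and $\delta_2$; for $\partial_\lambda\varphi_4$ the prefactor is $l$ with powers $\lambda_0^{l-1}$, giving $|\log\mu_0|\,\delta_6+\delta_7$; and for $\partial_{\lambda\lambda}\varphi_4$ the prefactor is $l(l-1)$, giving $|\log\mu_0|\,\delta_{12}+\delta_{13}$. For the $\mu$-derivatives the only additional manipulation is to rewrite the shifted powers $\mu_0^{m-1}$ and $\mu_0^{m-2}$ in the normalized forms used by the lemmas, via $\mu_0^{m-1}=\mu_0^{-1}\mu_0^{m}$ and $\mu_0^{m-2}=\mu_0^{-1}\mu_0^{m-1}$ (or $\mu_0^{-2}\mu_0^m$); this is exactly the source of the $|\mu_0|^{-1}$ factors appearing in the statement. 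The collection of bounds invoked here --- $\delta_1,\delta_2,\delta_6,\delta_7,\delta_{12},\delta_{13}$ --- is a sub-collection of those used in Theorem~\ref{thm:TruncationError_phi2}, and each is valid precisely under the hypotheses \eqref{eq:convergencecondition2}, \eqref{eq:convergencecondition3}, \eqref{eq:convergencecondition4}, so no further restriction on $(\lambda_0,\mu_0)$ is required.

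The one place that calls for genuine care is $\partial_{\mu\mu}\varphi_4$. Its prefactor $(m-1)+m\bigl[1+(m-1)(\log\mu_0+b_{l,m})\bigr]$ equals $(2m-1)+m(m-1)(\log\mu_0+b_{l,m})$, so the $m(m-1)$-weighted pieces are absorbed directly into $|\log\mu_0|\,\delta_{12}$ and $\delta_{13}$, while the remaining sum $\sum_{l+m\ge N+1}(2m-1)a_{l,m}\lambda_0^l\mu_0^{m-2}$ must be controlled by $\tfrac{2}{|\mu_0|}\,\delta_6$. For the terms with $m\ge1$ this is immediate from $|2m-1|\le 2m$ together with $\mu_0^{m-2}=\mu_0^{-1}\mu_0^{m-1}$; the residual $m=0$ part is $\sum_{l\ge N+1}a_{l,0}|\lambda_0|^{l}|\mu_0|^{-2}$, which is dwarfed by $\delta_6$ because $\delta_6$ carries the factor $(4(N+1))!$ whereas $a_{l,0}=\binom{2l}{l}$ grows only exponentially. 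I expect this bookkeeping step --- rather than any analytic difficulty --- to be the main obstacle; everything else is a direct transcription of the lemmas, so once the decomposition of each prefactor is laid out, the six bounds follow essentially verbatim.
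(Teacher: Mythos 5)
Your plan reproduces the paper's proof almost verbatim for five of the six bounds: the same triangle-inequality splits of the prefactors $(\log\mu_0+b_{l,m})$, $1+m(\log\mu_0+b_{l,m})$ and $l\left[1+m(\log\mu_0+b_{l,m})\right]$, the same rewriting $\mu_0^{m-1}=\mu_0^{-1}\mu_0^{m}$ that produces the $|\mu_0|^{-1}$ factors, and the same sub-collection $\delta_1,\delta_2,\delta_6,\delta_7,\delta_{12},\delta_{13}$ of Lemmas~\ref{lem:TruncationError1}--\ref{lem:TruncationError3} under the identical hypotheses, so for $\varepsilon^4,\varepsilon^4_\lambda,\varepsilon^4_\mu,\varepsilon^4_{\lambda\lambda},\varepsilon^4_{\mu\lambda}$ there is nothing to add. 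The one divergence is $\varepsilon^4_{\mu\mu}$. The paper's proof splits the prefactor $(2m-1)+m(m-1)(\log\mu_0+b_{l,m})$ and bounds the first piece by $\left|\sum 2m\,a_{l,m}\lambda_0^l\mu_0^{m-2}\right|$, i.e.\ it implicitly replaces $|2m-1|$ by $2m$, which is valid for $m\ge1$ but drops the $m=0$ tail terms (whose coefficient is $-1$); you correctly notice these terms, and your idea of absorbing $\sum_{l\ge N+1}a_{l,0}|\lambda_0|^{l}|\mu_0|^{-2}$ into the slack of $\tfrac{2}{|\mu_0|}\delta_6$ is a genuine refinement over what the paper writes. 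However, as stated it is only heuristic: what must be verified is that this $m=0$ tail fits into the gap between $\tfrac{2}{|\mu_0|}\delta_6$ and the actual $m\ge1$ sum, not merely that it is small compared with $\delta_6$ itself, and comparing the factorial factor $(4(N+1))!$ in $\delta_6$ with the exponential growth of $a_{l,0}=\binom{2l}{l}$ does not by itself give that; one also needs the hypotheses controlling $|\lambda_0|/|\mu_0|$ and $|\lambda_0|+|\mu_0|$, e.g.\ by comparing, level by level in $n=l+m$, the discarded term $\binom{2n}{n}|\lambda_0|^{n}$ with the unused pure-$|\lambda_0|^{n-1}$ portion of the binomial expansion of $(|\lambda_0|+|\mu_0|)^{n-1}$ that enters the derivation of $\delta_6$. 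So: either carry out that bookkeeping explicitly, or follow the paper and state the termwise majorization $|2m-1|\le 2m$ for $m\ge 1$ while handling the $m=0$ terms separately — but be aware that the latter is exactly the point your absorption argument is meant to repair.
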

	\begin{proof}
		Let us begin by considering the $\varepsilon^4$ bound. From \eqref{eq:phi4}, \eqref{eq:phi4_N}, the truncated error bound $\varepsilon^4$ is obtained, using \eqref{eq:delta1} and \eqref{eq:delta2} in Lemma~\ref{lem:TruncationError1}, by
		\begin{align}
			\left|\varphi^4-\varphi_4^{(N)}(\lambda_0,\mu_0)\right|&=\frac1{2\pi}\left|\sum_{l+m\ge N+1}a_{l,m}(\log\mu_0+b_{l,m})\lambda_0^l\mu_0^m\right|\\
			&\le\frac1{2\pi}\left(\left|\sum_{l+m\ge N+1}a_{l,m}\left(\log\mu_0\right)\lambda_0^l \mu_0^m\right|
			+ \left|\sum_{l+m\ge N+1}a_{l,m}b_{l,m}\lambda_0^l \mu_0^m\right|\right)\\
			&\le \frac1{2\pi}\left(|\log\mu_0|\delta_1(N,\lambda_0,\mu_0) + \delta_2(N,\lambda_0,\mu_0)\right)=\varepsilon^4.
		\end{align}
		
		Next, $\varepsilon^4_\lambda$ bound is obtained by the form \eqref{eq:phi_4_lam}, \eqref{eq:delta6} and \eqref{eq:delta7} in Lemma~\ref{lem:TruncationError2}
		\begin{align}
			\left|\partial_\lambda\varphi_4(\lambda_0,\mu_0)-\partial_\lambda\varphi_4^{(N)}(\lambda_0,\mu_0)\right|
			&=
			\frac1{2\pi}\left|\sum_{l+m\ge N+1}la_{l,m}(\log\mu_0+b_{l,m})\lambda_0^{l-1}\mu_0^m\right|\\
			&\le\frac1{2\pi}\left(\left|\sum_{l+m\ge N+1}la_{l,m}\left(\log\mu_0\right)\lambda_0^{l-1} \mu_0^m\right|
			+ \left|\sum_{l+m\ge N+1}la_{l,m}b_{l,m}\lambda_0^{l-1} \mu_0^m\right|
			\right)\\
			&\le\frac1{2\pi}\left(|\log\mu_0|\delta_6(N,\lambda_0,\mu_0) + \delta_7(N,\lambda_0,\mu_0)\right)=\varepsilon^4_\lambda.
		\end{align}
		Similarly, using Lemmas~\ref{lem:TruncationError1} and \ref{lem:TruncationError2}, we have the $\varepsilon^4_\mu$ bound from \eqref{eq:phi_4_mu}, \eqref{eq:delta1}, \eqref{eq:delta6} and \eqref{eq:delta7}
		\begin{align}
			&\left|\partial_\mu\varphi_4(\lambda_0,\mu_0)-\partial_\mu\varphi_4^{(N)}(\lambda_0,\mu_0)\right|\\
			&= \frac1{2\pi}\left|\sum_{l+m\ge N+1}a_{l,m} \left[1+m(\log\mu_0+b_{l,m})\right]\lambda_0^l\mu_0^{m-1}\right|\\
			&\le\frac1{2\pi}\left(
			\left|\sum_{l+m\ge N+1}a_{l,m}\lambda_0^l \mu_0^{m-1}\right|
			+\left|\sum_{l+m\ge N+1}ma_{l,m}(\log\mu_0)\lambda_0^l \mu_0^{m-1}\right|
			+\left|\sum_{l+m\ge N+1}ma_{l,m}b_{l,m}\lambda_0^l \mu_0^{m-1}\right|
			\right)\\
			&\le \frac1{2\pi}\left(\frac1{|\mu_0|}\delta_1(N,\lambda_0,\mu_0) + |\log\mu_0|\delta_6(N,\lambda_0,\mu_0) + \delta_7(N,\lambda_0,\mu_0)\right)=\varepsilon^4_\mu.
		\end{align}

		Furthermore, the bounds $\varepsilon^4_{\lambda\lambda}$, $\varepsilon^4_{\mu\lambda}$, and $\varepsilon^4_{\mu\mu}$ are derived from \eqref{eq:phi_4_lamlam}, \eqref{eq:phi_4_mulam}, and \eqref{eq:phi_4_mumu}, respectively, using \eqref{eq:delta6} in Lemma~\ref{lem:TruncationError2}, \eqref{eq:delta12} and \eqref{eq:delta13} in Lemma~\ref{lem:TruncationError3}. It follows that
		\begin{align}
			&\left|\partial_{\lambda\lambda}\varphi_4(\lambda_0,\mu_0)-\partial_{\lambda\lambda}\varphi_4^{(N)}(\lambda_0,\mu_0)\right|\\
			&=\frac1{2\pi}\left|\sum_{l+m\ge N+1}l(l-1)a_{l,m}(\log\mu_0+b_{l,m})\lambda_0^{l-2}\mu_0^m\right|\\
			&\le\frac1{2\pi}\left(\left|\sum_{l+m\ge N+1}l(l-1)a_{l,m}\left(\log\mu_0\right)\lambda_0^{l-2} \mu_0^m\right|
			+ \left|\sum_{l+m\ge N+1}l(l-1)a_{l,m}b_{l,m}\lambda_0^{l-2} \mu_0^m\right| \right)\\
			&\le \frac1{2\pi^2}\left(|\log\mu_0|\delta_{12}(N,\lambda_0,\mu_0) + \delta_{13}(N,\lambda_0,\mu_0)\right)=\varepsilon^4_{\lambda\lambda},
		\end{align}
		\begin{align}
			&\left|\partial_{\mu\lambda}\varphi_4(\lambda_0,\mu_0)-\partial_{\mu\lambda}\varphi_4^{(N)}(\lambda_0,\mu_0)\right|\\
			&=\frac1{2\pi}\left|\sum_{l+m\ge N+1}la_{l,m} \left[1+m(\log\mu_0+b_{l,m})\right]\lambda_0^{l-1}\mu_0^{m-1}\right|\\
			&\le\frac1{2\pi}\left(
			\left|\sum_{l+m\ge N+1}la_{l,m}\lambda_0^{l-1} \mu_0^{m-1}\right|
			+ \left|\sum_{l+m\ge N+1}lma_{l,m}(\log\mu_0)b_{l,m}\lambda_0^{l-1} \mu_0^{m-1}\right|
			+ \left|\sum_{l+m\ge N+1}lma_{l,m}b_{l,m}\lambda_0^{l-1} \mu_0^{m-1}\right|
			\right)\\
			&\le \frac1{2\pi}\left(\frac1{|\mu_0|}\delta_6(N,\lambda_0,\mu_0) + |\log\mu_0|\delta_{12}(N,\lambda_0,\mu_0) + \delta_{13}(N,\lambda_0,\mu_0)\right)=\varepsilon^4_{\mu\lambda},
		\end{align}
		and
		\begin{align}
			&\left|\partial_{\mu\mu}\varphi_4(\lambda_0,\mu_0)-\partial_{\mu\mu}\varphi_4^{(N)}(\lambda_0,\mu_0)\right|\\
			&=\frac1{2\pi}\left|\sum_{l+m\ge N+1}a_{l,m}\left\{(m-1)+m\left[1+(m-1)(\log\mu_0+b_{l,m})\right]\right\}\lambda_0^l\mu_0^{m-2}\right|\\
			&\le\frac1{2\pi}\left(
			\left|\sum_{l+m\ge N+1}2ma_{l,m}\lambda_0^{l} \mu_0^{m-2}\right|
			+\left|\sum_{l+m\ge N+1}m(m-1)a_{l,m}\left(\log\mu_0\right)\lambda_0^{l} \mu_0^{m-2}\right|\right.\\
			&\hphantom{\le\frac1{2\pi}\Bigg(}\quad\left.+ \left|\sum_{l+m\ge N+1}m(m-1)a_{l,m}b_{l,m}\lambda_0^{l} \mu_0^{m-2}\right|\right)\\
			&\le\frac1{2\pi}\left(\frac2{|\mu_0|}\delta_6(N,\lambda_0,\mu_0) + |\log\mu_0|\delta_{12}(N,\lambda_0,\mu_0) + \delta_{13}(N,\lambda_0,\mu_0)\right)=\varepsilon^4_{\mu\mu}.\qedhere
		\end{align}
	\end{proof}
	
	\subsection{Rigorously finding monodromy using analytic continuation}\label{sec:CAPmethod}
	%

	To sum up the above, this section outlines our method of computer-assisted proofs for computing the monodromy matrices of the Picard--Fuchs differential equation \eqref{eq:PicardFuchs} by rigorously integrating the Pfaffian equation \eqref{eq:diff_eq}, corresponding to rigorous analytic continuation along the prescribed contour. 
	The first step in this procedure begins with computing the fundamental system of solutions \( \Phi(p_0) \) at the base point \( p_0 \), as defined in \eqref{eq:base_pt}. 
	This computation is performed rigorously using interval arithmetic to account for rounding errors when evaluating finite series of the truncated functions $\varphi_k^{(N)}$ ($k=1,2,3,4$). The truncation error is then estimated using Theorems~\ref{thm:TruncationError_phi1}, \ref{thm:TruncationError_phi2}, \ref{thm:TruncationError_phi3}, and \ref{thm:TruncationError_phi4}. Combining these ensures that the value of the fundamental system of solutions is both rigorous and reliable.
	Next, we choose a closed loop \( \Sigma_i \) as presented in Section~\ref{sec:path_contour}, which encircles a singular point \( p_i \) ($i=1,\dots,6$).  As mentioned in Section~\ref{sec:path_contour}, \( \Sigma_i \) is typically represented parametrically, such as a circular arc or another simple path in the complex plane.
	In the third step, the fundamental system of solutions is analytically continued along the loop \( \Sigma_i \) using a rigorous numerical integrator. The initial condition is set as the identity matrix, say \( \mathrm{Id} \), indicating that no transformation has occurred at the start of the loop.
	The result of this step is the analytically continued fundamental system of solutions at the end of the loop, denoted by \( (\Sigma_i)_\ast \mathrm{Id} \).
	
	Once the analytic continuation along the loop \( \Sigma_i \) is complete, the monodromy matrix \( M_{\Sigma_i} \) is computed using the conjugacy formula:
	\[
	M_{\Sigma_i} = \Phi(p_0)^{-1} \left( (\Sigma_i)_\ast \mathrm{Id} \right) \Phi(p_0).
	\]
	This formula, implemented with interval arithmetic, rigorously enclose the transformation of the fundamental system of solutions resulting from the analytic continuation around the singular point.
	For each singular point $p_i$,  the loop \( \Sigma_i \) generates a corresponding monodromy matrix \( M_{\Sigma_i} \).
	Together, these matrices constitute the monodromy group, capturing the behavior of solutions under analytic continuation.

	This procedure enables the rigorous computation of monodromy matrices, which is a generator of Monodromy group for differential equations.
	The most important thing to emphasize here is that, by combining analytical techniques with validated numerics, our method of computer-assisted proofs is possible to systematically analyze the behavior of solutions and their transformations, providing a robust framework for studying the monodromy.

\section{Computational results}\label{sec:results}

In this section, we present the proof of our main result, namely Theorem~\ref{thm-main-result}. Detailed results are provided for the path $\Sigma_1$, as the results for the other paths are nearly identical to those presented here. For the remaining results on each path, we refer to the publicly available implementation code \cite{code}. The implementation is based on the kv library \cite{kv}, written in the C++ language.

First of all, the values of the fundamental system of solutions defined in Section~\ref{sec:fundamental_sol} at the base point $p_0$ are rigorously computed by the two-step procedure presented in Section~\ref{sec:Rig_inclution_fund_sol}. The truncation number is set to $N=41$. Using interval arithmetic and the truncation error bounds provided in Theorems~\ref{thm:TruncationError_phi1}, \ref{thm:TruncationError_phi2}, \ref{thm:TruncationError_phi3}, and \ref{thm:TruncationError_phi4}, the rigorous inclusion of each fundamental solution value is obtained as follows:
\begin{align}
	&\Phi(p_0)=\begin{bmatrix}
		\varphi^1 & \varphi^2 & \varphi^3 &\varphi^4\\[2pt]
		\varphi_{x}^1 & \varphi_{x}^2 & \varphi_{x}^3 &\varphi_{x}^4\\[2pt]
		\varphi_{y}^1 & \varphi_{y}^2 & \varphi_{y}^3 &\varphi_{y}^4\\[2pt]
		\varphi_{xy}^1 & \varphi_{xy}^2 & \varphi_{xy}^3 &\varphi_{xy}^4
	\end{bmatrix}\\
	&\in\begin{bmatrix}
		1.02865241561_{7963}^{8159} & 2.39217617242_{4841}^{5186} & [-4.3,4.4]\cdot 10^{-17} + 0.153171225738_{1849}^{2252}\,\im & 1.114490740819_{042}^{171}\,\im\\
		-33.248652909_{20167}^{04933} & 249.082552_{2945907}^{3001465} & [-5.3,5.1]\cdot 10^{-13}  -1165.9899089_{20037}^{19687}\,\im & -40.49146992226_{83}^{318}\,\im\\
		-8.7561501435_{48835}^{11071} & 73.21158171_{465447}^{603159} & [-1.3,1.3]\cdot 10^{-13} -289.2333425644_{927}^{053}\,\im& -7.68095985658_{7176}^{6149}\,\im\\
		288.265_{6899693559}^{7024336106} & 175537.369_{3964952}^{8700323} & [-1.5,1.4]\cdot 10^{-9} -2770453.21111_{7561}^{6786}\,\im& 263.835019245_{0897}^{3826}\,\im
	\end{bmatrix}.
\end{align}
\begin{rem}
	We can directly verify that the values of $\varphi_1$, $\varphi_2$, and $\varphi_4$ are either real or purely imaginary,as determined from their series representations in \eqref{eq:phi1}, \eqref{eq:phi2}, and \eqref{eq:phi4}, respectively. Similarly, $\varphi_3$  can be identified as purely imaginary; however, due to rounding errors in evaluating finite series, a tiny error appears in its real part.
\end{rem}

Next, as described in Section~\ref{sec:singular_points}, we identify six singular points $p_i$ ($i=1,\dots,6$) via validated numerics. In our implementation code \cite{code}, the file \texttt{verify\_singular\_points.cc} \revise{provides the rigorous enclosure of these singular points}, guaranteed by the Krawczyk method \cite{MR2652784,MR0255046}. For the first singular point $p_1$, we select the closed loop as $\Sigma_1$, as introduced in Section~\ref{sec:path_contour}.

The fundamental system of solutions is analytically continued along the loop $\Sigma_1$. This procedure can be executed using the file \texttt{find\_monodromy\_path1.cc} in our implementation code \cite{code}. The resulting matrix \( (\Sigma_i)_\ast \mathrm{Id} \) is rigorously enclosed within the following interval matrix:
\begin{align}
	&(\Sigma_i)_\ast \mathrm{Id}\in\\
	&{\footnotesize\begin{bmatrix}
			-0.5928_{4103}^{1368}  -0.8206_{3778}^{1043}\,\im & 0.177381_{11}^{67}  -0.106186_{93}^{37}\,\im & -0.62597_{548}^{386} + 0.30719_{318}^{48}\,\im & -7.89_{22669}^{18086}\cdot10^{-6} + 1.1831_{343}^{801}\cdot10^{-5}\,\im \\
			778.3_{3316}^{9694} -537._{32321}^{25944}\,\im & 212.32_{17}^{3} -7.12_{29292}^{16315}\,\im & -719.0_{1365}^{0986} -35.85_{5359}^{1577}\,\im & -0.01304_{8333}^{7265} + 0.00673_{43657}^{54342}\,\im \\ 
			194.8_{1319}^{2909} -134.2_{6552}^{4962}\,\im & 53.366_{122}^{445} -1.7_{403084}^{399848}\,\im & -180.86_{206}^{112} -9.10_{97147}^{87717}\,\im & -0.003265_{4937}^{2274} + 0.001682_{1324}^{3988}\,\im \\
			1896_{051.7}^{216.3} -1350_{876.1}^{711.5}\,\im & 52477_{3.86}^{7.21} -2531_{3.671}^{0.318}\,\im & -17709_{66.2}^{56.4} -620_{17.539}^{07.764}\,\im & -32.86_{931}^{6553} + 17.05_{0769}^{3526}\,\im 
		\end{bmatrix}.}
\end{align}
Although the maximum relative error of the resulting matrix is as small as  $7.5\cdot 10^{-5}$, the maximum radius of interval reaches approximately $82$. As a result, the monodromy matrix is enclosed within a significantly large interval.
Consequently, the resulting interval inclusion of the monodromy matrix ${M}_{\Sigma_1}=\Phi\left(p_0\right)^{-1} \left((\Sigma_1)_{\ast}\mathrm{Id}\right)\Phi\left(p_0\right)$ is given by

\begin{align*}
	&{M}_{\Sigma_1}\in\\
	&\left\langle{\small\begin{bmatrix}
			-1.00 + 9.52\cdot 10^{-12}\,\im & -2.00 + 1.11\cdot 10^{-9}\,\im & -2.00 -1.17\cdot 10^{-9}\,\im & -1.00-1.15\cdot 10^{-10}\,\im \\
			-1.67\cdot 10^{-11} -2.29\cdot 10^{-11}\,\im & -1.00  -1.38\cdot 10^{-11}\,\im & 7.02\cdot 10^{-10} -1.01\cdot 10^{-9}\,\im & 2.44\cdot 10^{-11} -1.01\cdot 10^{-10}\,\im \\
			-5.68\cdot 10^{-11} -2.49\cdot 10^{-11}\,\im & 4.00  + -3.01\cdot 10^{-9}\,\im & 3.00  -4.06\cdot 10^{-10}\,\im & 2.00  -1.81\cdot 10^{-10}\,\im \\
			1.13\cdot 10^{-10} + 1.18\cdot 10^{-11}\,\im & -4.00  + 4.73\cdot 10^{-9}\,\im & -4.00  + 2.11\cdot 10^{-9}\,\im & -3.00  + 2.42\cdot 10^{-10}\,\im 
	\end{bmatrix}}\right.,\\
	&\left.{\small\begin{bmatrix}
			281.2 & 1860.4 & 1.1037.5 & 302.5 \\
			281.2 & 1860.4 & 1.1037.5 & 302.5 \\
			281.2 & 1860.4 & 1.1037.5 & 302.5 \\
			281.2 & 1860.4 & 1.1037.5 & 302.5
	\end{bmatrix}}\right\rangle,
\end{align*}
where $\langle\cdot,\cdot\rangle$ denote the middle-radius form of the interval matrix.
%
Our target monodromy matrix \eqref{eq:theMonodromyMatrix} is unimodular, with all entries being integers. However, the above interval inclusion fails to confirm the uniqueness of an integer within each interval entry. This difficulty arises from the \emph{wrapping effect} in the rigorous integration for ODEs. In particular, the scales of each column in \( (\Sigma_i)_\ast \mathrm{Id} \) vary significantly, which can cause the absolute values of errors to grow disproportionately. While the kv library’s rigorous integrator is generally effective for obtaining rigorous inclusions of the solution to ODEs, achieving the desired precision in this case is challenging.

\begin{table}[htbp]
	\caption{Maximum radius of rigorous inclusion of analytic continuation using \texttt{double} precision.}
	\label{table1}
	\centering
	\begin{tabular}{c|ccc}
		\hline
		& $\Sigma_{1,1}$ ($\|\bphi_{1,1}^{k}(1)\|_\infty$) & $\Sigma_{1,2}$ ($\|\bphi_{1,2}^{k}(1)\|_\infty$) & $\Sigma_{1,3}$ ($\|\bphi_{1,3}^{k}(0)\|_\infty$)\\
		\hline 
		$\bphi^1$ & $1.340863453602736\cdot 10^{-7}$ & $2.406468399840378\cdot 10^{-5}$ & $82.297714100219306$\\
		$\bphi^2$ & $3.559879058201431\cdot 10^{-9}$ & $4.908559887439878\cdot 10^{-7}$ & $1.6765052650080179$\\
		$\bphi^3$ & $1.227905244149951\cdot 10^{-8}$ & $1.432497398923260\cdot 10^{-6}$ & $4.8877803169816616$\\
		$\bphi^4$ & $1.333333535811012\cdot 10^{-12}$ & $4.024366085601110\cdot 10^{-10}$ & $1.378798759620991\cdot 10^{-3}$\\
		\hline
	\end{tabular}
\end{table}

Table~\ref{table1} shows the maximum radius of rigorous inclusion obtained during the analytic continuation along  the loop $\Sigma_1$, which consists of three segments. Starting from the initial values $\bm{e}_k$ (the canonical basis) with the zero radius, the results illustrate how the rigorous integration of the ODEs \eqref{eq:ODEs_on_sigma1} loses precision for the solution along each segment. 
In particular, along the final path $\Sigma_{1,3}$, a significant error accumulates during the rigorous integration. This prevents achieving a rigorous inclusion of the monodromy matrix with the target margin of error less than $0.5$.

To overcome this difficulty, we employ the \texttt{DD} precision (Double-Double precision) of numbers to represent each interval.  The \texttt{DD} precision combines two \texttt{double}-precision floating-point numbers to represent a single value, enabling highly accurate computations with approximately $31$ digits of precision. The \texttt{DD} arithmetic \cite{Bailey1995,QDLibrary} performs operations such as addition, subtraction, multiplication, and division in a layered manner to correct rounding errors and maintain precision. Specifically, it decomposes each value into high-precision (high component) and low-precision (low component) parts, ensuring that results are adjusted to minimize errors. This approach is particularly valuable even in interval arithmetic for obtaining tight interval enclosure, where high precision arithmetic is crucial.
The kv library supports computations with the \texttt{DD} precision, providing four basic arithmetic operations, elementary math functions, and more. In particular, it facilitates the implementation of rigorous ODE integration by using the \texttt{DD} precision to represent the numbers in interval, ensuring both accuracy and tight enclosures in interval arithmetic.

\begin{table}[htbp]
	\caption{Maximum radius of rigorous inclusion of analytic continuation using \texttt{DD} precision.}
	\label{table2}
	\centering
	\begin{tabular}{c|ccc}
		\hline
		& $\Sigma_{1,1}$ ($\|\bphi_{1,1}^{k}(1)\|_\infty$) & $\Sigma_{1,2}$ ($\|\bphi_{1,2}^{k}(1)\|_\infty$) & $\Sigma_{1,3}$ ($\|\bphi_{1,3}^{k}(0)\|_\infty$)\\
		\hline 
		$\bphi^1$ & $5.6843418860808015\cdot 10^{-14}$ & $5.5511151231257836\cdot 10^{-17}$ & $2.3283064365386963\cdot 10^{-10}$\\
		$\bphi^2$ & $2.7755575615628918\cdot 10^{-17}$ & $1.7763568394002502\cdot 10^{-15}$ & $1.1641532182693481\cdot 10^{-10}$\\
		$\bphi^3$ & $1.7763568394002502\cdot 10^{-15}$& $2.8421709430404007\cdot 10^{-14}$ & $2.3283064365386963\cdot 10^{-10}$\\
		$\bphi^4$ & $2.7105054312137606\cdot 10^{-20}$ & $2.1684043449710089\cdot 10^{-19}$ & $2.1684043449710089\cdot 10^{-19}$\\
		\hline
	\end{tabular}
\end{table}

We implemented the rigorous analytic continuation using the \texttt{DD} precision in the file \texttt{find\_monodromy\_path1\_dd.cc}. Table~\ref{table2} displays the result of rigorous integration usin the \texttt{DD} precision, which represents the maximum radius of each inclusion during the analytic continuation. While some loss of precision in the solution occurs along the loop, the resulting error bound remains approximately $2\cdot 10^{-10}$, which is sufficiently small to get our target margin of error for the monodromy matrix.
Consequently, the resulting monodromy matrix is rigorously included in
\begin{align*}
	&{M}_{\Sigma_1}\in\\
	&\left\langle{\small\begin{bmatrix}
			-1.00  + 2.12\cdot 10^{-14}\,\im & -2.00  -3.49\cdot 10^{-10}\,\im & -2.00 -3.38\cdot 10^{-10}\,\im &-1.00 -9.19\cdot 10^{-11}\,\im \\
		4.09\cdot 10^{-15}  & -1.00 -1.48\cdot 10^{-10}\,\im & -1.38\cdot 10^{-17}  -1.48\cdot 10^{-10}\,\im & -7.12\cdot 10^{-18} -7.43\cdot 10^{-11}\,\im \\ 
		-4.12\cdot 10^{-15}  -3.85\cdot 10^{-14}\,\im & 4.00  -1.76\cdot 10^{-13}\,\im & 3.00  -1.27\cdot 10^{-11}\,\im & 2.00 -1.61\cdot 10^{-10}\,\im \\ 
		4.12\cdot 10^{-15} + 3.85\cdot 10^{-14}\,\im &  -4.00  + 1.77\cdot 10^{-13}\,\im &  -4.00 + 1.27\cdot 10^{-11}\,\im & -3.00  + 1.61\cdot 10^{-10}\,\im
		%
			\end{bmatrix}}\right.,\\
	&\left.{\small\begin{bmatrix}
		3.31\cdot 10^{-4} & 2.178\cdot 10^{-3} & 1.08\cdot 10^{-2}  & 3.55\cdot 10^{-4}  \\
		3.31\cdot 10^{-4} & 2.178\cdot 10^{-3} & 1.08\cdot 10^{-2}  & 3.55\cdot 10^{-4}  \\
		3.31\cdot 10^{-4} & 2.178\cdot 10^{-3} & 1.08\cdot 10^{-2}  & 3.55\cdot 10^{-4}  \\
		3.31\cdot 10^{-4} & 2.178\cdot 10^{-3} & 1.08\cdot 10^{-2}  & 3.55\cdot 10^{-4} 
			\end{bmatrix}}\right\rangle.
\end{align*}

From the fact that our target monodromy matrix \eqref{eq:theMonodromyMatrix} is unimodular, as introduced in Section~\ref{Sct:K3}, our computer-assisted approach proves there uniquely exists an integer entries in the above interval inclusions.
Finally, the monodromy matrix is determined as
\[
M_{\Sigma_1} = \begin{bmatrix}
	-1 & -2 & -2 & -1\\ 0 & -1 & 0 & 0\\ 0 & 4 & 3 & 2\\ 0 & -4 & -4 & -3
\end{bmatrix}.
\]
This completes the proofs for the path $\Sigma_1$. For the other paths, the same approach is employed to obtain the monodromy matrices $M_{\Sigma_i}$ ($i=2,\dots,6$) as defined in \eqref{eq:ResultingMonodromy}. These matrices can be obtained by executing the files \texttt{find\_monodromy\_pathi\_dd.cc} in \cite{code}, where \texttt{i} corresponds to the index $i$ labeling each path.


					\section*{Conclusion and Future works}

					In this paper, we provided a rigorous numerical framework for computing the monodromy matrices of the Picard--Fuchs differential equation, with a focus on applications to families of K3 toric hypersurfaces. By employing the Pfaffian equation and rigorous forward integration of ODEs using interval arithmetic, we achieved rigorous analytic continuation of the fundamental system of solutions along predefined contours. This enabled the computation of monodromy matrices with guaranteed precision and rigor, providing new insights into the monodromy properties of linear differential equations. Furthermore, combining the unimodularity of the monodromy matrices with its rigorous inclusion, we obtain a computer-assisted proof for the monodromy problem of the Picard--Fuchs differential equation.

					Our computational framework highlights the potential of computer-assisted proofs in addressing complex problems in algebraic geometry and mathematical physics. Central to our approach is the seamless integration of validated numerics with theoretical constructs.  The kv library, employing Taylor series expansions and affine arithmetic, played a pivotal role in ensuring rigorous inclusion of solution trajectories during forward integration of ODEs. In particular, implementation of the \texttt{DD} arithmetic for rigorous integration was a critical component of our success in achieving computer-assisted proofs.

					This work opens several avenues for future works. One promising direction is the extension of this framework to other differential equations, such as those associated with Calabi--Yau varieties, mirror symmetry, and Hodge theory. By providing a rigorous computational foundation, this approach should bridge the gap between numerical computation and abstract theoretical concepts, facilitating precise and systematic exploration of algebraic and geometric structures. Additionally, our method can be applied to computer-assisted proofs of the nonintegrability of dynamical systems. By combining the computation of monodromy matrices with Morales--Ramis theory \cite{MoralesRuiz1999}, rooted in differential Galois theory, one could rigorously validate properties of monodromy groups, providing a proof of nonintegrability of dynamical systems.

\appendix

\section{Proofs of lemmas in Section \ref{sec:fundamental_lemmas}}\label{sec:appendix_proof}
Here we provide the proofs of the fundamental lemmas \ref{lem:TruncationError1}, \ref{lem:TruncationError2}, and \ref{lem:TruncationError3} in Section \ref{sec:fundamental_lemmas}. First of all, we prepare Stirling's approximation for factorials as follows:
\begin{lem}[Stirling's approximation for factorials, e.g., \cite{Dutka1984}]
	Let $n$ be a positive integer. The following inequalities for factorials holds:
	\begin{equation}\label{eq:Stirling}
		\sqrt{2\pi}\,n^{n+\frac{1}{2}}e^{-n}\le n!\le n^{n+\frac{1}{2}}e^{-n+1}.
	\end{equation}
\end{lem}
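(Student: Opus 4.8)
The plan is to prove the two inequalities in \eqref{eq:Stirling} simultaneously by studying the monotone behaviour of the sequence
\[
a_n \bydef \frac{n!}{n^{n+\frac12}e^{-n}},\qquad n\ge 1,
\]
and then identifying its limit. Since $a_1=e$, the claimed upper bound $n!\le n^{n+\frac12}e^{-n+1}$ is exactly $a_n\le a_1$, and the claimed lower bound $\sqrt{2\pi}\,n^{n+\frac12}e^{-n}\le n!$ is exactly $\sqrt{2\pi}\le a_n$. Hence it suffices to show that $(a_n)$ is strictly decreasing and that $\lim_{n\to\infty}a_n=\sqrt{2\pi}$, for then $\sqrt{2\pi}\le a_n\le a_1=e$ for every $n\ge1$, which is precisely \eqref{eq:Stirling}.

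First I would compute, using $(n+1)!=(n+1)\,n!$ and collecting the logarithmic terms,
\[
\log\frac{a_n}{a_{n+1}}=\Bigl(n+\tfrac12\Bigr)\log\frac{n+1}{n}-1.
\]
Writing $\log\frac{n+1}{n}=2\,\mathrm{artanh}\frac1{2n+1}=2\sum_{j\ge0}\frac1{(2j+1)(2n+1)^{2j+1}}$ and using $n+\frac12=\frac12(2n+1)$ to absorb the prefactor gives the telescoping-friendly identity
\[
\log\frac{a_n}{a_{n+1}}=\sum_{j\ge1}\frac1{(2j+1)(2n+1)^{2j}}>0,
\]
so $(a_n)$ is strictly decreasing. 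Comparing this series with a geometric one yields $0<\log\frac{a_n}{a_{n+1}}\le\frac13\cdot\frac{(2n+1)^{-2}}{1-(2n+1)^{-2}}=\frac1{12n(n+1)}=\frac1{12}\bigl(\frac1n-\frac1{n+1}\bigr)$, whose partial sums stay below $\frac1{12}$. Therefore $\log a_n$ is decreasing and bounded below, so $(a_n)$ converges to some $C>0$.

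The last step is to identify $C$. Here I would invoke the Wallis product in the form $\lim_{n\to\infty}\frac{2^{4n}(n!)^4}{((2n)!)^2(2n+1)}=\frac{\pi}{2}$; substituting $n!=a_n\,n^{n+\frac12}e^{-n}$ and $(2n)!=a_{2n}\,(2n)^{2n+\frac12}e^{-2n}$ into the left-hand side, all powers of $n$ and $e$ cancel and one is left with $\frac{C^2}{4}=\frac{\pi}{2}$, i.e.\ $C=\sqrt{2\pi}$. The only mildly delicate ingredient is this Wallis-product evaluation of the constant; the rest is an elementary manipulation of logarithms together with a single geometric-series estimate, and since the statement is classical one may alternatively just cite \cite{Dutka1984}.
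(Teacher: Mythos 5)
Your argument is correct, and it is worth noting that the paper itself does not prove this lemma at all: it is stated as a classical fact with a pointer to the literature (\cite{Dutka1984}), so your proposal supplies a self-contained proof where the paper relies on a citation. Your route is the standard one and all the steps check out: with $a_n = n!\,n^{-(n+\frac12)}e^{n}$ the two claimed bounds are exactly $\sqrt{2\pi}\le a_n\le a_1=e$; the identity $\log\frac{a_n}{a_{n+1}}=(n+\tfrac12)\log\frac{n+1}{n}-1=\sum_{j\ge1}\frac{1}{(2j+1)(2n+1)^{2j}}$ is correct and immediately gives strict monotonicity, and the geometric-series bound $\log\frac{a_n}{a_{n+1}}\le\frac{1}{12n(n+1)}$ telescopes to show $\log a_n\ge \log a_1-\frac1{12}$, hence convergence to some $C>0$; finally the Wallis-product computation $\frac{2^{4n}(n!)^4}{((2n)!)^2(2n+1)}\to\frac{\pi}{2}$ does reduce, after substituting $n!=a_n n^{n+\frac12}e^{-n}$ and $(2n)!=a_{2n}(2n)^{2n+\frac12}e^{-2n}$, to $\frac{C^2}{4}=\frac{\pi}{2}$, i.e.\ $C=\sqrt{2\pi}$, and strict decrease to the limit even yields the strict inequality $a_n>\sqrt{2\pi}$. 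The only external ingredient is the Wallis product, which is itself elementary (e.g.\ via $\int_0^{\pi/2}\sin^k x\,\rd x$), so your proof is genuinely self-contained; what the paper's citation buys instead is brevity, since the lemma is classical and its proof is orthogonal to the paper's contribution.
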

Let us begin by considering the proof of Lemma \ref{lem:TruncationError1}.
\begin{proof}[Proof of Lemma \ref{lem:TruncationError1}]
	First, we obtain the estimate \eqref{eq:delta1} using \eqref{eq:a_lm}, the binomial theorem, and elementary calculations for the geometric sequence as follows:
	\begin{align}
		\left|\sum_{l+m\ge N+1}a_{l,m}\lambda^l \mu^m\right| &= \left|\sum_{n=N+1}^{\infty}\left(\sum_{l+m=n}a_{l,m}\lambda^l \mu^m\right)\right|\\
		&\le \sum_{n=N+1}^{\infty}\left(\sum_{l+m=n}\frac{(2l+4m)!}{(l+m)!\,l!\left(m!\right)^3}|\lambda|^l |\mu|^m\right)\\
		&= \sum_{n=N+1}^{\infty}\left(\sum_{l+m=n}\frac{(2l+4m)!}{\left((l+m)!\right)^2\left(m!\right)^2}\frac{(l+m)!}{l!\,m!}|\lambda|^l |\mu|^m\right)\\
		&\le \sum_{n=N+1}^{\infty}\frac{1}{\left(n!\right)^2}\max_{0\le m\le n}\left(\frac{(2n+2m)!}{\left(m!\right)^2}\right)\left(|\lambda| + |\mu|\right)^n\\
		&= \sum_{n=N+1}^{\infty}\frac{(4n)!}{\left(n!\right)^4}\left(|\lambda| + |\mu|\right)^n\\
		&\le\frac{(4(N+1))!}{\left((N+1)!\right)^4}(|\lambda|+|\mu|)^{N+1}\sum_{n'=0}^{\infty}\left[256\left(|\lambda| + |\mu|\right)\right]^{n'}\\
		&\le\frac{(4(N+1))!}{\left((N+1)!\right)^4}\frac{(|\lambda|+|\mu|)^{N+1}}{1-256(|\lambda|+|\mu|)}=\delta_1(N,\lambda,\mu),\label{eq:estimate_delta_1}
	\end{align}
	where we used the fact that $\beta_n>1$, and for $n\ge 0$, it forms a geometric series with a common ratio of 
	\begin{align}
		\frac{(4n+4)!}{\left((n+1)!\right)^4}\cdot\frac{\left(n!\right)^4}{(4n)!}&=\frac{(4n+4)(4n+3)(4n+2)(4n+1)}{(n+1)^4}\\
		&=4\left(4-\frac{1}{n+1}\right)\left(4-\frac{2}{n+1}\right)\left(4-\frac{3}{n+1}\right)\le 256.\label{eq:alpha_n}
	\end{align}
	
	Second, for $l+m\ge n$, we have from \eqref{eq:b_lm}
	\begin{align*}
		b_{l,m}&= \sum_{j=1}^{2l+4m}\frac{4}{j}-\sum_{j=1}^{l+m}\frac{1}{j}-\sum_{j=1}^{m}\frac{3}{j}\\
		&=\sum_{j=l+m+1}^{2l+4m}\frac{4}{j}+\sum_{j=m+1}^{l+m}\frac{3}{j}\\
		&\le \sum_{j=n+1}^{4n}\frac{4}{j}+\sum_{j=1}^{n}\frac{3}{j}\\
		&<\int_{n}^{4n}\frac{4}{x}dx + 3 + \int_{1}^{n}\frac{3}{x}dx=4\log 4 + 3 + 3\log n.
	\end{align*}
	We observe that
	\begin{align}
		\frac{4\log 4 + 3 + 3\log (n+1)}{4\log 4 + 3 + 3\log n}
		&=\frac{4\log 4 + 3 + 3\log n + 3\log \left(\frac{n+1}{n}\right)}{4\log 4 + 3 + 3\log n}\\
		&=1 + \frac{3\log \left(1+\frac{1}{n}\right)}{4\log 4 + 3+ 3\log n}=\beta_n.\label{eq:beta_n}
	\end{align}
	Since $\beta_n$ is monotonically decreasing with respect to $n$, it follows that $\beta_n\le \beta_N$ holds for any $n\ge N$.
	The estimate \eqref{eq:delta2} is obtained by using \eqref{eq:alpha_n}, \eqref{eq:beta_n} and the similar calculations used in \eqref{eq:estimate_delta_1}
	\begin{align}
		\left|\sum_{l+m\ge N+1}a_{l,m}b_{l,m}\lambda^l \mu^m\right| &= \left|\sum_{n=N+1}^{\infty}\left(\sum_{l+m=n}a_{l,m}b_{l,m}\lambda^l \mu^m\right)\right|\\
		&\le 
		\sum_{n=N+1}^{\infty}\frac{(4n)!}{\left(n!\right)^4}\left(4\log 4 + 3 + 3\log n\right)\left(|\lambda| + |\mu|\right)^n\\
		&\le \frac{(4(N+1))!}{\left((N+1)!\right)^4}\left(4\log 4 + 3 + 3\log (N+1)\right)(|\lambda|+|\mu|)^{N+1}\sum_{n'=0}^{\infty}\left[256\beta_{\revise{N+1}}(|\lambda|+|\mu|)\right]^{n'}\\
		&\le \frac{(4(N+1))!}{\left((N+1)!\right)^4}\left(4\log 4 + 3 + 3\log (N+1)\right)\frac{(|\lambda|+|\mu|)^{N+1}}{1-256\beta_{\revise{N+1}}(|\lambda|+|\mu|)}\\
		&=\delta_2(N,\lambda,\mu).\label{eq:estimate_delta2}
	\end{align}
	
	Third, the estimate \eqref{eq:delta3} is given by an analogue of the above, that is
	\begin{align}
		\left|\sum_{l+m\ge N+1}a_{l,m}b_{l,m}^2\lambda^l \mu^m\right| &= \left|\sum_{n=N+1}^{\infty}\left(\sum_{l+m=n}a_{l,m}b_{l,m}^2\lambda^l \mu^m\right)\right|\\
		&\le 
		\sum_{n=N+1}^{\infty}\frac{(4n)!}{\left(n!\right)^4}\left(4\log 4 + 3 + 3\log n\right)^2\left(|\lambda| + |\mu|\right)^n\\
		&\le \frac{(4(N+1))!}{\left((N+1)!\right)^4}\left(4\log 4 + 3 + 3\log (N+1)\right)^2(|\lambda|+|\mu|)^{N+1}\sum_{n'=0}^{\infty}\left[256\beta_{\revise{N+1}}^2(|\lambda|+|\mu|)\right]^{n'}\\
		&\le\frac{(4(N+1))!}{\left((N+1)!\right)^4}\left(4\log 4 + 3 + 3\log (N+1)\right)^2\frac{(|\lambda|+|\mu|)^{N+1}}{1-256\beta_{\revise{N+1}}^2(|\lambda|+|\mu|)}=\delta_3(N,\lambda,\mu).\label{eq:estimate_delta3}
	\end{align}
	
	Fourth, for $l+m\ge n$, we have from \eqref{eq:c_lm}
	\begin{align}\label{eq:c_estimate}
		c_{l,m}&= \sum_{j=1}^{2l+4m}\frac{16}{j^2}-\sum_{j=1}^{l+m}\frac{1}{j^2}-\sum_{j=1}^{m}\frac{3}{j^2}\le \sum_{j=1}^{2n+2m}\frac{16}{j^2}\le 16\zeta(2)=\frac{8}{3}\pi^2,
	\end{align}
	where $\zeta(s)$ is the Riemann zeta function defined by $\zeta(s)\bydef\sum_{n=1}^{\infty} \frac{1}{n^{s}}$.
	Then we have the estimate \eqref{eq:delta4} using \eqref{eq:estimate_delta_1} and \eqref{eq:c_estimate}
	\begin{align}
		\left|\sum_{l+m\ge N+1}a_{l,m}c_{l,m}\lambda^l \mu^m\right| &= \left|\sum_{n=N+1}^{\infty}\left(\sum_{l+m=n}a_{l,m}c_{l,m}\lambda^l \mu^m\right)\right|\\
		&\le \frac{8}{3}\pi^2\sum_{n=N+1}^{\infty}\frac{(4n)!}{\left(n!\right)^4}\left(|\lambda| + |\mu|\right)^n\\
		&\le \frac{8}{3}\pi^2\frac{(4(N+1))!}{\left((N+1)!\right)^4}\frac{(|\lambda|+|\mu|)^{N+1}}{1-256(|\lambda|+|\mu|)}=\delta_4(N,\lambda,\mu).\label{eq:estimate_delta4}
	\end{align}
	
	Finally, we consider the estimate \eqref{eq:delta5}. For $\epsilon\in\{1/2,1\}$ it follows from \eqref{eq:d_lm} and the fact that the gamma function $\Gamma(x)$ is monotonically increasing for positive integers $x$
	\begin{align}
		\left|\sum_{l+m\ge N+1}d_{l+\epsilon,m}\frac{\lambda^{l+2m+\epsilon}}{\mu^{l+m+\epsilon}}\right|
		&= \left|\sum_{n=N+1}^\infty\left(\sum_{l+m=n}d_{l+\epsilon,m}\frac{\lambda^{l+2m+\epsilon}}{\mu^{l+m+\epsilon}}\right)\right|\nonumber\\
		&\le\left|\frac{\lambda}{\mu}\right|^\epsilon\left|\sum_{n=N+1}^\infty\left(\sum_{l+m=n}d_{l+\epsilon,m}\left(\frac{\lambda}{\mu}\right)^{l+m}\lambda^m\right)\right|\nonumber\\
		&\le\left|\frac{\lambda}{\mu}\right|^\epsilon\sum_{n=N+1}^\infty\left|\frac{\lambda}{\mu}\right|^{n}\sum_{l+m=n}\frac{\Gamma(l+m+\epsilon)^3\,l!}{\Gamma(2(l+\epsilon))\Gamma(l+2m+\epsilon+1)(l+m)!}\frac{(l+m)!}{m!\,l!}|\lambda|^m\nonumber\\
		&\le\left|\frac{\lambda}{\mu}\right|^\epsilon\sum_{n=N+1}^\infty\left|\frac{\lambda}{\mu}\right|^{n}\left(1+|\lambda|\right)^n\sum_{l+m=n}\frac{(n!)^2\,l!}{(2l)!\,(n+m)!}.\label{eq:(v)_1}
	\end{align}
	Here, from \eqref{eq:Stirling}, we have the following estimates:
	\[
	\begin{aligned}
		\left(n!\right)^2&\le n^{2n+1}e^{-2n+2}\\
		l!&\le l^{l+\frac{1}{2}}e^{-l+1}\\
		\left(2l\right)! &\ge \sqrt{2\pi}\,(2l)^{2l+\frac{1}{2}}e^{-2l}\\
		(n+m)!&\ge \sqrt{2\pi}\,(n+m)^{n+m+\frac{1}{2}}e^{-(n+m)}.
	\end{aligned}
	\]
	Using these estimates we obtain an upper bound for \eqref{eq:(v)_1}
	\begin{align}
		\left|\sum_{l+m\ge N+1}d_{l+\epsilon,m}\frac{\lambda^{l+2m+\epsilon}}{\mu^{l+m+\epsilon}}\right|\nonumber
		&\le\left|\frac{\lambda}{\mu}\right|^\epsilon\sum_{n=N+1}^\infty\left|\frac{\lambda}{\mu}\right|^{n}\left(1+|\lambda|\right)^n\sum_{l+m=n}\frac{(n!)^2\,l!}{(2l)!\,(n+m)!}\nonumber\\
		&\le\left|\frac{\lambda}{\mu}\right|^\epsilon\sum_{n=N+1}^\infty\left|\frac{\lambda}{\mu}\right|^{n}\left(1+|\lambda|\right)^n\sum_{l+m=n}\frac{n^{2n+1}e^{-2n+2}l^{l+\frac{1}{2}}e^{-l+1}}{2\pi (2l)^{2l+\frac{1}{2}}e^{-2l}(n+m)^{n+m+\frac{1}{2}}e^{-(n+m)}}\nonumber\\
		&=\left|\frac{\lambda}{\mu}\right|^\epsilon\sum_{n=N+1}^\infty\left|\frac{\lambda}{\mu}\right|^{n}\left(1+|\lambda|\right)^n\sum_{l+m=n}\frac{e^3}{2\sqrt{2}\,\pi}\frac{1}{2^{2l}}\frac{n^{2n+1}}{l^l(n+m)^{n+m+\frac{1}{2}}}\nonumber\\
		&\le\left|\frac{\lambda}{\mu}\right|^\epsilon\sum_{n=N+1}^\infty\left|\frac{\lambda}{\mu}\right|^{n}\left(1+|\lambda|\right)^n\sum_{l+m=n}\frac{e^3}{2\sqrt{2}\,\pi}\frac{1}{2^{2l}}\frac{n^{2n+\frac{1}{2}}}{l^l(n+m)^{n+m}}.\label{eq:(v)_2}
	\end{align}
	
	Furthermore, we consider the minimization problem for the denominator of \eqref{eq:(v)_2}, which involves finding $x\bydef l$ and $y\bydef n+m$ such that
	\begin{align*}
		\mbox{minimize}~\log\left(2^{2x}x^xy^y\right)\quad \mbox{subject to}~ x+y=2n.
	\end{align*}
	Using the method of Lagrange multipliers, the solution $(x,y)$ is determined by finding a stationary point of the Lagrangian function
	\[
	f(x,y)\bydef 2x\log 2 + x\log x + y\log y - \lambda(x+y),
	\]
	where $\lambda$ is the Lagrange multiplier. The stationary point satisfies the following system of equations for the derivatives:
	\[
	\begin{cases}
		\partial_x f = 2\log 2 + \log x + 1 - \lambda = 0,\\
		\partial_y f = \log y + 1 - \lambda = 0.
	\end{cases}
	\]
	Solving this system yields the relation $\log {y} = 2\log 2 + \log {x}$, implying ${y}=4{x}$. 
	From the definition of $x$ and $y$, we have $4l=n+m=2n-l$ at the stationary point, leading to \revise{the optimal value} $\revise{l^\ast}=2n/5$.
	
	Using this stationary point, we have an upper bound for \eqref{eq:(v)_2}
	\begin{align}
		\left|\sum_{l+m\ge N+1}d_{l+\epsilon,m}\frac{\lambda^{l+2m+\epsilon}}{\mu^{l+m+\epsilon}}\right|\nonumber
		&\le\left|\frac{\lambda}{\mu}\right|^\epsilon\sum_{n=N+1}^\infty\left|\frac{\lambda}{\mu}\right|^{n}\left(1+|\lambda|\right)^n\sum_{l+m=n}\frac{e^3}{2\sqrt{2}\,\pi}\frac{1}{2^{2l}}\frac{n^{2n+\frac{1}{2}}}{l^l(n+m)^{n+m}}\\
		&\le\left|\frac{\lambda}{\mu}\right|^\epsilon\sum_{n=N+1}^\infty\left|\frac{\lambda}{\mu}\right|^{n}\left(1+|\lambda|\right)^n\frac{e^3}{2\sqrt{2}\,\pi}\frac{n^{2n+\frac{1}{2}}}{4^{\revise{l^\ast}}\revise{l^\ast}^\revise{l^\ast}(4\revise{l^\ast})^{4\revise{l^\ast}}}\\
		&=\left|\frac{\lambda}{\mu}\right|^\epsilon\sum_{n=N+1}^\infty\left|\frac{\lambda}{\mu}\right|^{n}\left(1+|\lambda|\right)^n\frac{e^3}{2\sqrt{2}\,\pi}\frac{n^{2n+\frac{1}{2}}}{(8n/5)^{2n}}\\
		&=\frac{e^3}{2\sqrt{2}\,\pi}\left|\frac{\lambda}{\mu}\right|^\epsilon\sum_{n=N+1}^\infty\left|\frac{\lambda}{\mu}\right|^{n}\left(1+|\lambda|\right)^n\sqrt{n}\,\left(\frac{25}{64}\right)^n\\
		&\le \frac{e^3}{2\sqrt{2}\,\pi}\left|\frac{\lambda}{\mu}\right|^\epsilon\sqrt{N+1}\left[\frac{25}{64}\left(\frac{|\lambda|+|\lambda|^2}{|\mu|}\right)\right]^{N+1}\sum_{n'=0}^\infty\left(\frac{|\lambda|+|\lambda|^2}{|\mu|}\right)^{n'}\left(\frac{25}{64}\sqrt{1+\frac{1}{\revise{N+1}}}\right)^{n'}\\
		&\le  \frac{e^3}{2\sqrt{2}\,\pi}\left|\frac{\lambda}{\mu}\right|^\epsilon\frac{\sqrt{N+1}\left[\frac{25}{64}\left(\frac{|\lambda|+|\lambda|^2}{|\mu|}\right)\right]^{N+1}}{1-\frac{25}{64}\sqrt{1+\frac{1}{\revise{N+1}}}\left(\frac{|\lambda|+|\lambda|^2}{|\mu|}\right)}=\delta_5^\epsilon(N,\lambda,\mu).\label{eq:estimate_delta5}
	\end{align}
	This completes the proof.
\end{proof}


Next, we present the proof of Lemma \ref{lem:TruncationError2}, which concerns the first partial derivatives of each fundamental solution as defined in Theorem \ref{thm:Fundamantal_sol}.

\begin{proof}[Proof of Lemma \ref{lem:TruncationError2}]
	First, we derive the estimate \eqref{eq:delta6}, which follows a similar argument to \revise{the one used for \eqref{eq:estimate_delta_1}} in the proof of Lemma \ref{lem:TruncationError1}. Using \eqref{eq:a_lm}, the binomial theorem, and elementary calculations for the geometric sequence, it follows that
	\begin{align}
		\left|\sum_{l+m\ge N+1}la_{l,m}\lambda^{l-1} \mu^m\right|
		&= \left|\sum_{n=N+1}^{\infty}\left(\sum_{l+m=n}la_{l,m}\lambda^{l-1} \mu^m\right)\right|\\
		&\le \sum_{n=N+1}^{\infty}\left(\sum_{l+m=n}l\frac{(2l+4m)!}{(l+m)!\,l!\left(m!\right)^3}|\lambda|^{l-1} |\mu|^m\right)\\
		&= \sum_{n=N+1}^{\infty}\left(\sum_{l+m=n\atop l\ge1}\frac{(2l+4m)!}{(l+m-1)!(l+m)!\left(m!\right)^2}\frac{(l+m-1)!}{(l-1)!\,m!}|\lambda|^{l-1} |\mu|^m\right)\\
		&\le \sum_{n=N+1}^{\infty}\frac{1}{\left(n-1\right)!n!}\max_{0\le m\le n}\left(\frac{(2n+2m)!}{\left(m!\right)^2}\right)\left(|\lambda| + |\mu|\right)^{n-1}\\
		&= \sum_{n=N+1}^{\infty}\frac{(4n)!}{(n-1)!\left(n!\right)^3}\left(|\lambda| + |\mu|\right)^{n-1}\\
		&\le \frac{(4(N+1))!}{N!\left((N+1)!\right)^3}\left(|\lambda| + |\mu|\right)^{N} \sum_{n'=0}^{\infty}\left[\gamma_{\revise{N+1}}(|\lambda|+|\mu|)\right]^{n'}\\
		&\le\frac{(4(N+1))!}{N!\left((N+1)!\right)^3}\frac{(|\lambda|+|\mu|)^{N}}{1-\gamma_{\revise{N+1}}(|\lambda|+|\mu|)}=\delta_6(N,\lambda,\mu),
	\end{align}
	where we used the fact that\revise{, for $n=N+1, N+2, \dots$,} the infinite sum is bounded by a geometric series with a common ratio of
	\begin{align*}
		\frac{(4n+4)!}{n!\left((n+1)!\right)^3}\cdot\frac{(n-1)!\left(n!\right)^3}{(4n)!}&=\frac{(4n+4)(4n+3)(4n+2)(4n+1)}{n(n+1)^3}\\
		&=4\left(4-\frac{1}{n+1}\right)\left(4-\frac{2}{n+1}\right)\left(4+\frac{1}{n}\right)\\
		&\le 64\left(4+\frac1{\revise{N+1}}\right)=\gamma_{\revise{N+1}}.
	\end{align*}
	Similarly, we have
	\begin{align}
		\left|\sum_{l+m\ge N+1}ma_{l,m}\lambda^{l} \mu^{m-1}\right| 
		&= \left|\sum_{n=N+1}^{\infty}\left(\sum_{l+m=n}ma_{l,m}\lambda^{l} \mu^{m-1}\right)\right|\\
		&\le \sum_{n=N+1}^{\infty}\left(\sum_{l+m=n}m\frac{(2l+4m)!}{(l+m)!\,l!\left(m!\right)^3}|\lambda|^{l} |\mu|^{m-1}\right)\\
		&= \sum_{n=N+1}^{\infty}\left(\sum_{l+m=n\atop m\ge1}\frac{(2l+4m)!}{(l+m-1)!(l+m)!\left(m!\right)^2}\frac{(l+m-1)!}{l!\,(m-1)!}|\lambda|^{l} |\mu|^{m-1}\right)\\
		&\le \sum_{n=N+1}^{\infty}\frac{1}{\left(n-1\right)!n!}\max_{0\le m\le n}\left(\frac{(2n+2m)!}{\left(m!\right)^2}\right)\left(|\lambda| + |\mu|\right)^{n-1}\\
		&= \sum_{n=N+1}^{\infty}\frac{(4n)!}{(n-1)!\left(n!\right)^3}\left(|\lambda| + |\mu|\right)^{n-1}\\
		&\le\frac{(4(N+1))!}{N!\left((N+1)!\right)^3}\frac{(|\lambda|+|\mu|)^{N}}{1-\gamma_{\revise{N+1}}(|\lambda|+|\mu|)}=\delta_6(N,\lambda,\mu).
	\end{align}

	Second, we derive the $\delta_7$ bound in \eqref{eq:delta7}, which follows an \revise{analogous argument to the one used for \eqref{eq:estimate_delta2}} in the proof of Lemma \ref{lem:TruncationError1}. The binomial theorem and basic calculations for the geometric sequence also provide the following:
	\begin{align}
		\left|\sum_{l+m\ge N+1}la_{l,m}b_{l,m}\lambda^{l-1} \mu^m\right|
		&= \left|\sum_{n=N+1}^{\infty}\left(\sum_{l+m=n}la_{l,m}b_{l,m}\lambda^{l-1} \mu^m\right)\right|\\
		&\le \sum_{n=N+1}^{\infty}\left(\sum_{l+m=n}l\frac{(2l+4m)!}{(l+m)!\,l!\left(m!\right)^3}b_{l,m}|\lambda|^{l-1} |\mu|^m\right)\\
		&\le\sum_{n=N+1}^{\infty}\frac{(4n)!}{(n-1)!\left(n!\right)^3}\left(4\log 4+3+3\log n\right)\left(|\lambda| + |\mu|\right)^{n-1}\\
		&\le\frac{(4(N+1))!}{N!\left((N+1)!\right)^3}\frac{\left(4\log 4 + 3 + 3\log (N+1)\right)(|\lambda|+|\mu|)^{N}}{1-\beta_{\revise{N+1}}\gamma_{\revise{N+1}}(|\lambda|+|\mu|)}=\delta_7(N,\lambda,\mu).
	\end{align}
	Correspondingly, we have
	\begin{align}
		\left|\sum_{l+m\ge N+1}ma_{l,m}b_{l,m}\lambda^{l} \mu^{m-1}\right| 
		&= \left|\sum_{n=N+1}^{\infty}\left(\sum_{l+m=n}ma_{l,m}b_{l,m}\lambda^{l} \mu^{m-1}\right)\right|\\
		&\le \sum_{n=N+1}^{\infty}\frac{(4n)!}{(n-1)!\left(n!\right)^3}\left(4\log 4+3+3\log n\right)\left(|\lambda| + |\mu|\right)^{n-1}\\
		&\le\frac{(4(N+1))!}{N!\left((N+1)!\right)^3}\frac{\left(4\log 4 + 3 + 3\log (N+1)\right)(|\lambda|+|\mu|)^{N}}{1-\beta_{\revise{N+1}}\gamma_{\revise{N+1}}(|\lambda|+|\mu|)}=\delta_7(N,\lambda,\mu).
	\end{align}

	Third, $\delta_8$ bound is derived using an argument analogous to \revise{the one used for \eqref{eq:estimate_delta3}} in the proof of Lemma \ref{lem:TruncationError1}, that is
	\begin{align}
		\left|\sum_{l+m\ge N+1}la_{l,m}b_{l,m}^2\lambda^{l-1} \mu^m\right|
		&= \left|\sum_{n=N+1}^{\infty}\left(\sum_{l+m=n}la_{l,m}b_{l,m}^2\lambda^{l-1} \mu^m\right)\right|\\
		&\le\sum_{n=N+1}^{\infty}\frac{(4n)!}{(n-1)!\left(n!\right)^3}\left(4\log 4+3+3\log n\right)^2\left(|\lambda| + |\mu|\right)^{n-1}\\
		&\le\frac{(4(N+1))!}{N!\left((N+1)!\right)^3}\frac{\left(4\log 4 + 3 + 3\log (N+1)\right)^2(|\lambda|+|\mu|)^{N}}{1-\beta_{\revise{N+1}}^2\gamma_{\revise{N+1}}(|\lambda|+|\mu|)}=\delta_8(N,\lambda,\mu).
	\end{align}
	Additionally,
	\begin{align}
		\left|\sum_{l+m\ge N+1}ma_{l,m}b_{l,m}^2\lambda^{l} \mu^{m-1}\right| 
		&= \left|\sum_{n=N+1}^{\infty}\left(\sum_{l+m=n}ma_{l,m}b_{l,m}^2\lambda^{l} \mu^{m-1}\right)\right|\\
		&\le \sum_{n=N+1}^{\infty}\frac{(4n)!}{(n-1)!\left(n!\right)^3}\left(4\log 4+3+3\log n\right)^2\left(|\lambda| + |\mu|\right)^{n-1}\\
		&\le\frac{(4(N+1))!}{N!\left((N+1)!\right)^3}\frac{\left(4\log 4 + 3 + 3\log (N+1)\right)^2(|\lambda|+|\mu|)^{N}}{1-\beta_{\revise{N+1}}^2\gamma_{\revise{N+1}}(|\lambda|+|\mu|)}=\delta_8(N,\lambda,\mu).
	\end{align}

	Fourth, the argument similar to that used to derive \eqref{eq:estimate_delta4} yields that
	\begin{align}
		\left|\sum_{l+m\ge N+1}la_{l,m}c_{l,m}\lambda^{l-1} \mu^m\right|
		&= \left|\sum_{n=N+1}^{\infty}\left(\sum_{l+m=n}la_{l,m}c_{l,m}\lambda^{l-1} \mu^m\right)\right|\\
		&\le\frac83\pi^2\sum_{n=N+1}^{\infty}\frac{(4n)!}{(n-1)!\left(n!\right)^3}\left(|\lambda| + |\mu|\right)^{n-1}\\
		&\le\frac83\pi^2\frac{(4(N+1))!}{N!\left((N+1)!\right)^3}\frac{(|\lambda|+|\mu|)^{N}}{1-\gamma_{\revise{N+1}}(|\lambda|+|\mu|)}=\delta_9(N,\lambda,\mu)
	\end{align}
	and
	\begin{align}
		\left|\sum_{l+m\ge N+1}ma_{l,m}c_{l,m}\lambda^{l} \mu^{m-1}\right| 
		&= \left|\sum_{n=N+1}^{\infty}\left(\sum_{l+m=n}ma_{l,m}c_{l,m}\lambda^{l} \mu^{m-1}\right)\right|\\
		&\le\frac83\pi^2\sum_{n=N+1}^{\infty}\frac{(4n)!}{(n-1)!\left(n!\right)^3}\left(|\lambda| + |\mu|\right)^{n-1}\\
		&\le\frac83\pi^2\frac{(4(N+1))!}{N!\left((N+1)!\right)^3}\frac{(|\lambda|+|\mu|)^{N}}{1-\gamma_{\revise{N+1}}(|\lambda|+|\mu|)}=\delta_9(N,\lambda,\mu).
	\end{align}
	
	Fifth, we consider the estimate \eqref{eq:delta10}. For $\epsilon\in\{1/2,1\}$ we follow the similar estimate used in \eqref{eq:(v)_1}, \eqref{eq:(v)_2}, and \eqref{eq:estimate_delta5}
	\begin{align}
		&\left|\sum_{l+m\ge N+1}d_{l+\epsilon,m}(l+2m+\epsilon)\frac{\lambda^{l+2m+\epsilon-1}}{\mu^{l+m+\epsilon}}\right|\nonumber\\
		&= \left|\sum_{n=N+1}^\infty\left(\sum_{l+m=n}d_{l+\epsilon,m}(l+2m+\epsilon)\frac{\lambda^{l+2m+\epsilon-1}}{\mu^{l+m+\epsilon}}\right)\right|\nonumber\\
		&\le\left|\frac{\lambda}{\mu}\right|^\epsilon|\lambda|^{-1}\left|\sum_{n=N+1}^\infty\left(\sum_{l+m=n}d_{l+\epsilon,m}(l+2m+\epsilon)\left(\frac{\lambda}{\mu}\right)^{l+m}\lambda^m\right)\right|\nonumber\\
		&\le\left|\frac{\lambda}{\mu}\right|^\epsilon|\lambda|^{-1}\sum_{n=N+1}^\infty\left|\frac{\lambda}{\mu}\right|^{n}\left(1+|\lambda|\right)^n\sum_{l+m=n}\frac{(n!)^2\,l!(n+m+\epsilon)}{(2l)!\,(n+m)!}\\
		&\le\left|\frac{\lambda}{\mu}\right|^\epsilon|\lambda|^{-1}\sum_{n=N+1}^\infty\left|\frac{\lambda}{\mu}\right|^{n}\left(1+|\lambda|\right)^n\sum_{l+m=n}\frac{e^3}{2\sqrt{2}\,\pi}\frac{1}{2^{2l}}\frac{n^{2n+\frac{1}{2}}(n+m+\epsilon)}{l^l(n+m)^{n+m}}\\
		&\le\frac{e^3}{2\sqrt{2}\,\pi}\left|\frac{\lambda}{\mu}\right|^\epsilon|\lambda|^{-1}\sum_{n=N+1}^\infty\left|\frac{\lambda}{\mu}\right|^{n}\left(1+|\lambda|\right)^n\sqrt{n}\,(2n+\epsilon)\left(\frac{25}{64}\right)^n\\
		&\le \frac{e^3}{2\sqrt{2}\,\pi}\left|\frac{\lambda}{\mu}\right|^\epsilon|\lambda|^{-1}\sqrt{N+1}\,(2(N+1)+\epsilon)\left[\frac{25}{64}\left(\frac{|\lambda|+|\lambda|^2}{|\mu|}\right)\right]^{N+1}\\
		&\hphantom{\le}\quad\cdot\sum_{n'=0}^\infty\left(\frac{|\lambda|+|\lambda|^2}{|\mu|}\right)^{n'}\left[\frac{25}{64}\left(1+\frac{2}{2(\revise{N+1})+\epsilon}\right)\sqrt{1+\frac{1}{\revise{N+1}}}\,\right]^{n'}\\
		&\le  \frac{e^3}{2\sqrt{2}\,\pi}\left|\frac{\lambda}{\mu}\right|^\epsilon|\lambda|^{-1}\frac{\sqrt{N+1}\,(2(N+1)+\epsilon)\left[\frac{25}{64}\left(\frac{|\lambda|+|\lambda|^2}{|\mu|}\right)\right]^{N+1}}{1-\eta_{\revise{N+1}}^\epsilon\left(\frac{|\lambda|+|\lambda|^2}{|\mu|}\right)}=\delta_{10}^\epsilon(N,\lambda,\mu).
	\end{align}
	Finally, \revise{a similar argument yields the $\delta_{11}$ bound} 
	\begin{align}
		&\left|\sum_{l+m\ge N+1}d_{l+\epsilon,m}(l+m+\epsilon)\frac{\lambda^{l+2m+\epsilon}}{\mu^{l+m+\epsilon+1}}\right|\nonumber\\
		&= \left|\sum_{n=N+1}^\infty\left(\sum_{l+m=n}d_{l+\epsilon,m}(l+m+\epsilon)\frac{\lambda^{l+m+\epsilon}}{\mu^{l+m+\epsilon+1}}\right)\right|\nonumber\\
		&\le\left|\frac{\lambda}{\mu}\right|^\epsilon|\mu|^{-1}\left|\sum_{n=N+1}^\infty\left(\sum_{l+m=n}d_{l+\epsilon,m}(l+m+\epsilon)\left(\frac{\lambda}{\mu}\right)^{l+m}\lambda^m\right)\right|\nonumber\\
		&\le\left|\frac{\lambda}{\mu}\right|^\epsilon|\mu|^{-1}\sum_{n=N+1}^\infty\left|\frac{\lambda}{\mu}\right|^{n}\left(1+|\lambda|\right)^n\sum_{l+m=n}\frac{(n!)^2\,l!(n+\epsilon)}{(2l)!\,(n+m)!}\\
		&\le\left|\frac{\lambda}{\mu}\right|^\epsilon|\mu|^{-1}\sum_{n=N+1}^\infty\left|\frac{\lambda}{\mu}\right|^{n}\left(1+|\lambda|\right)^n\sum_{l+m=n}\frac{e^3}{2\sqrt{2}\,\pi}\frac{1}{2^{2l}}\frac{n^{2n+\frac{1}{2}}(n+\epsilon)}{l^l(n+m)^{n+m}}\\
		&\le\frac{e^3}{2\sqrt{2}\,\pi}\left|\frac{\lambda}{\mu}\right|^\epsilon|\mu|^{-1}\sum_{n=N+1}^\infty\left|\frac{\lambda}{\mu}\right|^{n}\left(1+|\lambda|\right)^n\sqrt{n}\,(n+\epsilon)\left(\frac{25}{64}\right)^n\\
		&\le \frac{e^3}{2\sqrt{2}\,\pi}\left|\frac{\lambda}{\mu}\right|^\epsilon|\mu|^{-1}\sqrt{N+1}\,(N+1+\epsilon)\left[\frac{25}{64}\left(\frac{|\lambda|+|\lambda|^2}{|\mu|}\right)\right]^{N+1}\\
		&\hphantom{\le}\quad\cdot\sum_{n'=0}^\infty\left(\frac{|\lambda|+|\lambda|^2}{|\mu|}\right)^{n'}\left[\frac{25}{64}\left(1+\frac{1}{\revise{N+1}+\epsilon}\right)\sqrt{1+\frac{1}{\revise{N+1}}}\,\right]^{n'}\\
		&\le  \frac{e^3}{2\sqrt{2}\,\pi}\left|\frac{\lambda}{\mu}\right|^\epsilon|\mu|^{-1}\frac{\sqrt{N+1}\,(N+1+\epsilon)\left[\frac{25}{64}\left(\frac{|\lambda|+|\lambda|^2}{|\mu|}\right)\right]^{N+1}}{1-\theta_{\revise{N+1}}^\epsilon\left(\frac{|\lambda|+|\lambda|^2}{|\mu|}\right)}=\delta_{11}^\epsilon(N,\lambda,\mu).
	\end{align}
	This completes the proof.
\end{proof}


The final proof concerns the second derivatives of the particular solutions. While slightly more \revise{complicated}, the main procedure for deriving the estimates closely follows the approach used in the preceding two proofs.

\begin{proof}[Proof of lemma \ref{lem:TruncationError3}]
	First, let us consider the estimate \eqref{eq:delta12}. It follows that
	\begin{align}
		\left|\sum_{l+m\ge N+1}l(l-1)a_{l,m}\lambda^{l-2} \mu^m\right|
		&\le \sum_{n=N+1}^{\infty}\left(\sum_{l+m=n}l(l-1)\frac{(2l+4m)!}{(l+m)!\,l!\left(m!\right)^3}|\lambda|^{l-2} |\mu|^m\right)\\
		&= \sum_{n=N+1}^{\infty}\left(\sum_{l+m=n\atop l\ge2}\frac{(2l+4m)!}{(l+m-2)!(l+m)!\left(m!\right)^2}\frac{(l+m-2)!}{(l-2)!\,m!}|\lambda|^{l-2} |\mu|^m\right)\\
		&\le \sum_{n=N+1}^{\infty}\frac{1}{\left(n-2\right)!n!}\max_{0\le m\le n}\left(\frac{(2n+2m)!}{\left(m!\right)^2}\right)\left(|\lambda| + |\mu|\right)^{n-2}\\
		&= \sum_{n=N+1}^{\infty}\frac{(4n)!}{(n-2)!\left(n!\right)^3}\left(|\lambda| + |\mu|\right)^{n-2}\\
		&\le\frac{(4(N+1))!}{(N-1)!\left((N+1)!\right)^3}\left(|\lambda| + |\mu|\right)^{N-1}\sum_{n'=0}^{\infty}\left[\iota_{\revise{N+1}}(|\lambda|+|\mu|)\right]^{n'}\\
		&\le\frac{(4(N+1))!}{(N-1)!\left((N+1)!\right)^3}\frac{(|\lambda|+|\mu|)^{N-1}}{1-\iota_{\revise{N+1}}(|\lambda|+|\mu|)}=\delta_{12}(N,\lambda,\mu),
	\end{align}
	where we also used the fact that, \revise{for $n=N+1, N+2, \dots$,} the infinite sum is bounded by a geometric series with a common ratio of
	\begin{align*}
		\frac{(4n+4)!}{(n-1)!\left((n+1)!\right)^3}\cdot\frac{(n-2)!\left(n!\right)^3}{(4n)!}&=\frac{(4n+4)(4n+3)(4n+2)(4n+1)}{(n-1)(n+1)^3}\\
		&=4\left(4-\frac{1}{n+1}\right)\left(4-\frac{2}{n+1}\right)\left(4+\frac{5}{n-1}\right)\\
		&\le 64\left(4+\frac5{\revise{N}}\right)=\iota_{\revise{N+1}}.
	\end{align*}
	It also follows that
	\begin{align}
		\left|\sum_{l+m\ge N+1}lma_{l,m}\lambda^{l-1} \mu^{m-1}\right|
		&\le \sum_{n=N+1}^{\infty}\left(\sum_{l+m=n}lm\frac{(2l+4m)!}{(l+m)!\,l!\left(m!\right)^3}|\lambda|^{l-1} |\mu|^{m-1}\right)\\
		&= \sum_{n=N+1}^{\infty}\left(\sum_{l+m=n\atop l\ge1,~m\ge1}\frac{(2l+4m)!}{(l+m-2)!(l+m)!\left(m!\right)^2}\frac{(l+m-2)!}{(l-1)!\,(m-1)!}|\lambda|^{l-1} |\mu|^{m-1}\right)\\
		&\le \sum_{n=N+1}^{\infty}\frac{1}{\left(n-2\right)!n!}\max_{0\le m\le n}\left(\frac{(2n+2m)!}{\left(m!\right)^2}\right)\left(|\lambda| + |\mu|\right)^{n-2}\\
		&\le\frac{(4(N+1))!}{(N-1)!\left((N+1)!\right)^3}\frac{(|\lambda|+|\mu|)^{N-1}}{1-\iota_{\revise{N+1}}(|\lambda|+|\mu|)}=\delta_{12}(N,\lambda,\mu).
	\end{align}
	Additionally, we have
	\begin{align}
		\left|\sum_{l+m\ge N+1}m(m-1)a_{l,m}\lambda^{l} \mu^{m-2}\right|
		&\le \sum_{n=N+1}^{\infty}\left(\sum_{l+m=n}m(m-1)\frac{(2l+4m)!}{(l+m)!\,l!\left(m!\right)^3}|\lambda|^{l} |\mu|^{m-2}\right)\\
		&= \sum_{n=N+1}^{\infty}\left(\sum_{l+m=n\atop m\ge 2}\frac{(2l+4m)!}{(l+m-2)!(l+m)!\left(m!\right)^2}\frac{(l+m-2)!}{l!\,(m-2)!}|\lambda|^{l} |\mu|^{m-2}\right)\\
		&\le \sum_{n=N+1}^{\infty}\frac{1}{\left(n-2\right)!n!}\max_{0\le m\le n}\left(\frac{(2n+2m)!}{\left(m!\right)^2}\right)\left(|\lambda| + |\mu|\right)^{n-2}\\
		&\le \frac{(4(N+1))!}{(N-1)!\left((N+1)!\right)^3}\frac{(|\lambda|+|\mu|)^{N-1}}{1-\iota_{\revise{N+1}}(|\lambda|+|\mu|)}=\delta_{12}(N,\lambda,\mu).
	\end{align}
	These \revise{provide} the estimate \eqref{eq:delta12}. 
	
	Second, we consider the $\delta_{13}$ bound in the estimate \eqref{eq:delta13}. \revise{A similar procedure} to obtain \eqref{eq:estimate_delta2} in the proof of Lemma \ref{lem:TruncationError1} yields
	\begin{align}
		\left|\sum_{l+m\ge N+1}l(l-1)a_{l,m}b_{l,m}\lambda^{l-2} \mu^m\right|
		&\le \sum_{n=N+1}^{\infty}\left(\sum_{l+m=n}l(l-1)\frac{(2l+4m)!}{(l+m)!\,l!\left(m!\right)^3}b_{l,m}|\lambda|^{l-2} |\mu|^m\right)\\
		&\le \sum_{n=N+1}^{\infty}\frac{(4n)!}{(n-2)!\left(n!\right)^3}\left(4\log 4+3+3\log n\right)\left(|\lambda| + |\mu|\right)^{n-2}\\
		&\le\frac{(4(N+1))!}{(N-1)!\left((N+1)!\right)^3}\frac{\left(4\log 4 + 3 + 3\log (N+1)\right)(|\lambda|+|\mu|)^{N-1}}{1-\beta_{\revise{N+1}}\iota_{\revise{N+1}}(|\lambda|+|\mu|)}=\delta_{13}(N,\lambda,\mu),
	\end{align}
	where $\beta_N$ is that defined in \eqref{eq:betan} with $n=N$.
	It also follows that
	\begin{align}
		\left|\sum_{l+m\ge N+1}lma_{l,m}b_{l,m}\lambda^{l-1} \mu^{m-1}\right|
		&\le \sum_{n=N+1}^{\infty}\left(\sum_{l+m=n}lm\frac{(2l+4m)!}{(l+m)!\,l!\left(m!\right)^3}b_{l,m}|\lambda|^{l-1} |\mu|^{m-1}\right)\\
		&\le \sum_{n=N+1}^{\infty}\frac{(4n)!}{(n-2)!\left(n!\right)^3}\left(4\log 4+3+3\log n\right)\left(|\lambda| + |\mu|\right)^{n-2}\\
		&\le \frac{(4(N+1))!}{(N-1)!\left((N+1)!\right)^3}\frac{\left(4\log 4 + 3 + 3\log (N+1)\right)(|\lambda|+|\mu|)^{N-1}}{1-\beta_{\revise{N+1}}\iota_{\revise{N+1}}(|\lambda|+|\mu|)}=\delta_{13}(N,\lambda,\mu).
	\end{align}
	\begin{align}
		\left|\sum_{l+m\ge N+1}m(m-1)a_{l,m}b_{l,m}\lambda^{l} \mu^{m-2}\right|
		&\le \sum_{n=N+1}^{\infty}\left(\sum_{l+m=n}m(m-1)\frac{(2l+4m)!}{(l+m)!\,l!\left(m!\right)^3}b_{l,m}|\lambda|^{l} |\mu|^{m-2}\right)\\
		&\le \sum_{n=N+1}^{\infty}\frac{(4n)!}{(n-2)!\left(n!\right)^3}\left(4\log 4+3+3\log n\right)\left(|\lambda| + |\mu|\right)^{n-2}\\
		&\le \frac{(4(N+1))!}{(N-1)!\left((N+1)!\right)^3}\frac{\left(4\log 4 + 3 + 3\log (N+1)\right)(|\lambda|+|\mu|)^{N-1}}{1-\beta_{\revise{N+1}}\iota_{\revise{N+1}}(|\lambda|+|\mu|)}=\delta_{13}(N,\lambda,\mu).
	\end{align}

	Third, the $\delta_{14}$ bound in the estimate \eqref{eq:delta14} is derived using an argument analogous to that employed for obtaining \eqref{eq:estimate_delta3}.
	\begin{align}
		\left|\sum_{l+m\ge N+1}l(l-1)a_{l,m}b_{l,m}^2\lambda^{l-2} \mu^m\right|
		&\le \sum_{n=N+1}^{\infty}\frac{(4n)!}{(n-2)!\left(n!\right)^3}\left(4\log 4+3+3\log n\right)^2\left(|\lambda| + |\mu|\right)^{n-2}\\
		&\le \frac{(4(N+1))!}{(N-1)!\left((N+1)!\right)^3}\frac{\left(4\log 4 + 3 + 3\log (N+1)\right)^2(|\lambda|+|\mu|)^{N-1}}{1-\beta_{\revise{N+1}}^2\iota_{\revise{N+1}}(|\lambda|+|\mu|)}=\delta_{14}(N,\lambda,\mu).
	\end{align}
	\begin{align}
		\left|\sum_{l+m\ge N+1}lma_{l,m}b_{l,m}^2\lambda^{l-1} \mu^{m-1}\right|
		&\le \sum_{n=N+1}^{\infty}\frac{(4n)!}{(n-2)!\left(n!\right)^3}\left(4\log 4+3+3\log n\right)^2\left(|\lambda| + |\mu|\right)^{n-2}\\
		&\le \frac{(4(N+1))!}{(N-1)!\left((N+1)!\right)^3}\frac{\left(4\log 4 + 3 + 3\log (N+1)\right)^2(|\lambda|+|\mu|)^{N-1}}{1-\beta_{\revise{N+1}}^2\iota_{\revise{N+1}}(|\lambda|+|\mu|)}=\delta_{14}(N,\lambda,\mu).
	\end{align}
	\begin{align}
		\left|\sum_{l+m\ge N+1}m(m-1)a_{l,m}b_{l,m}^2\lambda^{l} \mu^{m-2}\right|
		&\le \sum_{n=N+1}^{\infty}\frac{(4n)!}{(n-2)!\left(n!\right)^3}\left(4\log 4+3+3\log n\right)^2\left(|\lambda| + |\mu|\right)^{n-2}\\
		&\le \frac{(4(N+1))!}{(N-1)!\left((N+1)!\right)^3}\frac{\left(4\log 4 + 3 + 3\log (N+1)\right)^2(|\lambda|+|\mu|)^{N-1}}{1-\beta_{\revise{N+1}}^2\iota_{\revise{N+1}}(|\lambda|+|\mu|)}=\delta_{14}(N,\lambda,\mu).
	\end{align}

	Fourth, we consider the estimate \eqref{eq:delta15} related to the $\delta_{15}$ bound. We use \revise{the same argument that was used to derive} \eqref{eq:estimate_delta4} in the proof of Lemma \ref{lem:TruncationError1}. That is,
	\begin{align}
		\left|\sum_{l+m\ge N+1}l(l-1)a_{l,m}c_{l,m}\lambda^{l-2} \mu^m\right|
		&\le\frac83\pi^2\sum_{n=N+1}^{\infty}\frac{(4n)!}{(n-2)!\left(n!\right)^3}\left(|\lambda| + |\mu|\right)^{n-2}\\
		&\le\frac83\pi^2\frac{(4(N+1))!}{(N-1)!\left((N+1)!\right)^3}\frac{(|\lambda|+|\mu|)^{N-1}}{1-\iota_{\revise{N+1}}(|\lambda|+|\mu|)}=\delta_{15}(N,\lambda,\mu).
	\end{align}
	\begin{align}
		\left|\sum_{l+m\ge N+1}lma_{l,m}c_{l,m}\lambda^{l-1} \mu^{m-1}\right|
		&\le \frac83\pi^2\sum_{n=N+1}^{\infty}\frac{(4n)!}{(n-2)!\left(n!\right)^3}\left(|\lambda| + |\mu|\right)^{n-2}\\
		&\le \frac83\pi^2\frac{(4(N+1))!}{(N-1)!\left((N+1)!\right)^3}\frac{(|\lambda|+|\mu|)^{N-1}}{1-\iota_{\revise{N+1}}(|\lambda|+|\mu|)}=\delta_{15}(N,\lambda,\mu).
	\end{align}
	\begin{align}
		\left|\sum_{l+m\ge N+1}m(m-1)a_{l,m}c_{l,m}\lambda^{l} \mu^{m-2}\right|
		&\le \frac83\pi^2\sum_{n=N+1}^{\infty}\frac{(4n)!}{(n-2)!\left(n!\right)^3}\left(|\lambda| + |\mu|\right)^{n-2}\\
		&\le \frac83\pi^2\frac{(4(N+1))!}{(N-1)!\left((N+1)!\right)^3}\frac{(|\lambda|+|\mu|)^{N-1}}{1-\iota_{\revise{N+1}}(|\lambda|+|\mu|)}=\delta_{15}(N,\lambda,\mu).
	\end{align}

	Fifth, let us consider the estimate \eqref{eq:delta16}. For $\epsilon\in\{1/2,1\}$, we also follow the same argument used in \eqref{eq:(v)_1}, \eqref{eq:(v)_2}, and \eqref{eq:estimate_delta5}. We have
	\begin{align}
		&\left|\sum_{l+m\ge N+1}d_{l+\epsilon,m}(l+2m+\epsilon)(l+2m+\epsilon-1)\frac{\lambda^{l+2m+\epsilon-2}}{\mu^{l+m+\epsilon}}\right|\nonumber\\
		&= \left|\sum_{n=N+1}^\infty\left(\sum_{l+m=n}d_{l+\epsilon,m}(l+2m+\epsilon)(l+2m+\epsilon-1)\frac{\lambda^{l+2m+\epsilon-2}}{\mu^{l+m+\epsilon}}\right)\right|\nonumber\\
		&\le\left|\frac{\lambda}{\mu}\right|^\epsilon|\lambda|^{-2}\left|\sum_{n=N+1}^\infty\left(\sum_{l+m=n}d_{l+\epsilon,m}(l+2m+\epsilon)(l+2m+\epsilon-1)\left(\frac{\lambda}{\mu}\right)^{l+m}\lambda^m\right)\right|\nonumber\\
		&\le\left|\frac{\lambda}{\mu}\right|^\epsilon|\lambda|^{-2}\sum_{n=N+1}^\infty\left|\frac{\lambda}{\mu}\right|^{n}\left(1+|\lambda|\right)^n\sum_{l+m=n}\frac{(n!)^2\,l!(n+m+\epsilon)(n+m+\epsilon-1)}{(2l)!\,(n+m)!}\\
		&\le\left|\frac{\lambda}{\mu}\right|^\epsilon|\lambda|^{-2}\sum_{n=N+1}^\infty\left|\frac{\lambda}{\mu}\right|^{n}\left(1+|\lambda|\right)^n\sum_{l+m=n}\frac{e^3}{2\sqrt{2}\,\pi}\frac{1}{2^{2l}}\frac{n^{2n+\frac{1}{2}}(n+m+\epsilon)(n+m+\epsilon-1)}{l^l(n+m)^{n+m}}\\
		&\le\frac{e^3}{2\sqrt{2}\,\pi}\left|\frac{\lambda}{\mu}\right|^\epsilon|\lambda|^{-2}\sum_{n=N+1}^\infty\left|\frac{\lambda}{\mu}\right|^{n}\left(1+|\lambda|\right)^n\sqrt{n}\,(2n+\epsilon)(2n+\epsilon-1)\left(\frac{25}{64}\right)^n\\
		&\le \frac{e^3}{2\sqrt{2}\,\pi}\left|\frac{\lambda}{\mu}\right|^\epsilon|\lambda|^{-2}\sqrt{N+1}\,\left(2(N+1)+\epsilon\right)\left(2(N+1)+\epsilon-1\right)\left[\frac{25}{64}\left(\frac{|\lambda|+|\lambda|^2}{|\mu|}\right)\right]^{N+1}\\
		&\hphantom{\le}\quad\cdot\sum_{n'=0}^\infty\left(\frac{|\lambda|+|\lambda|^2}{|\mu|}\right)^{n'}\left[\frac{25}{64}\left(1+\frac{2}{2(\revise{N+1})+\epsilon}\right)\left(1+\frac{2}{2(\revise{N+1})+\epsilon-1}\right)\sqrt{1+\frac{1}{\revise{N+1}}}\,\right]^{n'}\\
		&\le  \frac{e^3}{2\sqrt{2}\,\pi}\left|\frac{\lambda}{\mu}\right|^\epsilon|\lambda|^{-2}\frac{\sqrt{N+1}\,\left(2(N+1)+\epsilon\right)\left(2(N+1)+\epsilon-1\right)\left[\frac{25}{64}\left(\frac{|\lambda|+|\lambda|^2}{|\mu|}\right)\right]^{N+1}}{1-\nu_{\revise{N+1}}^\epsilon\left(\frac{|\lambda|+|\lambda|^2}{|\mu|}\right)}=\delta_{16}^\epsilon(N,\lambda,\mu),
	\end{align}
	where $\nu_N^\epsilon$ is defined in \eqref{eq:nu_N}.
	
	Similarly, we have the estimate \eqref{eq:delta17} as follows.
	\begin{align}
		&\left|\sum_{l+m\ge N+1}d_{l+\epsilon,m}(l+2m+\epsilon)(l+m+\epsilon)\frac{\lambda^{l+2m+\epsilon-1}}{\mu^{l+m+\epsilon+1}}\right|\nonumber\\
		&\le\left|\frac{\lambda}{\mu}\right|^\epsilon|\lambda\mu|^{-1}\left|\sum_{n=N+1}^\infty\left(\sum_{l+m=n}d_{l+\epsilon,m}(l+2m+\epsilon)(l+m+\epsilon)\left(\frac{\lambda}{\mu}\right)^{l+m}\lambda^m\right)\right|\nonumber\\
		&\le\frac{e^3}{2\sqrt{2}\,\pi}\left|\frac{\lambda}{\mu}\right|^\epsilon|\lambda\mu|^{-1}\sum_{n=N+1}^\infty\left|\frac{\lambda}{\mu}\right|^{n}\left(1+|\lambda|\right)^n\sqrt{n}\,(2n+\epsilon)(n+\epsilon)\left(\frac{25}{64}\right)^n\\
		&\le \frac{e^3}{2\sqrt{2}\,\pi}\left|\frac{\lambda}{\mu}\right|^\epsilon|\lambda\mu|^{-1}\sqrt{N+1}\,(2(N+1)+\epsilon)(N+1+\epsilon)\left[\frac{25}{64}\left(\frac{|\lambda|+|\lambda|^2}{|\mu|}\right)\right]^{N+1}\\
		&\hphantom{\le}\quad\cdot\sum_{n'=0}^\infty\left(\frac{|\lambda|+|\lambda|^2}{|\mu|}\right)^{n'}\left[\frac{25}{64}\left(1+\frac{2}{2(\revise{N+1})+\epsilon}\right)\left(1+\frac{1}{\revise{N+1}+\epsilon}\right)\sqrt{1+\frac{1}{\revise{N+1}}}\,\right]^{n'}\\
		&\le  \frac{e^3}{2\sqrt{2}\,\pi}\left|\frac{\lambda}{\mu}\right|^\epsilon|\lambda\mu|^{-1}\frac{\sqrt{N+1}\,(2(N+1)+\epsilon)(N+1+\epsilon)\left[\frac{25}{64}\left(\frac{|\lambda|+|\lambda|^2}{|\mu|}\right)\right]^{N+1}}{1-\xi_{\revise{N+1}}^\epsilon\left(\frac{|\lambda|+|\lambda|^2}{|\mu|}\right)}=\delta_{17}^\epsilon(N,\lambda,\mu),
	\end{align}
	where $\xi_N^\epsilon$ is defined in \eqref{eq:xi_N_sigma_N}.

	Finally, the estimate \eqref{eq:delta18} is obtained by
	\begin{align}
		&\left|\sum_{l+m\ge N+1}d_{l+\epsilon,m}(l+m+\epsilon)(l+m+\epsilon+1)\frac{\lambda^{l+2m+\epsilon}}{\mu^{l+m+\epsilon+2}}\right|\nonumber\\
		&\le\left|\frac{\lambda}{\mu}\right|^\epsilon|\mu|^{-2}\left|\sum_{n=N+1}^\infty\left(\sum_{l+m=n}d_{l+\epsilon,m}(l+m+\epsilon)(l+m+\epsilon+1)\left(\frac{\lambda}{\mu}\right)^{l+m}\lambda^m\right)\right|\nonumber\\
		&\le\frac{e^3}{2\sqrt{2}\,\pi}\left|\frac{\lambda}{\mu}\right|^\epsilon|\mu|^{-2}\sum_{n=N+1}^\infty\left|\frac{\lambda}{\mu}\right|^{n}\left(1+|\lambda|\right)^n\sqrt{n}\,(n+\epsilon)(n+\epsilon+1)\left(\frac{25}{64}\right)^n\\
		&\le \frac{e^3}{2\sqrt{2}\,\pi}\left|\frac{\lambda}{\mu}\right|^\epsilon|\mu|^{-2}\sqrt{N+1}\,(N+1+\epsilon)(N+2+\epsilon)\left[\frac{25}{64}\left(\frac{|\lambda|+|\lambda|^2}{|\mu|}\right)\right]^{N+1}\\
		&\hphantom{\le}\quad\cdot\sum_{n'=0}^\infty\left(\frac{|\lambda|+|\lambda|^2}{|\mu|}\right)^{n'}\left[\frac{25}{64}\left(1+\frac{1}{\revise{N+1}+\epsilon}\right)\left(1+\frac{1}{\revise{N+1}+\epsilon+1}\right)\sqrt{1+\frac{1}{\revise{N+1}}}\,\right]^{n'}\\
		&\le  \frac{e^3}{2\sqrt{2}\,\pi}\left|\frac{\lambda}{\mu}\right|^\epsilon|\mu|^{-2}\frac{\sqrt{N+1}\,(N+1+\epsilon)(N+2+\epsilon)\left[\frac{25}{64}\left(\frac{|\lambda|+|\lambda|^2}{|\mu|}\right)\right]^{N+1}}{1-\sigma_{\revise{N+1}}^\epsilon\left(\frac{|\lambda|+|\lambda|^2}{|\mu|}\right)}=\delta_{18}^\epsilon(N,\lambda,\mu),
	\end{align}
	where $\sigma_N^\epsilon$ is defined in \eqref{eq:xi_N_sigma_N}.
\end{proof}

\paragraph{Acknowledgement}
The authors would like to express their gratitude to AT's former student, Naoya Inoue, for his contributions during the initial stages of this work. AT is partially supported by the Top Runners in Strategy of Transborder Advanced Researches (TRiSTAR) program conducted as the Strategic Professional Development Program for Young Researchers by the MEXT, and by JSPS KAKENHI Grant Numbers JP22K03411, JP23K20813, and JP24K00538.

\paragraph{Declaration of generative AI and AI-assisted technologies in the writing process}
The authors used ChatGPT during the preparation of this paper to refine the English phrasing and improve the clarity of the content. After its use, the authors thoroughly reviewed and revised the text to ensure accuracy and take full responsibility for the final content of the publication.

\bibliographystyle{unsrt}
\bibliography{papers}
\end{document}